\documentclass[12pt]{amsart}

\usepackage{amsmath, amssymb, amsthm}
\usepackage[T1]{fontenc}
\usepackage{comment}

\numberwithin{equation}{section}

\usepackage[margin=1in]{geometry}
\usepackage{subcaption}
\usepackage{graphicx}
\graphicspath{{figures/}}

\usepackage{enumitem}

\usepackage{hyperref}
	\hypersetup{colorlinks, breaklinks,
            	linkcolor = blue,
				urlcolor = blue,
				anchorcolor = blue,
				citecolor = blue}

\usepackage{cleveref}

\usepackage{mathtools}

\usepackage{multirow}

\DeclareMathOperator{\dist}{dist}

\DeclareMathOperator{\proj}{proj}

\let\Cap\relax
\DeclareMathOperator{\Cap}{Cap}
\DeclareMathOperator{\spt}{spt}

\newcommand{\Fav}{\operatorname{Fav}}

\newcommand{\C}{\mathbb{C}}

\newcommand{\E}{\mathbb{E}}

\newcommand{\N}{\mathbb{N}}
\renewcommand{\P}{\mathbb{P}}
\newcommand{\R}{\mathbb{R}}

\newcommand{\ii}{{\bf i}}
\newcommand{\jj}{{\bf j}}

\newcommand{\cC}{\mathcal{C}}
\newcommand{\cD}{\mathcal{D}}

\newcommand{\cH}{\mathcal{H}}
\newcommand{\cL}{\mathcal{L}}

\newcommand{\cS}{\mathcal{S}}

\usepackage{mathrsfs}

\renewcommand\epsilon{{\varepsilon}}

\newcommand{\wt}{\widetilde}

\usepackage{dsfont}

\usepackage{thmtools}
\declaretheorem[numberwithin=section,name=Theorem]{theorem}
\declaretheorem[sibling=theorem,name=Lemma]{lemma}
\declaretheorem[sibling=theorem,name=Proposition]{proposition}
\declaretheorem[sibling=theorem,name=Corollary]{corollary}

\declaretheorem[sibling=theorem,name=Remark,style=definition]{remark}
\declaretheorem[sibling=theorem,name=Problem,style=definition]{problem}

\newcommand{\discmap}[1]{\Phi_{#1}}

\newcommand{\fjump}{f_{\vec s}}
\newcommand{\fnojump}{g_{\vec s}}

\usepackage{xcolor}

\definecolor{alanorange}{RGB}{204, 112, 0} 

\title{Favard length of non-homogeneous random disc-like Cantor sets}

\author{Alan Chang, Damian D\k{a}browski, Giacomo Del Nin}

\subjclass[2020]{28A80 (primary) 28A75, 28A12 (secondary)}
\keywords{Favard length, Buffon's needle probability, Cantor set, analytic capacity, Vitushkin's conjecture}

\begin{document}

\begin{abstract}
    The Favard length of a planar set is the average length of its orthogonal projections. We provide sharp estimates for the Favard length decay of a random variant of the non-homogeneous four corner Cantor sets introduced by Garnett in the 1970s. As a corollary, we obtain examples which show that Mattila's classical lower bound on the Favard length in terms of the Riesz capacity is sharp for all possible decay rates. We also obtain a new family of examples of sets with positive analytic capacity and zero Favard length.
\end{abstract}

\maketitle

\setcounter{tocdepth}{1}

\tableofcontents

\section{Introduction}

\subsection{Projections and Favard length}
For any $\theta\in [0,\pi]$ we denote the orthogonal projection $\proj_\theta:\R^2\to \R$ by 
\begin{equation*}
    \proj_\theta(x)=x\cdot(\cos\theta,\sin\theta).
\end{equation*}
Given a Borel set $E\subset\R^2$, its \emph{Favard length} is the average length of its orthogonal projections
\begin{equation*}
    \Fav(E)= \int_0^{\pi} |\proj_\theta(E)|\, d\theta,
\end{equation*}
where $|\cdot|$ stands for the $1$-dimensional Lebesgue measure. Favard length is also commonly called \emph{Buffon's needle probability}.

Recall that a Borel set $E\subset\R^2$ is called \emph{purely unrectifiable} if $\cH^1(E\cap\Gamma)=0$ for every rectifiable curve $\Gamma$. ($\cH^1$ denotes the $1$-dimensional Hausdorff measure, also referred to as ``length.'') By the Besicovitch projection theorem (e.g., \cite[Theorem 18.1]{mattila1995textbook}) every purely unrectifiable set $E$ of finite length satisfies $\Fav(E)=0$. Consequently, assuming $E$ is compact, we get
\begin{equation*}
    \Fav(E(r))\xrightarrow{r\to 0} 0,
\end{equation*}
where $E(r)$ denotes the closed $r$-neighbourhood of $E$, i.e.,
\[
E(r) = \{x \in \R^2 : \dist(x, E) \leq r\}.
\]
The \emph{Favard length problem}, posed in \cite{ps}, asks for estimating the rate of decay of $\Fav(E(r))$. Note that it may depend on the choice of the set $E$, and there are purely unrectifiable sets with arbitrarily slow Favard length decay \cite{wilson2017}.

One of the first general lower bounds on Favard length is due to Mattila. To state the bound, we first recall the Riesz $1$-energy of a measure $\mu$ on $\R^2$, defined by
\begin{align}
I_1(\mu) 
=
\iint \frac{d\mu (x) \, d\mu (y)}{|x-y|}.
\end{align}
Then we define the Riesz $1$-capacity of a compact set $E$ by
\begin{align}
\Cap_1(E) 
= 
\sup\{I_1(\mu)^{-1} : \mu \text{ probability measure supported on $E$}\}
.
\end{align}
Mattila {\cite[Corollary 3.3]{mattila1990}} proved that
\begin{align}\label{eq:mat-low-bd}
\Fav(E)
\gtrsim 
\Cap_1(E)
\qquad\text{for all compact sets $E \subset \R^2$}.
\end{align}
On the other hand, finding general upper bounds on Favard length is much harder. The only results in this direction we are aware of are due to Tao \cite{tao2009quantitative} and the second author \cite{dabrowski2024}. These upper bounds, which depend on quantifying pure unrectifiability, are very far from matching \eqref{eq:mat-low-bd}. At the same time, significantly better estimates have been shown for various self-similar sets.

The prototypical example is the \emph{four corner Cantor set}. Usually, it is described in terms of the corners of a $4 \times 4$ square grid. Here, we give a somewhat non-standard but equivalent description (see \Cref{fig:four corner Cantor discs}):

\begin{enumerate}
\item
Start with $K_0$ as the closed unit disc $B(0,1)$. 
\item Suppose $K_n$ has been constructed as the union of $4^n$ discs of radius $4^{-n}$. We will construct $K_{n+1}$ as follows: For each disc $D \subset K_n$, place four discs inside $D$ of radius $4^{-(n+1)}$, each internally tangent to $\partial D$, with their centers forming the vertices of an axis-aligned square. (Each of these four sub-discs will be referred to as the \emph{children} of $D$.) Let $K_{n+1}$ be the union of these $4^{n+1}$ discs.
\item 
The four corner Cantor set is $K = \bigcap_{n=0}^\infty K_n$.
\end{enumerate}

\begin{figure}[h]
\centering
\includegraphics[page=1,width=0.23\textwidth]{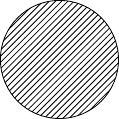}
\hfill
\includegraphics[page=2,width=0.23\textwidth]{standard_four_corner1.pdf}
\hfill
\includegraphics[page=3,width=0.23\textwidth]{standard_four_corner1.pdf}
\hfill
\includegraphics[page=4,width=0.23\textwidth]{standard_four_corner1.pdf}
\caption{Standard four corner Cantor set ($K_0, K_1, K_2, K_3$)}
\label{fig:four corner Cantor discs}
\end{figure}

The Besicovitch projection theorem implies $\Fav(K) = 0$. (Alternative proofs that $\Fav(K) = 0$ can be found in \cite{pss} and \cite[Chapter 10]{mattila2015textbook}.)
Peres and Solomyak  \cite{ps} gave an explicit, very slowly decaying upper bound for $\Fav(K_n)$ in terms of the iterated logarithm function. Then Nazarov, Peres, and Volberg \cite{npv} improved this significantly to a polynomial upper bound: $\Fav(K_n) \lesssim_{\epsilon} n^{-1/6+\epsilon}$ for all $\epsilon > 0$. Very recently, Marshall \cite{Marshall2025} further improved it to $\Fav(K_n) \lesssim_{\epsilon} n^{-1/5+\epsilon}.$ See also \cite{laba2010favard,blv, BondVolberg10, BondVolberg12, LabaMarshall22}
for upper bounds on the Favard length of other self-similar sets.  \cite{LabaMcDonaldTaylor2026} recently proved upper bounds on generalized Favard lengths associated with nonlinear projections in terms of the standard Favard length.

For lower bounds, it is not hard to show that $\Cap_1(K_n) \lesssim n$. (In fact, Bongers \cite{bongers2026} showed $\Cap_1(K_n) \approx n$.) Thus, Mattila's bound \eqref{eq:mat-low-bd} implies $\Fav(K_n) \gtrsim 1/n$. Bateman and Volberg \cite{bv} improved the lower bound to 
\begin{align}
\label{eq:bateman-volberg}
    \Fav(K_n) \gtrsim \frac{\log n}{n}
    .
\end{align} 
As shown in \cite[Appendix A]{chang-shmerkin-suomala}, this $(\log n)/n$ lower bound holds more generally for Cantor-type sets satisfying an ``alignment'' property.
On the other hand, Mattila's lower bound was shown to be sharp for certain grid-based \emph{random} Cantor sets in \cite{ps}, \cite{ps2005} and \cite{chang-shmerkin-suomala}.

For more information on the Favard length problem, see the survey articles \cite{laba2015, taylor2024}.

\subsection{Non-homogeneous disc-like Cantor sets}

We make two generalizations of the construction of the four corner Cantor set given above. 

\begin{enumerate}
    \item $K_0$ is a single disc of radius $r_0$.
    \item Suppose $K_n$ has been constructed as the union of $4^n$ discs of radius $r_n$. We will construct $K_{n+1}$ as follows: For each disc $D \subset K_n$, place four discs inside $D$ of radius $r_{n+1}$, each internally tangent to $\partial D$, with their centers forming the vertices of a square rotated by some angle $\omega_D$. (See \Cref{fig:rotated square}.) Let $K_{n+1}$ be the union of these $4^{n+1}$ discs.
    \item 
    The final set is $K = \bigcap_{n=0}^\infty K_n$.
\end{enumerate} 

Then $K$ is completely determined by the choice of radii $r_0 > r_1 > \cdots$ and the choice of rotation angles $\omega_D$ -- one for each disc at every stage of the construction. (This is stated more precisely in \Cref{section:definition}.)

\begin{figure}[h]
\centering
\includegraphics[page=2,width=0.23\textwidth]{standard_four_corner1.pdf}
\qquad
\includegraphics[page=2,width=0.23\textwidth]{random_discs1.pdf}
\qquad
\includegraphics[page=6,width=0.23\textwidth]{random_discs1.pdf}
\caption{The outer disc represents $D \subset K_n$. The four smaller discs represent the four discs we place inside $D$. The left is the usual axis-aligned square. The other two are rotated by some angle.}
\label{fig:rotated square}
\end{figure}

Here are two natural questions about the Favard length of sets obtained in this manner.

\begin{problem}\label{prob:qual}
    What are the sequences $(r_n)_{n\ge 0}$ and rotations $(\omega_D)$ such that the corresponding Cantor set $K$ satisfies $\Fav(K)>0$? 
\end{problem}

It is natural to ask for a more quantitative version of Problem \ref{prob:qual}.
\begin{problem}\label{prob:quant}
    Prove estimates for $\Fav(K_n)$ in terms of $(r_k)_{k=0}^n$ and $(\omega_D)$.
\end{problem}

In the special case where all the rotation angles $\omega_D$ are equal to 0 and $r_{k+1}<r_k/2$, \Cref{prob:qual} is a well-known question that already appeared, e.g., in \cite[Problem 7]{mattila2004hausdorff} and \cite[\textsection 6.6]{volberg-eiderman}. In this situation, the sets $K$ are the well-known non-homogeneous Cantor sets introduced in the 70s by Garnett \cite{garnett1974analytic}. The standard construction uses squares of variable sidelengths instead of discs, but both constructions are equivalent. Motivated by Vitushkin's conjecture (see \Cref{subsec:Vit}), it would be very desirable to characterize sequences $(r_n)_{n\ge 0}$ for which $\Fav(K)>0.$ 

It is not difficult to show that $\cH^1(K)<\infty$ if and only if $\sup_{k\ge 0} 4^kr_k<\infty.$ Consequently, for such sequences the Besicovitch projection theorem ensures $\Fav(K)=0$. It was shown in \cite{ps} that this is still the case for sequences such that $4^kr_k$ converges to infinity at an explicit, but very slow rate.

In the other direction, it follows from Mattila's bound \eqref{eq:mat-low-bd} and the estimates for $\Cap_1(K)$ (see Lemma \ref{lem:cap-lower-bound}) that
\begin{equation}\label{eq:low-nonhom}
    \Fav(K_n)\gtrsim \left(\sum_{j=0}^n \frac{1}{4^j r_j}\right)^{-1},
\end{equation}
so certainly $\Fav(K)>0$ provided that $\sum_{j=0}^\infty 4^{-j} r_j^{-1}<\infty$. For \Cref{prob:quant}, we are not aware of any estimates on $\Fav(K_n)$ other than the lower bound \eqref{eq:low-nonhom}. 

\subsection{Random Cantor sets}

In this article we solve a random variant of Problems \ref{prob:qual} and \ref{prob:quant}. We make the rotations $\omega_D$ random, so that $K$ is a random set. We consider two kinds of random models of this kind, which we refer to as \emph{synchronized} and \emph{independent}:

\begin{itemize}
    \item \textbf{Synchronized rotations model}: Let $\omega_0, \omega_1, \omega_2, \ldots$ be IID uniform random variables in $[0,2\pi]$. We apply the same rotation $\omega_n$ to every disc $D \subset K_n$ in the $n$th generation. See \Cref{fig:synchronized}.
    \item \textbf{Independent rotations model}: The random rotation is uniform and independent across \emph{all} discs. See \Cref{fig:independent}.
\end{itemize}

\begin{figure}[h]
\centering
\includegraphics[page=5,width=0.23\textwidth]{random_discs1.pdf}
\hfill
\includegraphics[page=6,width=0.23\textwidth]{random_discs1.pdf}
\hfill
\includegraphics[page=7,width=0.23\textwidth]{random_discs1.pdf}
\hfill
\includegraphics[page=8,width=0.23\textwidth]{random_discs1.pdf}
\caption{Synchronized rotations.}
\label{fig:synchronized}
\end{figure}

\begin{figure}[h]
\centering
\includegraphics[page=1,width=0.23\textwidth]{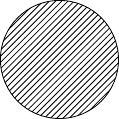}
\hfill
\includegraphics[page=2,width=0.23\textwidth]{random_discs1.pdf}
\hfill
\includegraphics[page=3,width=0.23\textwidth]{random_discs1.pdf}
\hfill
\includegraphics[page=4,width=0.23\textwidth]{random_discs1.pdf}
\caption{Independent rotations.}
\label{fig:independent}
\end{figure}

In the homogenous case $r_n = 4^{-n}$, the lower bound \eqref{eq:low-nonhom} implies that for any choice of rotations (no randomness required), the Cantor sets obtained this way satisfy
\[
\Fav(K_n) \gtrsim \frac{1}{n}.
\]
For $r_n = 4^{-n}$, Zhang \cite{zhang2020} studied the synchronized rotations model and Vardakis--Volberg \cite{vardakis-volberg} studied the independent rotations model, and they showed that Mattila's lower bound is sharp in expectation. That is, for both models, we have
\[
\E \Fav(K_n) \lesssim \frac{1}{n}.
\]

\subsection{New results}
Our main result solves a random variant of Problem \ref{prob:quant}.

\begin{theorem}[Sharp expected Favard length decay]\label{thm:main}
Fix a sequence $r_0 \geq r_1 \geq r_2 \geq \cdots$ satisfying $r_{n+1} \leq \frac{1}{3} r_n$ for all $n$. Then for both the synchronized and independent models, we have:
\begin{align}
\label{eq:E Fav Kn main thm}    
    \E[\Fav(K_n)]
    &\approx
    \left(
    \sum_{j=0}^n \frac{1}{4^j r_j}
    \right)^{-1}
    &&\text{for all $n$.}
\\
\label{eq:E Fav K(r) main thm}
    \E[\Fav(K(r))]
    &\approx
    \left(
    \sum_{j=0}^{n-1} \frac{1}{4^j r_j}
    +
    \frac{1}{4^n r}
    \right)^{-1}
    &&\text{for all $n$ and $r \in [r_n, r_{n-1}]$}.
\end{align}
\end{theorem}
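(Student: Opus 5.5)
The plan is to prove the lower and upper bounds separately, the lower bound deterministically. For any choice of rotations, $\Fav(K_n) \gtrsim \Cap_1(K_n)$ by Mattila's bound \eqref{eq:mat-low-bd}. Let $\mu$ be the natural probability measure on $K_n$ giving mass $4^{-n}$ uniformly to each disc of generation $n$. Since $r_{j+1}\le r_j/3$, sibling discs of generation $j+1$ are separated by $\gtrsim r_j$. So for $x$ in a generation-$n$ disc, $\int |x-y|^{-1}\,d\mu(y) \lesssim \sum_{j\le n} 4^{-j} r_j^{-1}$. This is Lemma \ref{lem:cap-lower-bound} and gives \eqref{eq:low-nonhom}.

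For $K(r)$ with $r\in[r_n,r_{n-1}]$, I would use the same measure but spread the last level over discs of radius $\approx r$ around the generation-$n$ discs. The top scale then contributes $4^{-n}r^{-1}$ in place of $4^{-n}r_n^{-1}$. Taking expectations of a deterministic bound gives the lower halves of \eqref{eq:E Fav Kn main thm} and \eqref{eq:E Fav K(r) main thm}.

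For the upper bound, fix a direction $\theta$ and write $f=\sum_{D}\mathbf 1_{\proj_\theta(D)}$, summing over generation-$n$ discs (or the $r$-discs). Cauchy--Schwarz gives $|\proj_\theta(K_n)| \ge \|f\|_1^2/\|f\|_2^2$, but we need the reverse inequality. The standard route, following Nazarov--Peres--Volberg and Vardakis--Volberg, is to exploit that $\|f\|_1 \approx 4^n r_n$ is deterministic. Then $|\proj_\theta K_n|$ is small whenever the projected discs pile up with high multiplicity on most of the projection.

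Concretely, I would show that for each point $t$ of the projection, the expected number of generation-$j$ ancestors' children overlapping at $t$ accumulates. At each scale $j$, conditional on the past, the random rotation $\omega$ makes the four projected child intervals (length $2r_{j+1}$, lying inside an interval of length $2r_j$) overlap with a definite probability of order $4r_{j+1}/r_j\cdot(\ldots)$. The useful quantity is the stopping-time/Markov structure of the multiplicity $N_j(t)=\#\{D\in K_j : t\in \proj_\theta D\}$, normalized as $M_j(t)=N_j(t)\cdot 2r_j$. This $M_j$ is a martingale-like quantity in $j$ because each child interval has length $r_{j+1}$ and there are four children.

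The key estimate I aim for is $\P(t\in\proj_\theta K_n)\lesssim \big(\sum_{j\le n}(4^jr_j)^{-1}\big)^{-1}\cdot(\text{normalization})$ for $t$ in the initial interval. Integrating over $t$ and $\theta$ then gives the result. This is analogous to survival probabilities of a critical branching process with offspring variance $\sigma_j^2$ at generation $j$, where survival to time $n$ behaves like $(\sum_j \sigma_j^2/m_j)^{-1}$. Here the normalized variance at step $j$ is of order $1/(4^jr_j)$, because $4^j r_j$ is the normalized mass and extinction pressure at scale $j$ scales inversely with it. For the synchronized model, dependence between branches must be handled. I would condition on $\omega_0,\dots,\omega_{j-1}$ and use only second-moment (Paley--Zygmund type, reversed) bounds for the survival event, as in Zhang's argument.

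The main obstacle is the upper bound. Specifically, it requires showing that the per-scale variance of the projected multiplicity really is of order $1/(4^j r_j)$ uniformly in $\theta$, and that these contributions add across scales despite the nonhomogeneous radii. I would first try to derive a recursion of the form $p_{j+1}\le p_j - c\,p_j^2/(4^{j}r_j)$ for the survival probability $p_j$ of a point, averaged over positions. Solving this recursion gives $p_n\lesssim(\sum_j (4^jr_j)^{-1})^{-1}$. For $K(r)$, the final step is replaced by the thickening at radius $r$, which contributes the term $(4^nr)^{-1}$.
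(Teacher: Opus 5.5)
Your lower bound is the paper's argument (Mattila's inequality plus a potential bound for the natural measure). One detail for $K(r)$: the enlarged discs must actually lie in $K(r)$ and stay separated, e.g.\ radius $\min(r/2,r_{n-1}/3)$ for $r\in[2r_n,2r_{n-1}]$. A disc concentric with $D_\ii$ is only guaranteed to lie in $K(r)$ if its radius is at most $r-r_n$.

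The genuine gap is the upper bound. You state the recursion $p_{j+1}\le p_j-c\,p_j^2/(4^jr_j)$ as a goal, but nothing in the proposal produces it, and the top-down route you describe does not close.
\begin{itemize}
\item Going from $K_j$ to $K_{j+1}$, the chance that $t$ is lost depends strongly on the multiplicity $N_j(t)$. So a lower bound of the form $p_j^2\theta_j$, $\theta_j=(4^jr_j)^{-1}$, on the expected loss needs control of the law of $N_j$ on $\{N_j\ge1\}$. That is a Kolmogorov-type estimate, and it is the whole difficulty. Your claim that the normalised variance at scale $j$ is $\approx\theta_j$ is likewise the thing to be proved.
\item The process is multi-type. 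The expected number of children of a disc whose projections contain $t$ depends on where $t$ sits in the parent's projected interval. The projected child centre $(1-\rho)\cos\omega$ has arcsine law, so this number is of order $\sqrt\rho$ near the endpoints and of order $\rho$ in the middle ($\rho=r_{j+1}/r_j$). Hence no normalisation of $N_j(t)$ is a martingale pointwise in $t$. Also, $N_j\cdot 2r_j$ is not the right normalisation even on average, since $\int N_j=4^j\cdot 2r_j$.
\item In the synchronized model all discs of a generation share one rotation, so there is no branching independence. Paley--Zygmund bounds $\P(X>0)$ from below, not survival probabilities from above.
\end{itemize}

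The missing idea is to decompose at the root rather than at the leaves, and to work with lengths rather than pointwise survival. Root decomposition is also how Kolmogorov's estimate is proved for Galton--Watson processes, via $\P(Z_{n+1}>0)=1-f(1-\P(Z_n>0))$ with $f$ the offspring generating function. Because the radii vary, you must fix the last generation $n$ and vary the starting level. The paper builds the set in reverse: $\cS_\ii$ is the rescaled set of generation-$n$ descendants of $\ii$, and $a_k=\E|\proj_\theta\cup\cS_\ii|$ for $|\ii|=k$. Then $\proj_\theta\cup\cS_\ii=\bigcup_jA_j$ with $A_j=\proj_\theta\Phi_{P_{\ii j}}(E_j)$, $E_j=\cup\cS_{\ii j}$, and $|\bigcup_jA_j|\le\sum_j|A_j|-|A_0\cap A_1|$. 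The key input is a deterministic change-of-variables lemma: for arbitrary $E_0,E_1\subset B(0,1)$,
\[
\int_0^{2\pi}\bigl|\proj_\theta\Phi_{P(\rho,\omega)}(E_0)\cap\proj_\theta\Phi_{P(\rho,\omega+\pi/2)}(E_1)\bigr|\,d\omega\ \ge\ \frac{\rho^2}{2(1-\rho)}\,|\proj_\theta E_0|\,|\proj_\theta E_1|.
\]
It is proved by substituting $t=(1-\rho)(\cos(\omega+\pi/2)-\cos\omega)$ and using Fubini, $\int_\R|F_0\cap(F_1+t)|\,dt=|F_0||F_1|$ with $F_i=\rho\,\proj_\theta E_i$. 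It sees only total lengths, which is what removes the multi-type issue.

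Since $\omega_\ii$ is independent of $E_0,E_1$, and $E_0,E_1$ are independent (independent model) or identical (synchronized model, where one uses $\E X^2\ge(\E X)^2$), this yields $a_k\le 4\rho_ka_{k+1}-A(\rho_ka_{k+1})^2$ with $\rho_k=r_{k+1}/r_k$. Comparing with the explicitly solvable recurrence $c_{k+1}=\delta_kc_k-A\delta_kc_kc_{k+1}$ gives $r_0a_0\lesssim(\sum_{j\le n}4^{-j}r_j^{-1})^{-1}$. The bound for $K(r)$ then follows from $K(r)\subset\wt K_n(r)$ with last radius $\min(r,r_{n-1}/3)$. One also uses $|(\proj_\theta\wt K_n)(r)|\lesssim|\proj_\theta\wt K_n|$, which holds because $\proj_\theta\wt K_n$ is a union of intervals of length $\gtrsim r$.
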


\begin{remark}
\label{definition:admissible}
We say that a sequence of radii $r_0 \geq r_1 \geq r_2 \geq \cdots$ is \emph{admissible} if 
\begin{align}
\label{eq:admissible}    
r_{n+1} \leq \frac{1}{3} r_n \qquad\text{for all $n$}.
\end{align}
Throughout this paper, we adopt the convention $r_{-1} = \infty$, so that \Cref{thm:main} also applies for $r \geq r_0$. We also allow the degenerate case where $r_{n-1} > 0 = r_{n} = r_{n+1} = \cdots$ for some $n$. In that case, $K_n$ is a finite set of $4^n$ points, and $K = K_n = K_{n+1} = \cdots$.

The constant $\frac{1}{3}$ in \eqref{eq:admissible}  is arbitrary; we could have also chosen any number strictly between $\frac{1}{4}$ and $\frac{1}{1+\sqrt{2}}$. If $\frac{r_{n+1}}{r_n} \geq \frac{1}{1+\sqrt{2}}$, the discs in $K_{n+1}$ will overlap.
\end{remark}

The lower bounds in \Cref{thm:main} are proved by combining Mattila's lower bound \eqref{eq:mat-low-bd} with the following capacity estimates. Note that this statement does not require any randomness.

\begin{lemma}\label{lem:cap-lower-bound}
For any admissible sequence of radii $(r_n)_{n\ge 0}$ and any choice of rotations, the associated Cantor set $K$ satisfies
\begin{align}
\label{eq:Cap K_n lower bd}  
    \Cap_1(K_n)
    &\gtrsim
    \left(
    \sum_{j=0}^n \frac{1}{4^j r_j}
    \right)^{-1}
    &&\text{for all $n$.}
\\
\label{eq:Cap K(r) lower bd}
    \Cap_1(K(r))
    &\gtrsim
    \left(
    \sum_{j=0}^{n-1} \frac{1}{4^j r_j}
    +
    \frac{1}{4^n r}
    \right)^{-1}
    &&\text{for all $n$ and $r \in [r_n, r_{n-1}]$}.
\end{align}
\end{lemma}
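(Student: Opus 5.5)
We will exhibit an explicit probability measure on $K_n$ (resp.\ $K(r)$) and bound its Riesz $1$-energy from above; by the definition of $\Cap_1$ this gives the lower bound. For \eqref{eq:Cap K_n lower bd}, let $\mu_n$ be normalized area measure on $K_n$: each of the $4^n$ discs of generation $n$ carries mass $4^{-n}$, spread uniformly. Equivalently, $\mu_n$ is the natural measure that gives mass $4^{-k}$ to each generation-$k$ disc for $k\le n$. For \eqref{eq:Cap K(r) lower bd} with $r\in[r_n,r_{n-1}]$, we use instead normalized area measure on the union of the $4^n$ discs of radius $r$ concentric with the generation-$n$ discs. These discs lie inside $K(r)$, since $K$ meets every generation-$n$ disc and the centre of such a disc is within $r_n\le r$ of $K$. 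Alternatively one may use the $r$-neighbourhoods of the generation-$n$ discs; all we need is that these are $4^n$ sets of diameter $\approx r$ with bounded overlap.

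To estimate $I_1(\mu)=\iint |x-y|^{-1}\,d\mu\,d\mu$, we split the pairs $(x,y)$ according to the last generation $j$ at which $x$ and $y$ lie in the same disc $D\subset K_j$. If $x,y$ lie in different children $D_1\neq D_2$ of $D$, the key geometric fact is
\[
|x-y|\gtrsim r_j .
\]
The centres of distinct children are at distance $\ge\sqrt2(r_j-r_{j+1})$, and the children have radius $r_{j+1}\le r_j/3$, so the gap is at least $(\sqrt2(1-\tfrac13)-\tfrac23)r_j=\tfrac{2}{3}(\sqrt2-1)r_j\gtrsim r_j$. This is exactly where admissibility (any ratio below $1/(1+\sqrt2)$) enters, and it holds for every choice of rotations, since rotating the square does not change the distances. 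For the $K(r)$ version with $r\le r_{n-1}$ and $j\le n-1$, the enlargement by $r$ could in principle destroy this separation. We avoid the issue by shrinking: the discs of radius $r$ centred at generation-$n$ centres lie within the generation-$(n-1)$ discs enlarged only by $r-r_n\le r_{n-1}$. In the worst case we replace the constant, observing that separation at level $n-1$ is $\gtrsim r_{n-1}\ge r$. Should that fail, we can use radius $cr$ discs with a small $c$, which changes the capacity only by a constant.

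The contribution of pairs separated at generation $j$ is then at most
\[
\sum_{D\subset K_j}\mu(D)^2\,\frac{C}{r_j}=4^j\cdot 4^{-2j}\frac{C}{r_j}=\frac{C}{4^jr_j}.
\]
Pairs lying in the same final disc (generation $n$, radius $r_n$ or $r$) contribute $4^n\cdot 4^{-2n}$ times the self-energy of the normalized area measure on a disc of radius $\rho$, which is $\approx1/\rho$. This gives $C/(4^n r_n)$, resp.\ $C/(4^n r)$. Summing over $j$ yields $I_1(\mu)\lesssim\sum_{j=0}^n (4^jr_j)^{-1}$, resp.\ $\sum_{j<n}(4^jr_j)^{-1}+(4^nr)^{-1}$, and inverting gives both claims. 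The degenerate case $r_n=0$ is handled by the $K(r)$ estimate alone, with $r>0$. In the case $r\ge r_0$, which uses the convention $r_{-1}=\infty$ and $n=0$, we use a single disc of radius $r$ around the centre.

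The only real obstacle is the separation estimate, in particular its uniformity in the $K(r)$ setting. Everything else is a direct geometric-series-free summation.
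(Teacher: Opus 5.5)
Your proof of \eqref{eq:Cap K_n lower bd} is correct, but it takes a different route from the paper. The paper bounds $I_1(\mu_n)$ through the layer-cake identity \eqref{eq:ener-density} and the two-sided density estimates of \Cref{lem:dens-est}, which it reuses later for analytic capacity. You instead split $\mu_n\times\mu_n$ by the generation of the last common ancestor and use the sibling gap $\frac23(\sqrt2-1)r_j$. That gives only the upper bound on the energy, which is all that is needed.

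The gap is in \eqref{eq:Cap K(r) lower bd}, at the containment step. Your reason, that the centre $c_\ii$ of $D_\ii$ is within $r_n\le r$ of $K$, only gives $B(c_\ii,r)\subset K(r+r_n)$, not $B(c_\ii,r)\subset K(r)$. Shrinking to radius $cr$ does not rescue the triangle inequality: if $r=r_n$ and $r_{n+1}<\frac c2 r_n$, then $\dist(c_\ii,K)\ge r_n-2r_{n+1}>(1-c)r$. (The inclusion $B(c_\ii,r)\subset K(r)$ is in fact true, but it needs more. For instance, $K\cap D_\ii$ contains a point within angle $45^\circ$ of every direction from $c_\ii$, which one gets from the children and grandchildren of $D_\ii$; you do not supply this.)

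Your alternative, the $r$-neighbourhood of $D_\ii$, is genuinely not contained in $K(r)$. For $r=r_n$ and $r_{n+1}\ll r_n\ll r_{n-1}$, the point at distance $2r_n$ from $c_\ii$, midway in direction between two children, is at distance about $1.47\,r_n$ from $K$. The same objection applies to your disc ``around the centre'' when $r\ge r_0$.

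Separately, the remark that the level-$(n-1)$ gap is $\gtrsim r_{n-1}\ge r$ does not give $|x-y|\gtrsim r_j$ for radius-$r$ pieces. The gap constant is only $\approx 0.28$, and when $r\approx r_{n-1}=r_{n-2}/3$ even level $n-2$ is affected. So you should commit to the small-$c$ version from the start.

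The fix is short. Pick $x_\ii\in K\cap D_\ii$ and let $\mu=4^{-n}\sum_{|\ii|=n}\lambda_\ii$, where $\lambda_\ii$ is normalized area on $B(x_\ii,cr)$ with, say, $c=\frac1{20}$.
\begin{itemize}
\item Containment is trivial: $B(x_\ii,cr)\subset K(r)$.
\item Each piece lies in the $cr$-neighbourhood of $D_\ii$, and $r\le r_{n-1}\le r_j$ for $j\le n-1$. Hence pieces in distinct children of a generation-$j$ disc are at distance at least $(\frac23(\sqrt2-1)-2c)r_j\gtrsim r_j$.
\item Your summation then goes through verbatim, with diagonal term $\approx 1/(4^nr)$.
\item For $r\ge r_0$, take a single disc $B(x,cr)$ with $x\in K$.
\end{itemize}

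This is essentially what the paper does, packaged differently. For $r\in[2r_n,2r_{n-1}]$, \Cref{lemma:K_n in K(r)} replaces the last radius by the admissible value $\min(\frac r2,\frac13 r_{n-1})$. The enlarged disc $\wt D_\ii\supset D_\ii$ then lies in $B(x,r)$ for any $x\in K\cap D_\ii$, and separation comes for free from admissibility. So \eqref{eq:Cap K_n lower bd} can be quoted directly for $\wt K_n$. The remaining range $r\in[r_n,2r_n]$ is covered by the same bound with $n+1$ in place of $n$.
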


We prove \Cref{lem:cap-lower-bound} (and hence the lower bounds in \Cref{thm:main}) in \Cref{sec:lower}. These estimates were known in the case of non-homogeneous Cantor sets with no rotations, and the proof is virtually the same. We could not find it written down anywhere, so we include the full proof for completeness. 

Next, we prove the upper bounds in \Cref{thm:main} in \Cref{sec:upper-bd}. The idea of the proof is similar to \cite{zhang2020} and \cite{vardakis-volberg} and involves identifying a recurrence relation between various stages of the construction. Our recurrence is more intricate.

As a corollary of \Cref{thm:main} and \Cref{lem:cap-lower-bound}, we get a characterization of sequences $(r_n)_{n\ge 0}$ such that the corresponding random Cantor sets almost surely have positive Favard length, answering a random version of Problem \ref{prob:qual}.
\begin{corollary}\label{cor:qualit}
    Fix an admissible sequence $(r_n)_{n\ge 0}$. If $\sum_{j=0}^\infty 4^{-j} r_j^{-1}<\infty$, then for any choice of rotations the associated Cantor set satisfies $\Fav(K)>0.$ 
    
    Conversely, if $\sum_{j=0}^\infty 4^{-j} r_j^{-1}=\infty$, and we consider either the independent or synchronized rotations model, then almost surely the associated Cantor set satisfies $\Fav(K)=0.$
\end{corollary}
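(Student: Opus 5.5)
The corollary follows directly from \Cref{thm:main} and \Cref{lem:cap-lower-bound}, together with monotonicity of Favard length and continuity properties of decreasing intersections. The only delicate point is passing from $K_n$ (or $K(r)$) to the limit set $K$.

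For the first statement, suppose $S:=\sum_{j=0}^\infty 4^{-j}r_j^{-1}<\infty$. The most direct route is to work with $K$ itself. Combining \eqref{eq:Cap K_n lower bd} with Mattila's bound \eqref{eq:mat-low-bd} gives $\Fav(K_n)\gtrsim S^{-1}$ uniformly in $n$, for any choice of rotations. Since $K_n$ is a decreasing sequence of compact sets with $\bigcap_n K_n=K$, we have $\proj_\theta(K_n)\downarrow \proj_\theta(K)$ for every $\theta$. Indeed, if $t\in\proj_\theta(K_n)$ for all $n$, then the compact fibres $K_n\cap\proj_\theta^{-1}(t)$ are nonempty and nested, so their intersection is nonempty. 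Hence $|\proj_\theta(K_n)|\to|\proj_\theta(K)|$, and by dominated convergence $\Fav(K)=\lim_n\Fav(K_n)\gtrsim S^{-1}>0$. Alternatively, one could show $\Cap_1(K)\gtrsim S^{-1}$ directly, by taking a weak-$*$ limit of the equilibrium-type measures from the proof of \Cref{lem:cap-lower-bound} and using lower semicontinuity of $I_1$. I will use the projection argument because it avoids reopening that proof.

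For the converse, suppose $\sum_j 4^{-j}r_j^{-1}=\infty$ and consider either random model. By \eqref{eq:E Fav Kn main thm}, $\E[\Fav(K_n)]\lesssim\bigl(\sum_{j=0}^n 4^{-j}r_j^{-1}\bigr)^{-1}\to 0$. Since $K\subset K_n$, we get $0\le\E[\Fav(K)]\le\E[\Fav(K_n)]$ for every $n$, so $\E[\Fav(K)]=0$. A nonnegative random variable with zero expectation vanishes almost surely, so $\Fav(K)=0$ almost surely.

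The main (mild) obstacle is measurability: $\Fav(K)$ and $\Fav(K_n)$ must be random variables for the expectations to make sense. This should already be implicit in \Cref{thm:main}. $K_n$ depends continuously, in the Hausdorff metric, on finitely many rotation angles, and $E\mapsto\Fav(E)$ is upper semicontinuous on compact sets under Hausdorff convergence. So $\Fav(K_n)$ is measurable, and $\Fav(K)=\lim_n\Fav(K_n)$ is measurable as a pointwise limit. With this checked, the argument is complete.
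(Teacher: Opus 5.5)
Your proposal is correct and follows the same route as the paper: \eqref{eq:Cap K_n lower bd} combined with Mattila's bound \eqref{eq:mat-low-bd} gives the first statement, and $\Fav(K)\le\Fav(K_n)$ together with \eqref{eq:E Fav Kn main thm} and the fact that a nonnegative random variable with zero mean vanishes almost surely gives the converse. The paper leaves two points implicit, and you supply both correctly: the passage from the uniform bound on $\Fav(K_n)$ to $\Fav(K)$ via $\proj_\theta(K_n)\downarrow\proj_\theta(K)$, and the measurability of $\Fav(K)$.
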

\begin{proof}
    The first statement follows from the deterministic lower bound in Lemma \ref{lem:cap-lower-bound} together with \eqref{eq:mat-low-bd}. For the second statement, suppose that $\sum_{j=0}^\infty 4^{-j} r_j^{-1}=\infty$. Then, by letting $n\to\infty$ in \eqref{eq:E Fav Kn main thm}, we get $\E\Fav(K)=0$. Since $\Fav(K)$ is a non-negative random variable, this implies $\Fav(K)=0$ almost surely.
\end{proof}

The prior work on random disc models by \cite{zhang2020} and \cite{vardakis-volberg} only gave results about expectation. By following the arguments of \cite{chang-shmerkin-suomala}, we show that for the independent rotations model, the expectation bounds can be converted to almost sure bounds.

\begin{theorem}[Converting expected decay to almost sure decay]
\label{theorem:expected to almost sure decay}
Consider the independent rotations model. Fix an admissible sequence $r_0 \geq r_1 \geq r_2 \geq \cdots$, and suppose $\lim_{r \to 0} \E \Fav(K(r)) = 0$. Then 
\begin{align}\label{eq:almostsure}
    \limsup_{r \to 0} \frac{\Fav K(r)}{\E \Fav K(r)} \leq C
    \qquad\text{almost surely}.
\end{align}
The constant $C$ is absolute.
\end{theorem}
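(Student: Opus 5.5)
Throughout, write $F_n = \Fav(K(r_n))$ and $\Phi_n = \E F_n$. The plan follows the ``martingale-type'' concentration argument of \cite{chang-shmerkin-suomala}. The key structural fact in the independent model is self-similarity with independence. Conditionally on the first $m$ generations, $K(r)$ is the union over the $4^m$ discs $D\subset K_m$ of independent random sets $K^D(r)$. Each $K^D(r)$ is a rescaled, translated copy of a set built from the shifted radius sequence $(r_{m+k}/r_m)_k$, and its distribution is rotation invariant.

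We first reduce to a discrete set of scales. Since $\Fav(K(r))$ is monotone in $r$, and by \eqref{eq:E Fav K(r) main thm} the expectation $\E\Fav(K(r))$ changes only by a bounded factor between comparable radii, it suffices to prove the bound along a sequence of scales $\rho_k\to0$ with $\E\Fav(K(\rho_{k+1}))\ge c\,\E\Fav(K(\rho_k))$. Concretely, we choose $\rho_k$ so that the expectation roughly halves at each step; this is possible because the expectation tends to $0$ and is continuous up to constants. By Borel--Cantelli, it is then enough to show
\[
\sum_k \P\bigl(\Fav K(\rho_k) > C\,\E \Fav K(\rho_k)\bigr)<\infty .
\]

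For the concentration estimate at a fixed scale $r$, we pick a generation $m=m(r)$ and write
\[
\Fav K(r) \le \sum_{\theta\text{-integral}} \Bigl|\bigcup_{D\subset K_m}\proj_\theta K^D(r)\Bigr| \le \sum_{D} \Fav K^D(r) =: S .
\]
The terms of $S$ are independent and identically distributed, and each is bounded by $\Fav(D(r))\lesssim r_m$. The bound $\Fav K(r)\le S$ is too lossy in expectation in general, so the cleaner route is the one in \cite{chang-shmerkin-suomala}. There one writes $\Fav K(r)=\int_0^\pi\int \mathds{1}[\exists D:\ t\in \proj_\theta K^D(r)]\,dt\,d\theta$ and uses the fact that, conditionally on $K_m$, this is a function of $4^m$ independent variables. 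Changing one of them alters the value by at most $\Fav(D(r))\lesssim r_m$. McDiarmid's bounded-differences inequality then gives
\[
\P\bigl(\Fav K(r) > \E[\Fav K(r)\mid K_m] + \lambda\bigr)\le \exp\bigl(-c\lambda^2/(4^m r_m^2)\bigr).
\]
We take $\lambda=\E\Fav K(r)$. Independently of conditioning, $\E[\Fav K(r)\mid K_m]\le \Fav K_m(\cdot)\wedge\ldots$ should be controlled: the conditional expectation is at most $C\,\E \Fav K(r)$ up to the contribution of overlaps between the discs of $K_m$.

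The main obstacle is choosing $m$ so that two requirements hold at once: $4^m r_m^2 \ll (\E\Fav K(r))^2/\log k$, and the conditional expectation is deterministically $\lesssim \E\Fav K(r)$. For the second requirement, I would first try the bound
\[
\E[\Fav K(r)\mid K_m]\le \sum_{D\subset K_m}\E\Fav K^D(r)=4^m r_m\,\E\Fav \widetilde K(r/r_m),
\]
where $\widetilde K$ denotes the rescaled set. Using \eqref{eq:E Fav K(r) main thm}, the right-hand side equals $\bigl(\sum_{j\ge m}^{} 4^{-j}r_j^{-1}+\dots\bigr)^{-1}$. This is comparable to $\E\Fav K(r)$ exactly when the tail sum beyond $m$ dominates the sum over $j<m$. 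So we choose $m$ as the largest index for which $\sum_{j<m}4^{-j}r_j^{-1}\le \tfrac12\sum_{j\le n}4^{-j}r_j^{-1}$, with $r\in[r_n,r_{n-1}]$. For this $m$, the quantity $4^m r_m$ is at most the full sum, and hence the conditional expectation is $\lesssim\E\Fav K(r)$. For the first requirement, set $A:=\sum_{j\le n}4^{-j}r_j^{-1}$. Then $4^m r_m^2/(\E\Fav)^2 \approx A^2 r_m^2 4^m$. Since $4^{-m}r_m^{-1}\le A$, this gives $4^mr_m\ge A^{-1}$, which is the wrong direction. The fix I expect to need, as in \cite{chang-shmerkin-suomala}, is to use admissibility, which forces $4^{-j}r_j^{-1}$ to grow geometrically at rate $\ge 3/4$ or so, and to pass to a slightly deeper $m'=m+L\log k$. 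The additional generations improve the variance factor $4^{m'}r_{m'}^2$ by a factor $(4/9)^{L\log k}$, which yields the needed summability $k^{-2}$, while the conditional expectation worsens by at most an absolute factor. Verifying this trade-off carefully is the crux of the argument.
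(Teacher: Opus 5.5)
There is a genuine gap, and it sits at the step you call the crux. Write $g(r)=\sum_{j<n}4^{-j}r_j^{-1}+4^{-n}r^{-1}$ for $r\in[r_n,r_{n-1})$, so that $\E\Fav K(r)\approx 1/g(r)$; your $A$ should be $g(r)$, with last term $4^{-n}r^{-1}$ rather than $4^{-n}r_n^{-1}$.

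Your overall architecture matches the paper's: discrete scales, Borel--Cantelli, and concentration over $4^m$ independent pieces. Your McDiarmid bound, once $\E[\Fav K(r)\mid K_m]$ is estimated by subadditivity, is the paper's Hoeffding bound for a sum of $4^m$ independent copies of $\Fav K^{(m)}(r)$. So $\Fav K(r)\le S$ is harmless once $m$ is chosen well.

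What you are missing is the estimate that controls the variance factor. Admissibility gives $r_j\ge 3^{m-j}r_m$ for $j\le m$, hence
\[
g(r_m)=\sum_{j\le m}\frac{1}{4^jr_j}\le\frac{1}{3^mr_m}\sum_{j\le m}\Bigl(\frac34\Bigr)^j\le\frac{4}{3^mr_m},
\qquad\text{so}\qquad g(r_m)^2\,4^m r_m^2\le 16\Bigl(\frac49\Bigr)^m .
\]
So the right choice is an $m$ satisfying two conditions. First, $g(r_{m-1})\le\frac34 g(r)$, so that the conditional expectation is $\lesssim 1/g(r)$. Second, $g(r_m)\ge\frac14 g(r)$, so that $g(r)^2 4^m r_m^2\lesssim(4/9)^m$. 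Your ``largest index'' choice already gives $g(r_m)>\frac12 g(r)$ whenever $m<n$. This yields $\P[\Fav K(r)>C/g(r)]\le\exp(-c(9/4)^m)$, with no deepening needed.

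The fix you propose does not work. Passing to $m'=m+L\log k$ does not cost only an absolute factor in the conditional expectation. Admissibility only gives $4^{-j-1}r_{j+1}^{-1}\ge\frac34\cdot 4^{-j}r_j^{-1}$, so these terms may decay geometrically after index $m$. Then $g(r)-g(r_{m'-1})$ can be smaller than $g(r)-g(r_{m-1})$ by a factor $\approx(3/4)^{L\log k}$, a power of $k$, and $m'$ may even exceed $n$. The resulting $C$ would not be absolute. Moreover, the relative gain $(4/9)^{L\log k}$ is useless without an a priori bound on $g(r)^2 4^m r_m^2$, which is exactly the missing estimate above.

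Two further points need fixing.
\begin{enumerate}
\item Summability does not come from a $k^{-2}$ bound. Since $g(r_{m_k})\approx g(\rho_k)\approx 2^k$, each $m$ equals $m_k$ for only $O(1)$ values of $k$, and so $\sum_k\exp(-c(9/4)^{m_k})<\infty$.
\item The case $m=n$ of your choice, where $4^{-n}r^{-1}\ge\frac12 g(r)$, must be handled separately. There many $k$ can share the same $n$, so the counting fails. Handle it deterministically: $K(r)$ is covered by $4^n$ discs of radius $r_n+r\le 2r$, so $\Fav K(r)\lesssim 4^n r\le 2/g(r)$. The paper does this for all ``trivial'' scales, where a single term dominates $g(r)$.
\end{enumerate}
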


We prove \Cref{theorem:expected to almost sure decay} in \Cref{section:expected to almost sure decay}. 

The assumption $\lim_{r \to 0} \E \Fav(K(r)) = 0$, or equivalently $\sum_{j=0}^\infty 4^{-j} r_j^{-1}=\infty$, is necessary in Theorem \ref{theorem:expected to almost sure decay}; see \Cref{remark:lim=0 necessary}.

\begin{remark}
All of our results generalize in a straightforward manner if the construction places $L \geq 3$ sub-discs at each stage instead of $4$. In this setting, we would need to change the constant in the definition of admissibility \eqref{eq:admissible} to ensure that sub-discs at each step are separated. The implied constants will depend on $L$. We present the statements and proofs for the $L=4$ case to avoid additional notation.
\end{remark}

\subsubsection{Prescribed Favard length decay} 

The previous theorems started with a fixed sequence $(r_n)_{n \geq 0}$ and determined the decay rate of $\E \Fav(K(r))$. One can ask the reverse question: given a prescribed decay rate for $\E \Fav(K(r))$, is there a sequence $(r_n)$ that realizes it? This amounts to analyzing the right-hand side of \eqref{eq:E Fav K(r) main thm}. We give a complete characterization, up to absolute constants, of the decay rates that can be attained in this way.

\begin{theorem}[Prescribed expected Favard length decay]
\label{theorem:prescribed}
    Let $f:(0,\infty)\to (0,\infty)$ be a concave increasing function satisfying $\lim_{r \to \infty} f'(r) = 1$. Then there exists an admissible sequence $(r_n)_{n\ge 0}$ such that for both the synchronized and independent models,
    \begin{equation*}
        \E \Fav(K(r))
        \approx  
        f(r) \quad\text{for all $r > 0$}.
    \end{equation*}
    The implied constant is absolute.
\end{theorem}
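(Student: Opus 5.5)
By \Cref{thm:main}, for both models $\E\Fav(K(r))\approx F(r)$, where
\[
F(r):=\Bigl(S_{n-1}+\frac{1}{4^n r}\Bigr)^{-1}\quad\text{for } r\in[r_n,r_{n-1}],\qquad S_{n-1}:=\sum_{j=0}^{n-1}\frac{1}{4^jr_j},
\]
with $S_{-1}=0$ and $r_{-1}=\infty$. So the statement is purely deterministic: given $f$, I must build an admissible sequence $(r_n)$ with $F\approx f$ on $(0,\infty)$. Two elementary facts about $f$ drive everything.
\begin{enumerate}
\item[(i)] Since $f$ is concave and positive on $(0,\infty)$, the ratio $f(r)/r$ is non-increasing in $r$.
\item[(ii)] Since $f'$ is non-increasing and tends to $1$, we have $f'\ge 1$. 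Hence $f(r)\ge r$ and $f(r)/r\to 1$ as $r\to\infty$.
\end{enumerate}
On each interval $[r_n,r_{n-1}]$ the heuristic is $F(r)\approx\min\bigl(1/S_{n-1},\,4^nr\bigr)$. So I want $1/S_{n-1}$ to track $f(r_{n-1})$, and the "linear piece" $4^nr$ to take over exactly where $f(r)$ drops below $4^nr$.

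\textbf{Construction.} For the base scale, let $r_0$ be the point where $f(r_0)=2r_0$. If $f\le 2r$ everywhere, take $r_0=0$: then $F(r)=r\approx f(r)$ and we are done (degenerate case). By (i), $f(r)\le 2r$ for $r\ge r_0$, and together with (ii) this gives $F(r)=r\approx f(r)$ for $r\ge r_0$. Inductively, set
\[
t_n:=\sup\{r>0: f(r)\ge 4^n r\},\qquad r_n:=\min\bigl(t_n,\ r_{n-1}/3\bigr).
\]
If the set is empty (possible when $\lim_{r\to0}f(r)/r<\infty$), take $t_n=0$; from then on all $r_j=0$, which is the allowed degenerate case. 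Admissibility is built in.

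\textbf{Inductive invariants.} I claim
\[
4^nr_n\le f(r_n)\le C_0\,4^nr_n\qquad\text{and}\qquad \frac{1}{f(r_n)}\le S_n\le \frac{C_1}{f(r_n)}
\]
for absolute constants $C_0,C_1$.
\begin{itemize}
\item[(a)] When $r_n=t_n$, continuity gives $f(r_n)=4^nr_n$.
\item[(b)] In the forced case $r_n=r_{n-1}/3<t_n$, we get $f(r_n)\ge 4^nr_n$ from (i). For the upper bound, $f(r_n)\le f(r_{n-1})\le C_0 4^{n-1}r_{n-1}=\tfrac34 C_0\,4^nr_n$. So $C_0=2$ works and the constant does not deteriorate.
\end{itemize}
Then
\[
S_n=S_{n-1}+\frac{1}{4^nr_n}\le \frac{C_1}{f(r_{n-1})}+\frac{C_0}{f(r_n)}.
\]
This alone would let $C_1$ grow with $n$. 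I expect closing this bound to be the main technical point. My plan is to show that $1/f(r_j)$ grows geometrically along the subsequence of non-forced indices, and that consecutive forced steps are harmless. For the latter, note that in a forced step $4^nr_n=\tfrac43\,4^{n-1}r_{n-1}$, so the terms $1/(4^jr_j)$ decrease by the factor $3/4$ along any run of forced steps. Since $f(r_j)\approx 4^jr_j$ by the first invariant, the quantities $4^jr_j$ form a sequence whose reciprocals sum, up to constants, to the last term. To make this rigorous I would handle the non-forced steps separately, where $4^jr_j=f(t_j)$ is non-increasing in $j$. I would refine the choice, for instance $r_n=\min(t_n/2,r_{n-1}/3)$, if needed to force a fixed-ratio drop of $4^jr_j$ at least every few steps.

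\textbf{Comparison on each interval.} Fix $r\in[r_n,r_{n-1}]$.
\begin{itemize}
\item Upper bound for $F$. Since $r\ge r_n$, either $r\ge t_n$, in which case (i) gives $f(r)\le 4^nr$, or we are in the forced case and $f(r)\le f(r_{n-1})\lesssim 4^{n-1}r_{n-1}\lesssim 4^nr$. Also $f(r)\le f(r_{n-1})\approx 1/S_{n-1}$. Hence $1/f(r)\gtrsim S_{n-1}+1/(4^nr)$, i.e.\ $f\lesssim F$.
\item Lower bound for $F$. By concavity, $f(r)\ge (r/r_{n-1})f(r_{n-1})\ge 4^{n-1}r$, and $f(r)\ge f(r_n)\ge 4^n r_n$. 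Combined with monotonicity this gives $f(r)\gtrsim\min\bigl(f(r_{n-1}),4^nr\bigr)\approx F(r)$.
\end{itemize}
This yields $F\approx f$ on every interval, and then $\E\Fav(K(r))\approx f(r)$ by \Cref{thm:main}.
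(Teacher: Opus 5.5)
There is a genuine gap, and it is in the construction itself, not just in the bookkeeping. The invariant $S_n\le C_1/f(r_n)$ is false for your radii, and with it the direction $f\lesssim F$.

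First, the forced case is essentially empty. A forced step at $n$ needs $f(r_{n-1}/3)\ge \tfrac43\,4^{n-1}r_{n-1}$. This is impossible when $r_{n-1}=t_{n-1}$ with $n-1\ge 1$, because then $f(r_{n-1}/3)\le f(r_{n-1})=4^{n-1}r_{n-1}$. It is also impossible at $n=3$ after two forced steps, because $f(r_0/27)\le 2r_0<\tfrac{64}{27}r_0$. So $r_n=t_n$ for all $n\ge 3$, and every term satisfies $\frac{1}{4^nr_n}=\frac{1}{f(r_n)}$. Each single term already equals the whole target. Then $S_n\approx\sum_{j\le n}1/f(r_j)$, which is $\lesssim 1/f(r_n)$ only if $f(r_j)$ decays geometrically in $j$, and that need not happen.

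Concretely, take $f(r)=1/\log(1+1/r)$. It is concave and increasing with $f'\to 1$; it is the four-corner decay rate. Put $x_n=1/t_n$, so that $x_n/\log(1+x_n)=4^n$. Then $\frac{1}{4^nr_n}=\log(1+x_n)\approx n\log 4$, so $S_n\approx n^2$. Hence $F(r_n)=1/S_n\approx n^{-2}$, while $f(r_n)\approx n^{-1}$.

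The example $f(r)=\min(Mr,1)+r$ is even starker. It gives $r_n=1/(4^n-1)$ with all terms $\approx 1$, so $F(r_n)\approx 1/n$ against $f(r_n)\approx 1$ for $n$ up to about $\log_4 M$.

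Your suggested refinement $r_n=\min(t_n/2,r_{n-1}/3)$ moves in the wrong direction. It makes $r_n$ smaller and therefore the terms $1/(4^nr_n)$ larger.

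The missing idea is that $\frac{1}{4^nr_n}$ must match the \emph{increment} of $1/f$ across scale $n$, not its total. The paper works in $s=1/r$, where $1/F(1/s)=\fjump(s)$. Up to a factor $4/3$, this is the piecewise linear function with slope $4^{-n}$ on $(s_{n-1},s_n)$. The paper then:
\begin{enumerate}
\item replaces $1/f(1/s)$ by a comparable concave $g$, using that increasing $\phi$ with $\phi(x)/x$ non-increasing are concave up to a factor $2$;
\item smooths $g$ and regularizes it so that $g'(s)\ge\frac14 g'(s/3)$;
\item sets $s_n=\inf\{s: g'(s)\le 4^{-(n+1)}\}$.
\end{enumerate}
This is a threshold on the \emph{derivative} $g'$, whereas your $t_n$ is the threshold $g(s)/s=4^{-n}$ on the average slope.

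Integrating $g'\approx 4^{-n}$ on $(s_{n-1},s_n)$ gives $g\approx\fjump$, and the regularization gives $s_n\ge 3s_{n-1}$. In relatively flat stretches of $f$, this pushes $r_n/r_{n-1}$ up to about $1/3$. The terms $1/(4^jr_j)$ then decay geometrically and the sum stays bounded. For the logarithmic example it returns $r_n\approx 4^{-n}$, as it should.
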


We prove \Cref{theorem:prescribed} in \Cref{section:prescribe}.

\begin{remark}
    This is sharp in the sense that there are no other functions $f$ to consider: for any compact set $E \subset \R^n$, 
    \begin{align}
        \label{eq:f conc incr}
    f(r) := \Fav(E(r)) \text{ is a concave increasing function satisfying } \lim_{r \to \infty} f'(r) = 2\pi.
    \end{align}
    
    To see this, first consider a compact set $K \subset \R$. For any $r > 0$, let $N(r)$ be the number of connected components of the open $r$-neighborhood $K(r)$. Then $N(r)$ is a right-continuous, non-increasing function such that $N(r) = 1$ for all sufficiently large $r$. Let $f_K(r) = |K(r)|$. Note that if $N(r) = N_0$ on some interval $(a,b)$, then $f_K'(r) = 2N_0$ on $(a,b)$. This shows that $f_K(r)$ is a concave increasing function with $f_K'(r) = 2$ for all sufficiently large $r$. 

    Furthermore, this is a complete characterization in $\R$. See \Cref{section:prescribe countable} for details.

    For any compact $E \subset \R^2$, the argument above shows that for any $\theta$, the function $r \mapsto |\proj_\theta(E(r))|$ is concave increasing and derivative equal to $2$ for all $r \geq \operatorname{diam}(E)$. Integrating over $\theta \in [0,\pi]$ gives us \eqref{eq:f conc incr}.
\end{remark}

Combining the results above gives the following. 

\begin{corollary}[Existence of Cantor sets for any prescribed decay rate]
\label{corollary:prescribed}
    Let $f:(0,\infty)\to (0,\infty)$ be a concave increasing function satisfying $\lim_{r \to 0} f(r) = 0$ and $\lim_{r \to \infty} f'(r) = 1$. Then there exists an admissible sequence $r_0 \geq r_1 \geq r_2 \geq \cdots$ such that for the independent rotations model, the following holds almost surely:
    \begin{equation*}
    	\text{There exists $\epsilon > 0$ such that for all $0 < r < \epsilon$,}\qquad
        \Fav(K(r))\approx \Cap_1(K(r)) \approx f(r).
    \end{equation*}
    The implied constants are absolute.
\end{corollary}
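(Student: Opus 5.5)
Corollary~\ref{corollary:prescribed} follows by chaining the earlier results. First, I apply \Cref{theorem:prescribed} to $f$ to obtain an admissible sequence $(r_n)_{n\ge0}$ with $\E\Fav(K(r))\approx f(r)$ for all $r>0$, for the independent model. Since $f(r)\to0$ as $r\to0$, we get $\lim_{r\to0}\E\Fav(K(r))=0$. Hence the hypothesis of \Cref{theorem:expected to almost sure decay} holds, and almost surely $\limsup_{r\to0}\Fav(K(r))/\E\Fav(K(r))\le C$. So almost surely there is $\epsilon>0$ with $\Fav(K(r))\le 2C\,\E\Fav(K(r))\lesssim f(r)$ for $0<r<\epsilon$.

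For the matching lower bound I use the deterministic chain. By Mattila's bound \eqref{eq:mat-low-bd}, $\Fav(K(r))\gtrsim\Cap_1(K(r))$. By \eqref{eq:Cap K(r) lower bd} of \Cref{lem:cap-lower-bound}, which holds for every choice of rotations and hence surely, $\Cap_1(K(r))$ is bounded below by a constant times
\[
A(r):=\Bigl(\sum_{j=0}^{n-1}\frac{1}{4^jr_j}+\frac{1}{4^nr}\Bigr)^{-1},\qquad r\in[r_n,r_{n-1}].
\]
By \eqref{eq:E Fav K(r) main thm} of \Cref{thm:main}, $A(r)\approx\E\Fav(K(r))\approx f(r)$, with absolute constants. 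This gives the sure lower bound $\Fav(K(r))\gtrsim\Cap_1(K(r))\gtrsim f(r)$ for all $r>0$.

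It remains to bound the capacity from above. For this I would invoke the general inequality $\Cap_1(E)\lesssim\Fav(E)$ if it is available; otherwise I would settle for the chain $\Cap_1(K(r))\lesssim\Fav(K(r))\lesssim f(r)$ on $r<\epsilon$. If no such inequality is available, I would prove directly that $\Cap_1(K(r))\lesssim A(r)$ deterministically. To do so, I would test against the natural measure $\mu$ on $K(r)$ giving equal mass $4^{-k}$ to each generation-$k$ disc, compute $I_1(\mu)\approx\sum_{j<n}4^{-j}r_j^{-1}+4^{-n}r^{-1}$, and then argue that this near-uniform measure is close to optimal. This final step, making the capacity upper bound rigorous for arbitrary probability measures, is the main obstacle. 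I would handle it with the standard mass-distribution argument: for any probability measure $\nu$ on $K(r)$, sum the energy contributions over scales and apply Cauchy--Schwarz across the $4^j$ discs of generation $j$. Each generation-$j$ disc $D$ contributes at least $\nu(D)^2/r_j$ to $I_1(\nu)$ up to constants, and $\sum_D\nu(D)^2\ge4^{-j}$; this step uses the separation given by admissibility. Summing over $j\le n$, with the last scale $r$ in place of $r_n$, yields $I_1(\nu)\gtrsim 1/A(r)$. Combining both directions gives $\Fav(K(r))\approx\Cap_1(K(r))\approx f(r)$ for $r<\epsilon$, almost surely.
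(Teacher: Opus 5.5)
Your proof is correct and is the same chain the paper intends: \Cref{theorem:prescribed} produces the sequence, \Cref{theorem:expected to almost sure decay} gives $\Fav(K(r))\lesssim f(r)$ almost surely for small $r$, and Mattila's bound \eqref{eq:mat-low-bd} combined with \Cref{lem:cap-lower-bound} and \eqref{eq:E Fav K(r) main thm} gives $f(r)\lesssim \Cap_1(K(r))\lesssim \Fav(K(r))$. The only issue is one of presentation. The inequality $\Cap_1(E)\lesssim\Fav(E)$ that you treat as possibly unavailable is exactly Mattila's bound \eqref{eq:mat-low-bd}, which you had already used, so $\Cap_1(K(r))\lesssim\Fav(K(r))\lesssim f(r)$ for $r<\epsilon$ closes the argument at once and there is no ``main obstacle''. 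The direct energy estimate is therefore unnecessary, though it would work and would even give the deterministic bound $\Cap_1(K(r))\lesssim A(r)$, provided you sum over scales using the geometric decay $r_{j+1}\le r_j/3$ rather than the separation.
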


One consequence of \Cref{corollary:prescribed} is the sharpness of Mattila's lower bound \eqref{eq:mat-low-bd}. Until recently, it was not known whether there existed any set $E \subset \R^2$ with $\Fav(E) = 0$ for which Mattila's lower bound $\Fav(E(r)) \gtrsim \Cap_1(E(r))$ is sharp for all $r > 0$. The recent work \cite{chang-shmerkin-suomala} gave an affirmative answer, but the examples there were restricted to the logarithmic decay rate $\Cap_1(E(r)) \approx 1/\log(1/r)$. In contrast, \Cref{corollary:prescribed} shows that Mattila's bound is sharp for any decay rate.

\begin{remark}
If the goal is to show that for any such $f$, we can find a set $K \subset \R^2$ with the desired Favard decay rate, that can be easily done by considering sets $K$ which are contained inside a line; see \Cref{section:prescribe countable}. However, such sets would have $1$-capacity zero.
\end{remark}

\begin{remark}
In \cite{ps2005}, Peres and Solomyak use random non-homogeneous grid-based Cantor sets to study the Hausdorff measure $\cH^\phi$ associated with a gauge function $\phi$. For any non-decreasing function $\phi : [0,\infty) \to [0, \infty)$ such that $\phi(r)/r^2$ is non-increasing and $\int_0^1 \frac{\phi(r)}{r^2} \, dr = \infty$, they construct a model of random Cantor sets $K \subset \R^2$ such that $\cH^\phi(K) > 0$ deterministically and $\Fav(K) = 0$ almost surely; see \cite[Proposition 2.1]{ps2005}. While one can infer upper bounds on $\E \Fav(K_n)$ from their arguments (see \cite[Lemma 4.1]{ps2005}), quantitative estimates were not the focus of their paper.
\end{remark}

\subsection{Vitushkin's conjecture}\label{subsec:Vit}

We say that a compact set $E \subset \C$ is removable for bounded analytic functions if for any open set $\Omega \supset E$, every bounded analytic function on $\Omega\setminus E$ has an analytic extension to $\Omega$. Ahlfors \cite{ahlfors} introduced the analytic capacity of a set $E$, defined by
\[
\gamma(E) = \sup |f'(\infty)|,
\]
where the supremum is over all analytic functions $f : \C \setminus E \to \C$ with $|f| \leq 1$ and $f'(\infty) = \lim_{z \to \infty} z(f(z) - f(\infty))$. Ahlfors showed that $E$ is removable for bounded analytic functions if and only if $\gamma(E) = 0$.

In the 1960s, Vitushkin conjectured that $\gamma(E) > 0$ if and only if $\Fav(E) > 0$. Mattila \cite{mattila1986} showed that Vitushkin's conjecture is false. Jones and Murai \cite{jones-murai}, and later Joyce and M\"orters \cite{joyce-morters} gave examples of sets satisfying $\Fav(E) = 0$ and $\gamma(E) > 0$. We prove that our random construction provides a new class of examples which are simpler than those from \cite{jones-murai,joyce-morters}. Let $\theta_k\coloneqq 4^{-k}/r_k.$

\begin{theorem}\label{thm:anal-cap}
    For any admissible sequence of radii $(r_n)_{n\ge 0}$ and any choice of rotations, the associated Cantor set $K$ satisfies
    \begin{equation}\label{eq:analytic-cap}
        \gamma(K_n)\approx \left(\sum_{k=0}^{n}\theta_k^2\right)^{-1/2}=\left( \sum_{k=0}^{n} \frac{1}{4^{2k} r_k^2} \right)^{-1/2}.
    \end{equation}
\end{theorem}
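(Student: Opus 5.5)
We will prove \eqref{eq:analytic-cap} through Tolsa's theorem. It says that $\gamma(E)\approx\gamma_+(E)$, and that $\gamma_+(E)$ is comparable to the supremum of $\mu(E)$ over positive measures $\mu$ supported on $E$ with linear growth $\mu(B(x,\rho))\le\rho$ and $c^2(\mu)\le\mu(E)$. An equivalent formulation is $\gamma(E)\approx \sup\{\mu(E)^{3/2}(\mu(E)+c^2(\mu))^{-1/2}\}$ over measures with linear growth. Here $c^2(\mu)$ denotes the Menger curvature of $\mu$. Analogous estimates are known for Garnett's non-homogeneous Cantor sets without rotations, due to Mateu--Tolsa--Verdera ("The planar Cantor sets of zero analytic capacity and the local $T(b)$-theorem"). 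Our plan is to check that their argument depends only on the separation of the discs, and is insensitive to the rotation of each square.

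\emph{Natural measure.} Let $\mu_n$ be the probability measure on $K_n$ giving mass $4^{-n}$ to each disc of generation $n$, normalized by the length of that disc (for example $4^{-n}$ times normalized arc length or area). Admissibility ($r_{k+1}\le r_k/3$) implies that distinct sub-discs of a generation-$k$ disc are separated by $\gtrsim r_k$. Hence for $\rho\in[r_{k+1},r_k]$ we get $\mu_n(B(x,\rho))\lesssim 4^{-k}$. So the linear growth ratio at scale $r_k$ is $\approx 4^{-k}/r_k=\theta_k$, and $\sup_x\mu_n(B(x,\rho))/\rho\lesssim\sup_k\theta_k$.

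\emph{Curvature computation.} This is the key step. We estimate $c^2(\mu_n)$ by splitting triples $(x,y,z)$ according to the generation $k$ of the smallest disc $D$ containing all three points. For $k<n$, two of the points lie in different children of $D$, and the third lies in some child. By separation, the mutual distances are all $\approx r_k$, up to the triple in a single child being handled at a deeper level. Whenever the three points lie in at least two distinct children and are not nearly collinear (which has positive probability uniformly in the rotation, since the four centres form a square), we get $c(x,y,z)\approx 1/r_k$. The contribution from disc $D$ is then $\approx \mu(D)^3 r_k^{-2}$. Summing over the $4^k$ discs gives
\[
c^2(\mu_n)\approx\sum_{k=0}^n 4^k\cdot 4^{-3k}r_k^{-2}=\sum_{k=0}^n\theta_k^2 .
\]
The lower bound in this comparison uses the square geometry: any three of four corners of a square are non-collinear, regardless of the rotation $\omega_D$. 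The upper bound uses that triples whose points lie in distinct children have all sides $\gtrsim r_k$. Triples in the final generation-$n$ discs give the $k=n$ term. We expect the main technical obstacle to be the treatment of the inner discs' own curvature at level $n$ together with the upper bound for triples straddling scales. We would follow Mateu--Tolsa--Verdera, noting that their estimate $c(x,y,z)\le 2/\dist(x,L_{yz})$-type bounds hold uniformly in the rotations.

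\emph{Lower bound on $\gamma$.} Take $\mu=\lambda\mu_n$ with $\lambda$ chosen so that $\mu$ has linear growth and $c^2(\mu)\lesssim\mu(K_n)$. Since $c^2$ scales like $\lambda^3$ and mass like $\lambda$, this requires $\lambda\lesssim\min\{(\sup\theta_k)^{-1},(\sum\theta_k^2)^{-1/2}\}=(\sum\theta_k^2)^{-1/2}$. Tolsa's theorem then gives $\gamma(K_n)\gtrsim(\sum_k\theta_k^2)^{-1/2}$.

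\emph{Upper bound on $\gamma$.} Tolsa's characterization gives $\gamma(K_n)\approx\gamma_+(K_n)$, and we must bound every admissible $\mu$, not just $\mu_n$. Following Mateu--Tolsa--Verdera, we first use the symmetry of the construction: subadditivity/comparability arguments let us reduce to measures assigning equal mass to each generation-$n$ disc. The key step is to show that an optimal measure can be taken comparable to a multiple of $\mu_n$ on the discs of $K_n$. This follows from averaging, since $\gamma_+$ is comparable under the $4$-fold symmetries within each disc and is countably semiadditive. The curvature computation above then forces $\mu(K_n)\lesssim(\sum\theta_k^2)^{-1/2}$. This reduction step is the part we expect to require the most care.
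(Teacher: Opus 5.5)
\textbf{Verdict: the lower bound is fine, but the upper bound on $\gamma(K_n)$ has a genuine gap.}

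\textbf{Lower bound.} This part is correct and essentially the paper's argument. The estimates $c(x,y,z)\le 2/\max\{|x-y|,|y-z|,|x-z|\}$ and \eqref{eq:separation} give $c^2(\mu_n)\lesssim\sum_k\theta_k^2$. Together with $\max_k\theta_k\le(\sum_k\theta_k^2)^{1/2}$, this makes a small multiple of $(\sum_k\theta_k^2)^{-1/2}\mu_n$ admissible for $\gamma_+\le\gamma$. You use global curvature where the paper uses Verdera's pointwise potential $U_\nu$; both work.

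\textbf{Upper bound.} The characterization you invoke is a supremum over \emph{all} linear-growth measures on $K_n$. The whole difficulty sits in your reduction ``an optimal measure can be taken comparable to a multiple of $\mu_n$'', and the justification you give does not work.
\begin{enumerate}
\item For general (e.g.\ independent) rotations there are no $4$-fold symmetries. Rotating $D_\ii$ by $90^\circ$ about its centre carries the position of $D_{\ii j}$ to that of $D_{\ii(j+1)}$. It does not carry $K_n\cap D_{\ii j}$ onto $K_n\cap D_{\ii(j+1)}$, because the deeper angles differ.
\item Even in the axis-aligned case, global isometries of $K_n$ (a dihedral group of order $8$) cannot equalize the masses of $4^n$ discs. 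The within-disc rotations you would need are only piecewise isometries, under which neither $\gamma$ nor $c^2$ is invariant.
\item Averaging $\mu$ over such maps has no reason to preserve $c^2(\mu)\le\mu(E)$. Since $c^2$ is cubic in $\mu$, the average produces cross terms $\iiint c(gx,hy,kz)^2\,d\mu(x)\,d\mu(y)\,d\mu(z)$ that $c^2(\mu)$ does not control.
\item Countable semiadditivity only gives $\gamma(K_n)\lesssim\sum_{D\in\cD_k}\gamma(D)=4^kr_k$ for each $k$, i.e.\ $\gamma(K_n)\lesssim(\max_k\theta_k)^{-1}$. This misses the $\ell^2$ gain entirely: for $r_k=4^{-k}$ it gives $O(1)$ instead of $n^{-1/2}$.
\item Competitors really can look nothing like $\mu_n$. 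In the axis-aligned case, always keeping the two lower children produces $2^n$ discs with collinear centres. So $K_n$ contains $2^n$ collinear diameters carrying a linear-growth measure with zero curvature.
\end{enumerate}
So the curvature computation for $\mu_n$ alone gives no bound on $\mu(K_n)$ for general $\mu$.

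\textbf{Missing idea.} Use the dual characterization, Theorem~\ref{thm:tolsa}: $\gamma(E)\approx\inf\{\nu(\R^2): U_\nu\ge 1 \text{ on } E\}$. This turns the upper bound into exhibiting a \emph{single} measure.

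Take $\nu=C(\sum_k\theta_k^2)^{-1/2}\mu_n$. You then need the \emph{pointwise} bound $c_{\mu_n}(x)\gtrsim(\sum_k\theta_k^2)^{1/2}$ for every $x\in K_n$; your averaged estimate $c^2(\mu_n)\gtrsim\sum_k\theta_k^2$ is not enough.

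Your square-geometry observation localizes to give this. Let $D_{j-1}(x)$ be the generation-$(j-1)$ disc containing $x$, and take $y$ in a child of $D_{j-1}(x)$ other than the one containing $x$.
\begin{itemize}
\item One of the two remaining children of $D_{j-1}(x)$ lies at distance $\gtrsim r_{j-1}$ from the line through $x$ and $y$. This yields a contribution $\gtrsim 4^{-2j}r_{j-1}^{-2}\approx\theta_{j-1}^2$ to $c^2_{\mu_n}(x)$.
\item An equilateral configuration inside the generation-$n$ disc containing $x$ supplies the $\theta_n^2$ term.
\end{itemize}
Summing over $j$ gives the required pointwise bound. This is exactly the paper's route.
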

Letting $n\to\infty$ and using the outer regularity of Favard length and analytic capacity (see \cite[Proposition 1.7]{tolsa2014analytic}) we get that $\gamma(K)>0$ if and only if $\sum_{k=0}^\infty \theta_k^2 < \infty$. Together with Corollary \ref{cor:qualit} this gives the following.
\begin{corollary}
    Consider an admissible sequence of radii $(r_n)_{n\ge 0}$ satisfying $\sum_{k=0}^\infty \theta_k = \infty$ and $\sum_{k=0}^\infty \theta_k^2 < \infty$ (e.g., $r_k=k 4^{-k}$). Let $K$ be a corresponding random Cantor set obtained with either the independent or synchronized rotations model. Then $\gamma(K) > 0$, and almost surely $\Fav(K) = 0$.
\end{corollary}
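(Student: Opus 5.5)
The corollary follows by combining \Cref{thm:anal-cap} with \Cref{cor:qualit}. The two hypotheses on $(\theta_k)$ feed into these results separately: $\sum_k\theta_k^2<\infty$ is used for the analytic capacity, and $\sum_k\theta_k=\infty$ for the Favard length.

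\emph{Positivity of $\gamma(K)$.} \Cref{thm:anal-cap} holds for any choice of rotations, so it holds pointwise for every realization of either random model. It gives
\[
\gamma(K_n)\gtrsim \Bigl(\sum_{k=0}^n\theta_k^2\Bigr)^{-1/2}\ge \Bigl(\sum_{k=0}^\infty\theta_k^2\Bigr)^{-1/2}=:c>0
\]
for every $n$, with an absolute implied constant. The sets $K_n$ are compact and decrease to $K$. Moreover, any open set $U\supset K$ contains $K_n$ for all large $n$, since $\operatorname{diam}$ of the generation-$n$ discs is $2r_n\to0$. By outer regularity of analytic capacity (\cite[Proposition 1.7]{tolsa2014analytic}) we get $\gamma(K)=\lim_n\gamma(K_n)\gtrsim c>0$. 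This conclusion is deterministic, so it holds for every realization.

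\emph{Vanishing of $\Fav(K)$.} We have $\sum_k 4^{-k}r_k^{-1}=\sum_k\theta_k=\infty$. The second part of \Cref{cor:qualit} then gives $\Fav(K)=0$ almost surely, for both the independent and the synchronized models. That part rests on the upper bound \eqref{eq:E Fav Kn main thm}, which forces $\E\Fav(K)\le\E\Fav(K_n)\to0$.

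\emph{The example.} The only step needing care is checking that $r_k=k4^{-k}$ is a legitimate example. Here $\theta_k=1/k$, so $\sum\theta_k=\infty$ and $\sum\theta_k^2<\infty$. However, $r_0=0$ is degenerate, and admissibility $r_{k+1}\le r_k/3$ amounts to $3(k+1)\le 4k$, which holds only for $k\ge3$. So I would replace finitely many initial terms: set $r_k=k4^{-k}$ for $k\ge3$ and choose $r_0\ge 3r_1\ge 9r_2$ with $r_2\ge 3r_3$ (for instance $r_2=3r_3$, $r_1=3r_2$, $r_0=3r_1$). Changing finitely many terms does not affect convergence or divergence of either series. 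This verification is routine, and it is the only place where anything beyond quoting earlier results is required.
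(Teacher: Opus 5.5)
Your proposal is correct and follows the paper's argument: \Cref{thm:anal-cap} plus outer regularity of analytic capacity gives $\gamma(K)\gtrsim(\sum_{k}\theta_k^2)^{-1/2}>0$ for every realization, and the second part of \Cref{cor:qualit} gives $\Fav(K)=0$ almost surely. You also correctly note that the paper's example $r_k=k4^{-k}$ has $r_0=0$ and is admissible only from $k=3$ on, so finitely many initial terms must be changed; your fix does this without affecting either series.
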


Originally the problem of estimating $\gamma(K)$ for non-homogeneous Cantor sets of this type (without rotations) was considered by Garnett \cite{garnett1974analytic}. The sharp lower bound in this case is due to Mattila \cite{mattila1996}, and the matching upper bound is due to Eiderman \cite{eiderman1998} and Mateu, Tolsa, and Verdera \cite{mtv}. Adding the rotations changes little, but for completeness we include the proof in Section \ref{sec:analytic-capacity}. We followed Section 4.7 in \cite{tolsa2014analytic}.

We mentioned above that the implication $\gamma(E)>0 \implies \Fav(E)>0$ is false in general (for sets of finite length it was shown to be true by David \cite{david1998unrectifiable}). Whether the converse implication 
\begin{equation}\label{eq:vit}
    \Fav(E)>0\quad\implies\quad \gamma(E)>0
\end{equation}
holds remains an open problem -- it is only known to be true for sets of $\sigma$-finite length by \cite{calderon1977cauchy}. It follows from Corollary \ref{cor:qualit} and \eqref{eq:analytic-cap} that for the random Cantor sets we consider the implication \eqref{eq:vit} is true almost surely. 

The implication \eqref{eq:vit} would be implied by the following estimate, conjectured e.g. in \cite{ivanov1994onsets}:
\begin{equation}\label{eq:quan-Vit}
    \gamma(E)\gtrsim\Fav(E).
\end{equation}
It follows from Theorems \ref{thm:main} and \ref{thm:anal-cap} that \eqref{eq:quan-Vit} holds in expectation for our random Cantor sets.
\begin{corollary}
     For any admissible sequence of radii $(r_n)_{n\ge 0}$, in both the synchronized and independent random models, the associated Cantor sets satisfy $\E\gamma(K)\gtrsim\E\Fav(K).$
\end{corollary}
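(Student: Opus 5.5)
The corollary follows by chaining the two-sided estimates already stated: for each $n$ I compare $\E\gamma(K_n)$ and $\E\Fav(K_n)$ through the explicit quantities in $\theta_k=4^{-k}/r_k$, and then let $n\to\infty$. By \Cref{thm:anal-cap}, which holds deterministically for every choice of rotations,
\[
\gamma(K_n)\approx \Bigl(\sum_{k=0}^n\theta_k^2\Bigr)^{-1/2}
\]
for every realization. In particular $\E\gamma(K_n)$ is comparable to the same deterministic quantity. By the upper bound in \eqref{eq:E Fav Kn main thm} of \Cref{thm:main},
\[
\E\Fav(K_n)\lesssim \Bigl(\sum_{k=0}^n\theta_k\Bigr)^{-1}.
\]

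The key elementary step is therefore the inequality
\[
\Bigl(\sum_{k=0}^n\theta_k^2\Bigr)^{1/2}\lesssim \sum_{k=0}^n\theta_k .
\]
This holds with constant $1$, since $\sum\theta_k^2\le(\sum\theta_k)^2$ for nonnegative terms. Taking reciprocals gives $\gamma(K_n)\gtrsim(\sum_{k\le n}\theta_k)^{-1}\gtrsim\E\Fav(K_n)$ for every realization and every $n$, and hence $\E\gamma(K_n)\gtrsim\E\Fav(K_n)$.

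To pass to the limit set $K$, I use that $K_n\downarrow K$ is a decreasing sequence of compact sets. Outer regularity of analytic capacity and of Favard length, \cite[Proposition 1.7]{tolsa2014analytic}, as already invoked after \Cref{thm:anal-cap}, gives $\gamma(K_n)\to\gamma(K)$ and $\Fav(K_n)\to\Fav(K)$ for every realization. Two facts then justify exchanging limit and expectation:
\begin{itemize}
\item $\Fav(K_n)\le\Fav(K_0)$, a deterministic bound, so dominated convergence gives $\E\Fav(K_n)\to\E\Fav(K)$;
\item $\gamma(K_n)\le\gamma(K_0)$, and $\gamma(K_n)$ is in fact comparable to a deterministic number, so $\E\gamma(K_n)\to\E\gamma(K)$.
\end{itemize}
Letting $n\to\infty$ in $\E\gamma(K_n)\gtrsim\E\Fav(K_n)$ yields $\E\gamma(K)\gtrsim\E\Fav(K)$ with an absolute constant. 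This argument covers both cases: if $\sum\theta_k=\infty$ the right-hand side tends to $0$, and if $\sum\theta_k<\infty$ both sides stay bounded below.

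I expect no step here to be a real obstacle, since all the substance lies in \Cref{thm:main} and \Cref{thm:anal-cap}. The only points needing care are the measurability of $\gamma(K)$ and $\Fav(K)$ as random variables, which follows from their being monotone limits of quantities depending continuously on finitely many rotation angles, and the fact that \Cref{thm:main} is applied only through its upper bound.
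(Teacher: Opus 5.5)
Your proposal is correct and is essentially the paper's argument, which the paper states in one line: combine the deterministic bound $\gamma(K_n)\approx(\sum_{k\le n}\theta_k^2)^{-1/2}$ from \Cref{thm:anal-cap} with the upper bound on $\E\Fav(K_n)$ from \Cref{thm:main}, use $\sum_k\theta_k^2\le(\sum_k\theta_k)^2$, and pass from $K_n$ to $K$ via outer regularity of $\gamma$ and $\Fav$. As your argument already shows, this even gives the stronger statement $\gamma(K)\gtrsim\E\Fav(K)$ for every realization of the rotations.
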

For recent articles making partial progress on \eqref{eq:quan-Vit}, see \cite{ct} and \cite{dabrowski-villa2025,dabrowski2024}. For more information on the Vitushkin's conjecture and analytic capacity, see Tolsa's monograph \cite{tolsa2014analytic}.

\subsection{Acknowledgments}

AC was partially supported by NSF grant DMS-2247233. DD was supported by the European Union (ERC 101219218 QPROJECT). Views and opinions expressed are however those of the authors only and do not necessarily reflect those of the European Union or the European Research Council. Neither the European Union nor the granting authority can be held responsible for them.

This project started at the \emph{GMTPyrenees} workshop in June 2025 in Sant Pau de Seg\'uries, Catalonia. The workshop was funded by the European Union MSCA Individual Fellowship QuReViMal, no.\ 101108515. The authors thank the workshop organizers for the idyllic work environment, and Giovanni Alberti for helpful comments. LLMs were used to generate some of the figures and for proofreading.

\section{Definitions and basic properties}
\label{section:definition}

\subsection{Discs}

Throughout this paper, all discs will be assumed to be closed and nondegenerate. For $x \in \R^2$ and $r > 0$, let $B(x,r)$ be the closed disc centered at $x$ of radius $r$.

For $r \in (0,1)$ and $\omega \in [0,2\pi]$, we define $P(r,\omega) \subset B(0,1)$ to be the ``peripheral'' sub-disc of radius $r$ that is internally tangent to $\partial B(0,1)$ at $e^{i\omega}$, i.e.,
\begin{align}
P(r,\omega) = B((1-r)e^{i\omega},r).
\end{align}

For any disc $D \subset \R^2$, let $\discmap{D} : \R^2 \to \R^2$ be the unique map of the form $z \mapsto rz + v$ ($r>0, v \in \R^2$) such that $\discmap{D}$ maps $B(0,1)$ onto $D$.  We define a binary operation on the set of all discs as follows:
\begin{align}
\discmap{D_1 \circ D_2} = \discmap{D_1} \circ \discmap{D_2}
\qquad
\text{or equivalently,}
\qquad
D_1 \circ D_2 = \discmap{D_1}(D_2)
.
\end{align}
See \Cref{figure:disc operation}.

\begin{figure}[h]
\centering
\includegraphics[page=1,width=0.35\textwidth]{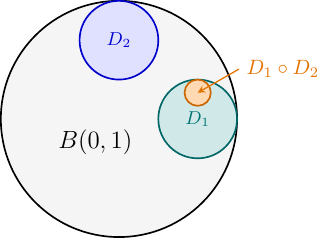}
\caption{In this example, $D_1 = P(1/3,0)$ and $D_2 = P(1/3,\pi/2)$}
\label{figure:disc operation}
\end{figure}

\begin{remark}
The binary operation defined above is in fact a group operation, and the group is isomorphic to the matrix group
\[
\left\{
\begin{pmatrix}
r & 0 & x 
\\
0 & r & y
\\
0 & 0 & 1
\end{pmatrix}
:
r > 0, x \in \R, y \in \R
\right\}.
\]
\end{remark}

\subsection{The Cantor set}

Let $[4] = \{0,1,2,3\}$. Let $[4]^* = \bigsqcup_{k=0}^\infty [4]^k$ denote the set of all finite sequences in $[4]$. The elements of $[4]^*$ can be arranged naturally in a tree structure with the empty sequence $\varnothing$ as the root. For a sequence $\ii = i_1 i_2 \cdots i_k \in [4]^k$, we denote its length by $|\ii|$ and define it to be $k$, and we denote its children by $\ii j$, $j \in [4]$. If $k \geq 1$, we denote the parent of $\ii$ by $\ii'$ and define it to be $i_1 i_2 \cdots i_{k-1} \in [4]^{k-1}$.

Given an admissible sequence of radii $(r_n)_{n\ge 0}$, and a sequence of angles $(\omega_\ii)_{\ii \in [4]^*}$ in $[0,2\pi]$, we define a disc-like Cantor set $K = \bigcap_{n=0}^\infty K_n$ as follows.

First, for each $\ii \in [4]^*$ and $j \in [4]$, we define the peripheral disc
\begin{align}
\label{eq:def P_ij}
P_{\ii j} = P\left(\frac{r_{|\ii|+1}}{r_{|\ii|}}, \omega_{\ii} + \frac{2\pi j}{4}\right)
.
\end{align}
Then we define a tree of discs $(D_\ii)_{\ii \in [4]^*}$ recursively by
\begin{align*}
D_\varnothing &= B(0,r_0)
\\
D_{\ii j}
&= 
D_\ii 
\circ 
P_{\ii j}
.
\end{align*} 
Equivalently, for all $\ii \in [4]^*$,
\begin{align}
\label{eq:def D_i non-recursive}
D_\ii
=
B(0,r_0) 
\circ 
P_{i_1}
\circ
P_{i_1i_2}
\circ
\cdots
\circ 
P_\ii
.
\end{align}
See \Cref{figure:def D_i}.

\begin{figure}[h]
\centering
\includegraphics[page=1,width=0.35\textwidth]{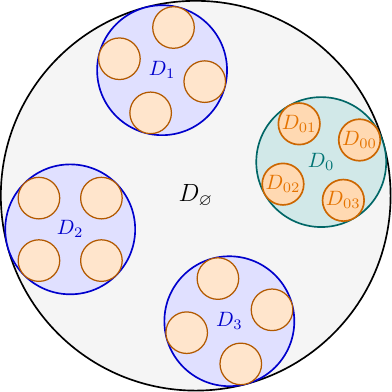}
\qquad
\includegraphics[page=2,width=0.35\textwidth]{d_i1.pdf}
\caption{Illustration of $D_{0 j} = D_0 \circ P_{0 j}$}
\label{figure:def D_i}
\end{figure}

Observe the following properties:
\begin{enumerate}
\item 
\textbf{Nestedness:} $D_\ii \subset D_{\ii'}$.
\item 
\textbf{Radii:} The radius of $D_\ii$ is $r_{|\ii|}$.
\item 
\textbf{Dependence on parameters:} The set of discs $\{D_\ii\}_{|\ii| \leq k}$ up to generation $k$ only depends on the values  $(\omega_\ii)_{|\ii| < k}$ and $(r_j)_{j \leq k}$. 
\item 
\textbf{Separation:} For any distinct $\ii,\ii'\in[4]^k$, we have
\begin{equation}\label{eq:separation}
    \dist(D_{\ii}, D_{\ii'})\ge \frac{r_{k-1}}5.
\end{equation}
This follows from admissibility (and is in fact the only way that admissibility is used).
\end{enumerate}

We define $\cD_n = \{D_\ii : |\ii| = n\}$ and $K_n = \cup \cD_n$. Then the nestedness of the discs implies $K_0 \supset K_1 \supset K_2 \supset \cdots$, and we define our Cantor set to be $K = \bigcap_{n=0}^\infty K_n$.

\subsection{The random models}
\label{section:random models}

In our random models, the sequence of radii $(r_k)_k$ is fixed, and the randomness comes only from the angles $\omega_\ii$. We consider two models:

\begin{itemize}
    \item \textbf{Synchronized rotations model}: Let $\omega_0, \omega_1, \omega_2, \ldots$ be IID uniform random variables in $[0,2\pi]$. Then we let $\omega_\ii = \omega_{|\ii|}$.
    \item \textbf{Independent rotations model}: All the angles $(\omega_\ii)_{\ii \in [4]^*}$ are independent uniform random variables in $[0,2\pi]$.
\end{itemize}

\subsection{$K_n$ vs $K(r)$}
\label{section:K_n vs K(r)}

In this section, we record some connections between the sets $K_n$ and $K(r)$. No randomness is used. Recall that $K(r)$ denotes the closed $r$-neighborhood of $K$.

One heuristic, which can already seen in \Cref{thm:main}, is that $K(r)$ is ``roughly the same'' as $K_n$ when $r = r_n$. For intermediate scales $r \in [r_n,r_{n-1}]$, some ``interpolation'' is needed. This is done with an auxiliary Cantor set $\wt K_0 \supset \wt K_1 \supset \cdots \supset \wt K_n$ defined via an auxiliary admissible sequence of radii $\wt r_0 \geq \wt r_1 \geq \cdots \geq \wt r_n$. We make these ideas precise below.

\begin{lemma}[Upper bound on $K(r)$]
\label{lemma:K(r) upper bound K_n}
Fix an admissible sequence $r_0 \geq r_1 \geq r_2 \geq \cdots$, and a sequence of angles $(\omega_\ii)_{\ii \in [4]^*}$. Fix $r > 0$. Let $n$ be such that $r \in [r_n,r_{n-1}]$. We let $\wt K_n$ be defined using the angles $(\omega_\ii)_{\ii}$ from above, but with the modified admissible radii $(\wt r_0, \ldots, \wt r_n)$, where
\[
\wt r_0 = r_0, \qquad \ldots, \qquad \wt r_{n-1} = r_{n-1},
\qquad 
\wt r_n
=
\min\left(r, \frac{1}{3}r_{n-1}\right).
\]
Then $\Fav K(r) \lesssim \Fav \wt K_n$
\end{lemma}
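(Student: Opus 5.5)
The plan is to show that $K(r)$ is contained in a bounded number of rescaled, translated copies of $\wt K_n$, and then to use the fact that Favard length is subadditive and scales linearly under dilation. A cleaner alternative, which I will try first, is to compare projections directly: for every direction $\theta$ we aim to show $|\proj_\theta(K(r))| \lesssim |\proj_\theta(\wt K_n)|$. Since $K \subset K_n$, every point of $K$ lies in some disc $D_\ii$ with $|\ii| = n$. Each such disc has radius $r_n \le r$ and sits inside $D_{\ii'}$ at position $P(r_n/r_{n-1}, \omega_{\ii'} + 2\pi j/4)$. The corresponding disc $\wt D_\ii$ of $\wt K_n$ is the concentric-type disc $D_{\ii'} \circ P(\wt r_n/r_{n-1}, \omega_{\ii'}+2\pi j/4)$, of radius $\wt r_n$, which is internally tangent to $\partial D_{\ii'}$ at the same boundary point. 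The generations $0,\dots,n-1$ coincide for the two constructions, because the angles and the radii $r_0,\dots,r_{n-1}$ are the same.

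The first key step is a geometric containment. The point of tangency, the center of $D_\ii$ and the center of $\wt D_\ii$ all lie on the same ray from the center of $D_{\ii'}$. Any point of $D_\ii$ is therefore within distance $2r_n$ of the tangency point $p$. On the other side, $\wt D_\ii$ contains $p$ and has radius $\wt r_n \ge \min(r, r_{n-1}/3) \ge r_n$, using $r \ge r_n$ and $r_n \le r_{n-1}/3$ from admissibility. So $D_\ii \subset \wt D_\ii$ (both discs are internally tangent at $p$ and $D_\ii$ is the smaller one). Hence $K(r) \subset K_n(r) \subset \bigcup_\ii \wt D_\ii(r)$. Now $\wt D_\ii(r)$ is the disc with the same center as $\wt D_\ii$ and radius $\wt r_n + r$. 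Since $\wt r_n \ge \min(r, r_{n-1}/3)$ and $r \le r_{n-1}$, we get $r \le 3\wt r_n$, so the radius $\wt r_n + r$ is at most $4\wt r_n$. Therefore $K(r)\subset \bigcup_{|\ii|=n} 4\wt D_\ii$, where $4D$ denotes the concentric dilate.

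The second step, which I expect to be the main obstacle, is showing that replacing discs by their fourfold concentric dilates increases each projection by at most a constant factor. For a single direction $\theta$, $\proj_\theta(\wt K_n)$ is a union of intervals $I_\ii$ of length $2\wt r_n$, and $\proj_\theta(4\wt D_\ii) = 4I_\ii$, the interval with the same center and four times the length. The elementary fact $|\bigcup 4I_\ii| \le 4|\bigcup I_\ii|$ for finite families of intervals then does the job. To prove it, split $\bigcup I_\ii$ into connected components $J$. Each $4I_\ii$ with $I_\ii\subset J$ lies in $J$ enlarged by $3|I_\ii|/2 \le 3|J|/2$ on each side, so it is contained in $4J$. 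Hence $|\bigcup_\ii 4I_\ii| \le \sum_J |4J| = 4\sum_J |J|$. Integrating over $\theta\in[0,\pi]$ gives $\Fav K(r) \le 4\Fav \wt K_n$.

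Finally, I need to check that $(\wt r_0,\dots,\wt r_n)$ is admissible, so that $\wt K_n$ is a legitimate Cantor set of the type considered. This holds because $\wt r_n \le r_{n-1}/3$ by construction, and the earlier radii are unchanged. It also covers the convention $r_{-1}=\infty$ when $n=0$, where $\wt r_0 = r$ and the containment $K(r)\subset B(0, r_0 + r) \subset 4\wt D_\varnothing$ is immediate since $r \ge r_0$.
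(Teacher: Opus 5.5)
Your proof is correct and takes essentially the same route as the paper. Both arguments use $K\subset K_n\subset \wt K_n$ (since $\wt r_n\ge r_n$) and then $r\le 3\wt r_n$ to show that the $r$-neighbourhood of $\proj_\theta \wt K_n$ has measure comparable to $|\proj_\theta \wt K_n|$; the paper counts components of length $\ge \wt r_n\ge r/3$ and adds $2r$ per component, whereas you first absorb the neighbourhood into the dilates $4\wt D_\ii$ and then apply $|\bigcup 4I_\ii|\le 4|\bigcup I_\ii|$ via the same component decomposition.
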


\begin{proof}
Since $\wt r_n \geq r_n$, we have $K \subset K_n \subset \wt K_n$, so
\begin{align}
\label{eq:Fav K(r) first step}
    K(r) \subset \wt K_n(r).
\end{align}

For fixed $\theta$, $\proj_{\theta} \wt K_n$ is a union of disjoint intervals of length $\ge \wt r_n \ge \frac{1}{3}r$. It follows that there are $\le (\frac{1}{3}r)^{-1}|\proj_{\theta} \wt K_n|$ such intervals. Observe that for any set $A\subset\R$ which is a union of $N$ intervals and any $\delta>0$, we have $|A(\delta)\setminus A|\le 2 N \delta$. Applying this observation with $A = \proj_\theta \wt K_n$ and $\delta = r$ gives 
$
|(\proj_\theta \wt K_n)(r)|
\lesssim 
|\proj_\theta \wt K_n|
.
$
Integrating in $\theta$,
\begin{align}
\label{eq:Fav tilde K n+1 r ineq}
    \Fav \wt K_n(r)
    \lesssim
    \Fav \wt K_n
\end{align}
The lemma follows by combining \eqref{eq:Fav K(r) first step} and \eqref{eq:Fav tilde K n+1 r ineq},
\end{proof}

\begin{lemma}[Lower bound on $K(r)$]
    \label{lemma:K_n in K(r)}
    Fix an admissible sequence $r_0 \geq r_1 \geq r_2 \geq \cdots$, and a sequence of angles $(\omega_\ii)_{\ii \in [4]^*}$. Fix $r > 0$. Let $n$ be such that $r \in [2r_n, 2r_{n-1}]$. We let $\wt K_n$ be defined using the angles $(\omega_\ii)_{\ii}$ from above, but with the modified admissible radii $(\wt r_0, \ldots, \wt r_n)$, where
    \[
    \wt r_0 = r_0, \qquad \ldots, \qquad \wt r_{n-1} = r_{n-1},
    \qquad 
    \wt r_n
    =
    \min\left(\frac{1}{2}r, \frac{1}{3}r_{n-1}\right).
    \]
    Then $\wt K_n \subset K(r)$.
\end{lemma}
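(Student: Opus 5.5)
The plan is to show that every point of $\wt K_n$ lies within distance $r$ of some point of $K$. Since $\wt K_n$ is the union of the discs $\wt D_\ii$, $|\ii| = n$, it suffices to treat a single disc. The construction of the first $n-1$ generations is unchanged, so $\wt D_\ii$ and $D_\ii$ have the same parent $D_{\ii'}$ and the same angle $\omega_{\ii'}$. Both are peripheral sub-discs of $D_{\ii'}$ tangent at the same boundary point, namely $c_{\ii'} + r_{n-1}e^{i(\omega_{\ii'}+2\pi i_n/4)}$, where $c_{\ii'}$ is the center of $D_{\ii'}$. Call this common tangency point $p$. By definition $P(s,\omega)=B((1-s)e^{i\omega},s)$ is tangent to the unit circle at $e^{i\omega}$ for every $s$, so both discs contain $p$ on their boundary.

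To locate a point of $K$ near $\wt D_\ii$, I would note that $K\cap D_\ii\neq\emptyset$: the sets $K_m\cap D_\ii$ are nested, compact and nonempty, since each disc has four children inside it. In the degenerate case $r_m=0$ the discs become points, and the same argument still applies. Pick any $y\in K\cap D_\ii$. Both $y$ and $p$ lie in $D_\ii$, so $|y-p|\le 2r_n\le r$, using the hypothesis $r\ge 2r_n$.

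Next, any $x\in\wt D_\ii$ satisfies $|x-p|\le 2\wt r_n\le r$, since $\wt D_\ii$ has radius $\wt r_n \le r/2$ and contains $p$. Combining the two estimates gives $|x-y|\le 2r$. This is off by a factor of two, and fixing it is the main step.

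To improve the constant I would use the geometry: both discs are tangent internally at $p$ and lie along the same ray from $p$ toward $c_{\ii'}$. If $\wt r_n\ge r_n$, then $D_\ii\subset\wt D_\ii$ and the center $\wt c$ of $\wt D_\ii$ lies on the segment from $p$ through $c_\ii$. In that case I would try to choose $y$ on the far side of $D_\ii$ from $p$, but $y$ is not at my disposal. Instead I would bound $\dist(x,D_\ii)$ directly. For $x\in\wt D_\ii$,
\[
\dist(x, D_\ii)\le |x-\wt c|+\dist(\wt c, D_\ii)\le \wt r_n+\max(0,\,|\wt c-c_\ii|-r_n).
\]
Here $|\wt c-c_\ii|=|\wt r_n-r_n|$, so the right-hand side is at most $\wt r_n + \max(0,\wt r_n - 2r_n)\le \max(\wt r_n, 2\wt r_n-2r_n)$, which is $\le r/2$ in either case. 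Every point of $D_\ii$ is then within $2r_n$ of $y$. That gives $r/2+2r_n$, which is still not $r$ in general.

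So the cleaner route is to use the whole subtree. $K\cap D_\ii$ is $\delta$-dense in $D_\ii$ at scale comparable to $r_n$, in the sense that every point of $D_\ii$ is within $r_n$ of $K$. This holds because each disc's children are tangent to its boundary, so the four points of $K$ near the four tangency points together with recursion cover $D_\ii$ within radius $r_n$. Combining this with $\dist(x,D_\ii)\le r/2$ and $r_n\le r/2$ yields $\dist(x,K)\le r$, as required.

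The density claim, that $\sup_{z\in D}\dist(z,K\cap D)\le r_D$ for a disc $D$ of radius $r_D$, is where I expect the real work. I would first try to prove it by checking that the center of $D$ is at distance exactly $r_D$ from its boundary, which contains the tangency points $p_j$ of the children, and that each $p_j$ is a limit of points of $K$ obtained by always taking the child tangent at the same direction. If the constant does not come out exactly, I would fall back on adjusting the comparison with $\wt r_n$.
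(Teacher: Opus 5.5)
\textbf{Gap: the final chain of inequalities does not hold as written.}

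First, the bound $\dist(x,D_\ii)\le r/2$ for $x\in\wt D_\ii$ is false. When $r_n\le \wt r_n$ the two discs are internally tangent at $p$. The point of $\wt D_\ii$ diametrically opposite $p$ lies at distance exactly $2\wt r_n-2r_n$ from $D_\ii$, and this is the maximum over $\wt D_\ii$. Your step ``$\max(\wt r_n,2\wt r_n-2r_n)\le r/2$'' therefore needs $2\wt r_n-2r_n\le r/2$. This fails, for example, when $r=r_{n-1}/2$ (so $\wt r_n=r/2$) and $r_n\ll r$: the distance is then $r-2r_n$, close to $r$.

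Second, the density claim $\sup_{z\in D}\dist(z,K\cap D)\le r_D$ is never proved, and your sketch does not work in this model. Because of the rotations, the tangency point $p_j$ of a child is in general not the tangency point of any grandchild. So you cannot keep ``taking the child tangent at the same direction,'' and in general $p_j\notin K$; since $K$ is closed, a limit of points of $K$ would itself lie in $K$. Even granting $p_j\in K$, this would only place the center of $D$ within $r_D$ of $K$, not every point of $D$.

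\textbf{The missing observation is one you wrote down conditionally and then dropped.} The inequality $\wt r_n\ge r_n$ always holds: $r_n\le r/2$ by the hypothesis $r\ge 2r_n$, and $r_n\le r_{n-1}/3$ by admissibility. Hence $D_\ii\subset\wt D_\ii$, so any $y\in K\cap D_\ii$ lies in $\wt D_\ii$. Since $\wt D_\ii$ has diameter $2\wt r_n\le r$, you get $\wt D_\ii\subset B(y,r)\subset K(r)$. Where $y$ sits inside $\wt D_\ii$ is irrelevant, so your concern that ``$y$ is not at my disposal'' is unfounded. This two-line argument is exactly the paper's proof.

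Your own route would also close without any density claim once the distance is computed correctly. The bound $\dist(x,D_\ii)\le 2\wt r_n-2r_n$ together with $\operatorname{diam}D_\ii=2r_n$ gives $\dist(x,K)\le 2\wt r_n\le r$ directly.
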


\begin{proof}
Since $\wt K_n$ is the union of discs $\{\wt D_\ii\}_{\ii \in [4]^n}$, it suffices to show $\wt D_\ii \subset \overline{K(r)}$ for all $\ii \in [4]^n$. Fix $\ii \in [4]^n$. Since  $r \geq 2r_n$ and the sequence $(r_k)$ is admissible, we have
\[
\text{radius of $D_\ii$} = r_n \leq \wt r_n = \text{radius of $\wt D_\ii$}.
\]
Thus, $D_\ii \subset \wt D_\ii$. (Here, we also use the facts that $K_{n-1} = \wt K_{n-1}$ and that the same angles $(\omega_\jj)$ are used to define both $D_\ii$ and $\wt D_\ii$.) By construction $D_\ii \cap K \neq \emptyset$, so let $x \in D_\ii \cap K$. Then $x \in D_\ii \subset \wt D_\ii$, so $\wt D_\ii \subset B(x,r) \subset \overline{K(r)}$, as desired.
\end{proof}

\section{The lower bound in \Cref{thm:main} via \Cref{lem:cap-lower-bound}}\label{sec:lower}

In this section, we prove \Cref{lem:cap-lower-bound}. Let $\mu_n$ be the natural probability measure on $K_n$:
\begin{equation}
\label{eq:def m_n}
    \mu_n\coloneqq \frac{1}{\mathcal{L}^2(K_n)}\mathcal{L}^2|_{K_n} = \frac{1}{4^n \pi r_n^2}\mathcal{L}^2|_{K_n},
\end{equation}
so that $\mu_n(D_{\ii})=4^{-k}$ for any $\ii\in [4]^k$, $0\le k\le n$. By the definition of $\Cap_1$ we have
\begin{equation*}
    \Cap_1(K_n)\ge \frac{1}{I_1(\mu_n)},
\end{equation*}
so it suffices to estimate $I_1(\mu_n)$ from above. To do this, we will bound the densities of $\mu_n$. Given a measure $\nu$, $x\in\spt \nu$, and $r>0$, the $1$-dimensional density of $\nu$ is defined as
\begin{equation*}
    \theta_\nu(x,r)\coloneqq \frac{\nu(B(x,r))}{r}.
\end{equation*}
Note that
\begin{multline}\label{eq:ener-density}
    I_1(\nu)=\iint \frac{d\nu(x)d\nu(y)}{|x-y|}  = \iiint_{|x-y|}^\infty \frac{dr}{r^2}\, d\nu(x)d\nu(y)\\
    =\int_0^\infty \int_{x\in\R^2}\int_{y\in B(x,r)} r^{-1} d\nu(y)d\nu(x)\frac{dr}{r}=\iint_0^\infty\theta_\nu(x,r)\, \frac{dr}{r}d\nu(x).
\end{multline}
Thus, estimates on $\theta_{\mu_n}(x,r)$ imply estimates on $I_1(\mu_n)$.

\begin{lemma}[Density estimates]\label{lem:dens-est}
    For any $k\in \{0, \dots, n\}$ we have
    \begin{align}
        \label{eq:densities}
        \theta_{\mu_n}(x,r)\approx \frac{1}{4^{k}r} \qquad \text{for all $x\in K_n$ and $r \in [r_{k}, r_{k-1}]$}
    \end{align}
    (For $k=0$, recall the convention $r_{-1} = \infty$; see \Cref{definition:admissible}.) Also,
    \begin{align}
        \label{eq:densities small r}
        \theta_{\mu_n}(x,r)\approx \frac{r}{4^{n}r_n^2} \qquad \text{for all $x\in K_n$ and $r \in (0, r_{n}]$}
    \end{align}
\end{lemma}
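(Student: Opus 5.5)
The plan is to count how many generation-$k$ discs meet $B(x,r)$ and to use the separation property \eqref{eq:separation} together with $\mu_n(D_\ii)=4^{-k}$ for $|\ii|=k$. Fix $x\in K_n$ and, for each $k\le n$, let $D^{(k)}(x)$ be the unique disc of $\cD_k$ containing $x$. Two facts drive everything. First, $D^{(k)}(x)\subset B(x,2r_k)$, since $x\in D^{(k)}(x)$ and the radius is $r_k$. Second, any other disc $D\in\cD_k$ with the same parent has distance at least $r_{k-1}/5$ from $D^{(k)}(x)$, and discs of $\cD_k$ with a different parent lie in a different generation-$(k-1)$ disc, hence at distance $\ge r_{k-2}/5\ge r_{k-1}/5$. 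So $B(x,r)$ with $r<r_{k-1}/5$ meets only the generation-$k$ disc $D^{(k)}(x)$, giving $\mu_n(B(x,r))\le 4^{-k}$.

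For \eqref{eq:densities} with $r\in[r_k,r_{k-1}]$, I would do the following.

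\emph{Lower bound.} $B(x,r)\supset B(x,r_k)$. Rather than proving directly that this ball captures a fixed fraction of $D^{(k)}(x)$, I would use the generation-$(k+1)$ disc: $D^{(k+1)}(x)\subset B(x,2r_{k+1})\subset B(x,r_k)$, since $r_{k+1}\le r_k/3$. Hence $\mu_n(B(x,r))\ge 4^{-k-1}$. When $k=n$ there is no generation $n+1$; in that case I use the small-$r$ estimate \eqref{eq:densities small r} at $r=r_n$ instead. For $r\ge 2r_k$ the bound is immediate, since $B(x,r)\supset D^{(k)}(x)$. Either way $\theta\gtrsim 1/(4^k r)$.

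\emph{Upper bound.} Split into two ranges. If $r\le r_{k-1}/10$, the counting above gives $\mu_n(B(x,r))\le 4^{-k}$. If $r\in[r_{k-1}/10,r_{k-1}]$, the trivial bound $\mu_n(B(x,r))\le\mu_n(D^{(k-1)}(x))+(\text{other discs})$ would need care. It is simpler to note that $B(x,r)$ with $r\le r_{k-1}<r_{k-2}/5$ meets only $D^{(k-1)}(x)$ among $\cD_{k-1}$. Therefore $\mu_n(B(x,r))\le 4^{-(k-1)}=4\cdot4^{-k}$, which is still $\lesssim 4^{-k}$. The case $k=0$ is trivial since $\mu_n$ is a probability measure. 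The index bookkeeping for $k\le1$ uses $r_{-1}=\infty$.

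For \eqref{eq:densities small r} with $r\le r_n$, I would use that $\mu_n$ restricted to $D^{(n)}(x)$ is normalized Lebesgue measure with density $(4^n\pi r_n^2)^{-1}$. Since $r\le r_n<r_{n-1}/5$, only this disc meets $B(x,r)$. So $\mu_n(B(x,r))=|B(x,r)\cap D^{(n)}(x)|/(4^n\pi r_n^2)$.

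The one genuinely geometric step, and the main (mild) obstacle, is showing $|B(x,r)\cap D|\gtrsim r^2$ for a disc $D$ of radius $r_n\ge r$ with $x\in D$, even when $x$ is on the boundary. To handle it, take the disc of radius $r/2$ centered on the segment from $x$ toward the center of $D$, at distance $r/2$ from $x$. It lies in both $D$ (because $r/2\le r_n$) and $B(x,r)$. This gives $\gtrsim r^2$, and the upper bound $\le\pi r^2$ is trivial, so $\theta\approx r/(4^nr_n^2)$.
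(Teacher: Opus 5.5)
Your lower bounds and small-$r$ estimate match the paper's. The gap is in the upper bound for $r\in[r_{k-1}/10,r_{k-1}]$. There you use $r_{k-1}<r_{k-2}/5$, but admissibility \eqref{eq:admissible} only gives $r_{k-1}\le r_{k-2}/3$, and $\frac13>\frac15$.

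The conclusion you draw is false in general: a ball $B(x,r)$ with $r\le r_{k-1}$ need not meet only one disc of $\cD_{k-1}$. Take $r_{k-1}=r_{k-2}/3$. Two adjacent children of a generation-$(k-2)$ disc are then at distance $\frac{2\sqrt2-2}{3}r_{k-2}\approx 0.28\,r_{k-2}<r_{k-1}$. Let $x$ be the point of one child closest to its sibling. Choose the rotations so that, along one branch, every descendant is internally tangent at $x$; this is allowed, since the lemma is for arbitrary rotations. Then $x\in K\subset K_n$, yet $B(x,r_{k-1})$ meets the sibling. So the separation \eqref{eq:separation} is too weak to isolate the ball one generation up.

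The fix is the paper's argument: go up two generations. For $k\ge 3$ and $r\le r_{k-1}$, admissibility gives $r\le r_{k-3}/9<r_{k-3}/5$. By \eqref{eq:separation}, $B(x,r)$ then meets only $D^{(k-2)}(x)$ among the discs of $\cD_{k-2}$. Hence $\mu_n(B(x,r))\le 4^{-k+2}$ for every $r\in[r_k,r_{k-1}]$ at once, with no need to split into two ranges. For $k\le 2$ the bound is trivial, since $\mu_n$ is a probability measure. A packing count also works: the discs of $\cD_{k-1}$ meeting $B(x,r_{k-1})$ are pairwise disjoint, have radius $r_{k-1}$, and lie in $B(x,3r_{k-1})$, so there are at most $9$ of them.

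The same invalid inequality, $r_n<r_{n-1}/5$, appears in your small-$r$ paragraph, but there it is harmless. Write $\mu_n(B(x,r))=\cL^2(B(x,r)\cap K_n)/(4^n\pi r_n^2)$. Bound it below using the disc of radius $r/2$ inside $D^{(n)}(x)\cap B(x,r)$, and above by $\pi r^2/(4^n\pi r_n^2)$, exactly as the paper does.
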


\begin{proof}
    We start by proving \eqref{eq:densities small r}. Since $K_n$ is a union of discs of radius $r_n$, it follows that for any $x \in K_n$ and $r \in (0, r_n]$, the intersection $B(x,r) \cap K_n$ contains a disc of radius $r/2$. Thus, we have
    \begin{equation}\label{eq:measure lower bound small r}
        \mu_n(B(x,r)) 
        \stackrel{\eqref{eq:def m_n}}{=}
        \frac{\cL^2(B(x,r) \cap K_n)}{4^n \pi r_n^2}
        \geq \frac{1}{4} \frac{r^2}{4^nr_n^2}
        \qquad\text{for all $x \in K_n$ and $r \in (0,r_n]$}.
    \end{equation}
    If we combine this with the trivial upper bound $\cL^2(B(x,r) \cap K_n) \leq \cL^2(B(x,r)) = \pi r^2$, then we obtain \eqref{eq:densities small r}.

    It remains to prove \eqref{eq:densities}. To do this, it suffices to show
    \begin{equation}\label{eq:dens1}
        4^{-k-1}\le\mu_n(B(x,r))\le 4^{-k+2}\qquad \text{for all $x\in K_n$ and $r \in [r_{k}, r_{k-1}]$}.
    \end{equation}
    
    We start with the upper bound in \eqref{eq:dens1}.  For $k \in \{0,1,2\}$, the estimate is trivial because $\mu_n$ is a probability measure. So suppose $k \in \{3, \ldots, n\}$ and fix $x\in K_n$ and $r \in [r_{k}, r_{k-1}]$. Let $D_\ii$ be the unique disc with $|\ii|=k-2$ containing $x$. For any other disc $D_{\ii'}$ with $|\ii'|=k-2$, we have
    \begin{equation*}
        \dist(D_\ii, D_{\ii'})
        \stackrel{ \eqref{eq:separation}}{\ge} \frac{r_{k-3}}5\ge  \frac{3^2 r_{k-1}}{5}>r_{k-1} \geq r,
    \end{equation*}
    which shows that $B(x,r) \cap D_{\ii'} = \emptyset$.
    Consequently,
    \begin{equation*}
        \mu_n(B(x,r)) = \mu_n(B(x,r)\cap D_\ii)\le\mu_n(D_\ii)=4^{-k+2}.
    \end{equation*}

    We move to the lower bound in \eqref{eq:dens1}. For $k=n$, we have, for $x \in K_n$ and $r \in [r_n, r_{n-1}]$,
    \[
    \mu_n(B(x,r))
    \geq
    \mu_n(B(x,r_n))
    \stackrel{\eqref{eq:measure lower bound small r}}{\geq} 
    4^{-n-1}.
    \]
    Suppose $k \in \{0, \ldots, n-1\}$. Let $x \in K_n, r\in [r_k,r_{k-1}]$, and let $D_\ii$ be the disc containing $x$ with $|\ii|=k+1$. Since $r\ge r_{k}\ge 3r_{k+1}$ we have $D_{\ii}\subset B(x,2r_{k+1}) \subset B(x,r)$, so that $\mu_n(B(x,r))\ge \mu_n(D_\ii)=4^{-k-1}.$
\end{proof}

\begin{lemma}\label{lem:energy-est}
    We have
    \begin{equation*}
        I_1(\mu_n) \approx \sum_{k=0}^{n} \frac{1}{4^k r_k} 
        .
    \end{equation*}
\end{lemma}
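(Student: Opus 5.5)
We plan to combine the energy identity \eqref{eq:ener-density} with the density estimates of \Cref{lem:dens-est}. By \eqref{eq:ener-density},
\[
I_1(\mu_n)=\int \int_0^\infty \theta_{\mu_n}(x,r)\,\frac{dr}{r}\,d\mu_n(x).
\]
Because $\mu_n$ is supported on $K_n$, we only need to control the inner integral for $x\in K_n$. We will show that this inner integral is comparable to $\sum_{k=0}^n 4^{-k}r_k^{-1}$ uniformly in $x\in K_n$. Integrating against the probability measure $\mu_n$ then gives the claim.

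Fix $x\in K_n$ and split $(0,\infty)$ into the intervals $(0,r_n]$ and $[r_k,r_{k-1}]$ for $k=0,\dots,n$, with $r_{-1}=\infty$.

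\textbf{Small scales.} On $(0,r_n]$, \eqref{eq:densities small r} gives
\[
\int_0^{r_n}\theta_{\mu_n}(x,r)\,\frac{dr}{r}\approx \int_0^{r_n}\frac{dr}{4^nr_n^2}=\frac{1}{4^nr_n}.
\]

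\textbf{Intermediate scales.} On $[r_k,r_{k-1}]$, \eqref{eq:densities} gives
\[
\int_{r_k}^{r_{k-1}}\theta_{\mu_n}(x,r)\,\frac{dr}{r}\approx \frac{1}{4^k}\int_{r_k}^{r_{k-1}}\frac{dr}{r^2}=\frac{1}{4^k}\Big(\frac1{r_k}-\frac1{r_{k-1}}\Big).
\]
For $k=0$ we read $1/r_{-1}=0$. Admissibility gives $r_{k-1}\ge 3r_k$, so this quantity lies between $\frac{2}{3}\cdot\frac{1}{4^kr_k}$ and $\frac{1}{4^kr_k}$, i.e. it is $\approx 4^{-k}r_k^{-1}$.

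\textbf{Summing.} Adding the pieces for $k=0,\dots,n$ together with the small-scale piece, which duplicates the $k=n$ term up to a constant, we obtain
\[
\int_0^\infty\theta_{\mu_n}(x,r)\,\frac{dr}{r}\approx\sum_{k=0}^n\frac1{4^kr_k},
\]
with absolute constants independent of $x$.

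The only point needing care is that the implicit constants do not depend on $n$ or $k$. This holds because each interval contributes a term comparable to its own summand with uniform constants, and all terms are nonnegative, so summing comparabilities termwise is legitimate. The degenerate case $r_n=0$ does not arise here, since $\mu_n$ requires $r_n>0$. We therefore expect no real obstacle: the work was done in \Cref{lem:dens-est}, and the factor $1-r_k/r_{k-1}\ge 2/3$ from admissibility is the one place the lower bound needs attention.
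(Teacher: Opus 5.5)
Your proposal is correct and follows the paper's proof essentially step for step. Both use the energy identity \eqref{eq:ener-density} and the density estimates of \Cref{lem:dens-est} on the scale intervals $(0,r_n]$ and $[r_k,r_{k-1}]$, and both use admissibility to get $\frac{1}{r_k}-\frac{1}{r_{k-1}}\approx\frac{1}{r_k}$.
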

\begin{proof}
    In the light of \eqref{eq:ener-density} and since $\mu_n$ is a probability measure, it is enough to show that 
    \begin{equation}\label{eq:goal1}
        \int_0^\infty\theta_{\mu_n}(x,r)\, \frac{dr}{r} \approx \sum_{k=0}^{n}\frac{1}{4^k r_k}
        \qquad\text{for any $x\in K_n$}.
    \end{equation}
    Fix $x \in K_n$. We decompose
    \begin{equation*}
        \int_0^\infty\theta_{\mu_n}(x,r)\, \frac{dr}{r}
        =
        \sum_{k=0}^n \int_{r_{k}}^{r_{k-1}}\theta_{\mu_n}(x,r)\, \frac{dr}{r}
        +
        \int_0^{r_n}  \theta_{\mu_n}(x,r)\, \frac{dr}{r}
    \end{equation*}
    Using \Cref{lem:dens-est}, we compute
    \begin{equation*}
        \int_{r_{k}}^{r_{k-1}}\theta_{\mu_n}(x,r)\, \frac{dr}{r}
        \stackrel{\eqref{eq:densities}}{\approx} 
        \int_{r_{k}}^{r_{k-1}} \frac{1}{4^{k}r^2}\, dr
        =
        \frac{1}{4^{k}} \left(\frac{1}{r_{k}} - \frac{1}{r_{k-1}}\right)       
        \stackrel{\eqref{eq:admissible}}{\approx}
        \frac{1}{4^{k} r_k}
        .
    \end{equation*}
    and
    \begin{equation*}
        \int_0^{r_n} \theta_{\mu_n}(x,r)\, \frac{dr}{r}
        \stackrel{\eqref{eq:densities small r}}{\approx}  
        \int_0^{r_n} \frac{1}{4^n r_n^2} dr
        =
        \frac{1}{4^n r_n}
        .
    \end{equation*}
    Putting the estimates above together gives \eqref{eq:goal1}.
\end{proof}
\begin{proof}[Proof of Lemma \ref{lem:cap-lower-bound}]
The estimate \eqref{eq:Cap K_n lower bd} follows immediately from Lemma \ref{lem:energy-est}:
\begin{equation*}
    \Cap_1(K_n)\ge I_1(\mu_n)^{-1}\approx \left( \sum_{k=0}^{n} \frac{1}{4^k r_k}\right)^{-1}.
\end{equation*}
Concerning \eqref{eq:Cap K(r) lower bd}, note that if $2r_{n}\le r\le 2r_{n-1}$, then by \Cref{lemma:K_n in K(r)}, $K(r)\supset \wt{K}_n$, where $\wt K_n$ is the auxiliary Cantor set defined using the same rotations $(\omega_\ii)_{\ii}$ as for $K$, but with the admissible radii
$
(\wt r_0, \ldots, \wt r_n)
=
\left(
r_0, \ldots, r_{n-1}, \min\left(\frac{1}{2}r, \frac{1}{3}r_{n-1}\right)
\right).
$
This implies that 
\begin{equation*}
    \Cap_1(K(r))\ge \Cap_1(\wt{K}_n)
    \stackrel{\eqref{eq:Cap K_n lower bd}}{\gtrsim}
    \left( \sum_{k=0}^{n-1} \frac{1}{4^k r_k} + \frac{1}{4^{n} \wt r_{n}}\right)^{-1}\approx \left( \sum_{k=0}^{n-1} \frac{1}{4^k r_k} + \frac{1}{4^{n} r}\right)^{-1}.
\end{equation*}
Thus, we have shown
\begin{equation}
    \Cap_1(K(r)) 
    \gtrsim 
    \left( \sum_{k=0}^{n-1} \frac{1}{4^k r_k} + \frac{1}{4^{n} r}\right)^{-1} \qquad \text{if $r \in [2r_n, 2r_{n-1}]$}.
\end{equation}
which implies \eqref{eq:Cap K(r) lower bd}.
\end{proof}

\section{The upper bound in \Cref{thm:main}}\label{sec:upper-bd}

In this section, we prove the upper bounds of \eqref{eq:E Fav Kn main thm} and \eqref{eq:E Fav K(r) main thm}. Throughout this section, fix $n \in \N$ and admissible sequence $r_0 \geq r_1 \geq r_2 \geq \cdots \geq r_n$.

\subsection{Constructing the Cantor set in reverse}

We describe another inductive way to construct the set of discs $\cD_n = \{D_\ii : |\ii| = n\}$. In \Cref{section:definition}, we gave a recursive construction using a tree $\{D_\ii\}_{|\ii| \leq n}$ of discs of height $n$, starting from the root. Here, we will give a different recursive construction. The output will be $(\cS_\ii)_{|\ii| \leq n}$, where each $\cS_\ii$ is a set of discs rather than a single disc. We will start with the leaves and end with the root, and $\cS_\varnothing$ will be a scaled copy of $\cD_n$.

For each leaf $\ii$ ($|\ii|=n$), we set $\cS_\ii = \{B(0,1)\}$. Now for non-leaves, we recursively define $\cS_\ii$ in terms of its children:
\begin{align}
\label{eq:S_ii recursive relation}
\cS_\ii 
= 
\{ P_{\ii j} \circ  D 
: 
j \in [4],  D \in \cS_{\ii j}
\}
\qquad\text{for } |\ii| = k < n
.
\end{align}
See \Cref{figure:reverse construction}. Comparing this with \eqref{eq:def D_i non-recursive}, it follows that
\begin{align*}
    \cD_n = \{B(0,r_0) \circ D : D \in \cS_\varnothing\}
\end{align*}
so
\begin{align}
    \label{eq:K_n and S_varnothing}
    K_n = \cup \cD_n = \{r_0 x : x \in \cup\cS_\varnothing\} =:  r_0 (\cup \cS_\varnothing). 
\end{align}
\begin{remark}
Note that $\cS_\ii$ and $D_\ii$ are very different from each other. They are related by the following: For each $k$,
\begin{align*}
\cD_n
=
\{
B(0,r_0) \circ D_\ii \circ D
:
|\ii| = k,
D \in \cS_\ii
\}
.
\end{align*}
(This is not needed for the argument.)
\end{remark}

\begin{figure}[h]
\centering
\begin{subfigure}[c]{0.6\textwidth}
    \centering
    \includegraphics[page=1,width=0.23\textwidth]{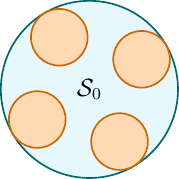}
    \includegraphics[page=2,width=0.23\textwidth]{reverse_construction1.pdf}
    \includegraphics[page=3,width=0.23\textwidth]{reverse_construction1.pdf}
    \includegraphics[page=4,width=0.23\textwidth]{reverse_construction1.pdf}
\end{subfigure}
\begin{subfigure}[c]{0.35\textwidth}
    \includegraphics[page=5,width=\textwidth]{reverse_construction1.pdf}
\end{subfigure}
\caption{Illustration of \eqref{eq:S_ii recursive relation} in the case $n=2$ and $\ii = \varnothing$.}
\label{figure:reverse construction}
\end{figure}

\subsection{Reduction to a recurrence relation}

Fix a random model (from one of the two choices; see \Cref{section:random models}). Consider the quantity $\E |\proj_\theta \cup \cS_\ii|$ for some $\theta$ and some $\ii$. By rotational symmetry, it does not depend on $\theta$. By independence and identically distributed, it only depends on $|\ii|$. Thus, we can define
\begin{align}
\label{eq:def a_k}
a_k
=
\E |\proj_\theta \cup \cS_\ii|,
\text{ where } |\ii| = k.
\end{align}
The main result of this section is the following.

\begin{lemma}
\label{lemma:derive-recurrence}
There exists a constant $A \in (0, \frac{1}{4})$ such that
\begin{align}
\label{eq:lemma derive-recurrence}
a_{k} \leq 4 \frac{r_{k+1}}{r_k} a_{k+1} - A  \left(\frac{r_{k+1}}{r_k}a_{k+1}\right)^2
\qquad
\text{for every } k = 0, \ldots, n-1
\end{align}
\end{lemma}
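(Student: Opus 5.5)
Fix $|\ii|=k<n$ and write $\rho=r_{k+1}/r_k\le 1/3$. By \eqref{eq:S_ii recursive relation}, $\cup\cS_\ii=\bigcup_{j\in[4]}P_{\ii j}\circ(\cup\cS_{\ii j})$. Each map $\discmap{P_{\ii j}}$ is $z\mapsto \rho z+(1-\rho)e^{i(\omega_\ii+2\pi j/4)}$. Hence the projection of the $j$-th piece is
\[
E_j:=\rho\,\proj_\theta(\cup\cS_{\ii j})+t_j,
\qquad t_j=(1-\rho)\cos\bigl(\omega_\ii+\tfrac{2\pi j}{4}-\theta\bigr).
\]
Set $X_j=|\proj_\theta\cup\cS_{\ii j}|$. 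Then $\E X_j=a_{k+1}$, and $|E_j|=\rho X_j$. Inclusion–exclusion gives
\[
|\proj_\theta\cup\cS_\ii|\le \sum_j|E_j|-\tfrac12\sum_{j\ne j'}|E_j\cap E_{j'}|\quad\text{(or a suitable Bonferroni-type lower bound on the overlap).}
\]
It is cleaner to keep one pair only: $|E_0\cup\cdots\cup E_3|\le\sum_j|E_j|-|E_0\cap E_1|$. The union bound term has expectation $4\rho a_{k+1}$, so the whole task is to show $\E|E_0\cap E_1|\gtrsim(\rho a_{k+1})^2$.

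To bound the overlap, write $E_0\cap E_1$ via the convolution identity:
\[
\int_\R |(A+s)\cap B|\,ds=|A||B|.
\]
The key point is that in both models the angle $\omega_\ii$ is uniform and independent of the child configurations $\cS_{\ii 0},\cS_{\ii1}$. In the independent model this is clear, since $\cS_{\ii j}$ depends only on angles strictly below $\ii$. In the synchronized model, $\omega_\ii=\omega_k$ is independent of $\omega_{k+1},\dots$, which determine the $\cS_{\ii j}$. So we condition on the children and average over $\omega_\ii$. The relative shift is
\[
t_0-t_1=(1-\rho)\bigl(\cos(\omega_\ii-\theta)+\sin(\omega_\ii-\theta)\bigr)=\sqrt2(1-\rho)\cos(\omega_\ii-\theta-\pi/4).
\]
This is an arcsine-distributed variable on $[-\sqrt2(1-\rho),\sqrt2(1-\rho)]$. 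Its density is bounded below by an absolute constant $c>0$ on that whole interval, since the arcsine density is smallest at the center.

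Both sets $\rho\proj_\theta(\cup\cS_{\ii j})$ lie in $[-\rho,\rho]$. So $|(A+s)\cap B|$ vanishes unless $|s|\le2\rho$, and $2\rho\le 2/3<\sqrt2\cdot 2/3\le\sqrt2(1-\rho)$. Thus the relevant shifts lie inside the support, where the density is at least $c$. Conditionally,
\[
\E_{\omega_\ii}|E_0\cap E_1|\ge c\int|(A+s)\cap B|\,ds=c\,\rho^2X_0X_1.
\]
This is the step where the admissibility $\rho\le1/3$ is used.

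Take the expectation over the children. In the independent model $X_0$ and $X_1$ are independent, so $\E X_0X_1=a_{k+1}^2$. In the synchronized model they are not independent, but the children configurations $\cS_{\ii0}$ and $\cS_{\ii1}$ are identically distributed copies built from the same angles. In fact, with $\omega_\jj=\omega_{|\jj|}$ they are the same set, so $X_0=X_1$ and Cauchy–Schwarz gives $\E X_0^2\ge(\E X_0)^2=a_{k+1}^2$. In either case $\E X_0X_1\ge a_{k+1}^2$, and hence
\[
a_k\le4\rho a_{k+1}-c(\rho a_{k+1})^2.
\]
If necessary, shrink $c$ to obtain $A\in(0,1/4)$; weakening the constant is harmless.

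The main obstacle is the synchronized case, where it must be checked that the child sets really coincide (or are at least positively correlated). Here they coincide by the rotation equivariance of \eqref{eq:S_ii recursive relation}, since the subtrees of $\ii0$ and $\ii1$ use identical angles. One should also check that the arcsine lower-density bound is applied on the right interval.
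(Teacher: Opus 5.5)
Your proof is correct and follows essentially the same route as the paper. The shared steps are the one-pair bound $|\bigcup_j E_j|\le\sum_j|E_j|-|E_0\cap E_1|$, conditioning on the child configurations and averaging over $\omega_\ii$, the identity $\int|(A+s)\cap B|\,ds=|A||B|$, and then independence or Jensen in the two models. Your lower bound on the arcsine density of the relative shift is exactly the paper's Jacobian bound $J(t)\ge 1/(2(1-r))$ in \Cref{lemma:change-of-variables}, with admissibility guaranteeing that all relevant shifts lie in its range. One small slip, which you never use: your first displayed inequality $|\proj_\theta\cup\cS_\ii|\le\sum_j|E_j|-\tfrac12\sum_{j\ne j'}|E_j\cap E_{j'}|$ is false in general (e.g.\ if all four sets coincide), whereas the single-pair bound you actually rely on is valid.
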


Note that the recurrence \eqref{eq:lemma derive-recurrence} starts at $k=n$ and ends at $k=0$. To prove \eqref{eq:lemma derive-recurrence}, we need the following, which is essentially a change of variables and an application of Fubini.

\begin{lemma}
\label{lemma:change-of-variables}
Fix $\alpha$ and $r > 0$ such that $P(r,0) \cap P(r,\alpha)  = \emptyset$. Then for any $E_1, E_2 \subseteq B(0,1)$ and any $\theta$,
\begin{align}
\label{eq:change-of-variables}
    \int_0^{2\pi}
    |\proj_\theta \discmap{P(r,\omega)} (E_1) \cap \proj_\theta \discmap{P(r,\omega+\alpha)} (E_2)|
    \, d\omega
    \geq \frac{r^2}{2(1-r)}
    |\proj_\theta E_1| |\proj_\theta E_2|
    \,.
\end{align}
\end{lemma}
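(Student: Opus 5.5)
The plan is to reduce everything to a one-dimensional convolution identity plus a change of variables in $\omega$. Put $A=\proj_\theta E_1$ and $B=\proj_\theta E_2$; both are subsets of $[-1,1]$. Since $\discmap{P(r,\omega)}(z)=rz+(1-r)e^{i\omega}$ and $\proj_\theta$ is linear, we have
\[
\proj_\theta \discmap{P(r,\omega)}(E_1)=rA+(1-r)\cos(\omega-\theta),\qquad \proj_\theta \discmap{P(r,\omega+\alpha)}(E_2)=rB+(1-r)\cos(\omega+\alpha-\theta).
\]
Lebesgue measure is translation invariant, so the intersection has length $g(s(\omega))$, where
\[
g(s)=|rA\cap(rB+s)|=\int \mathds{1}_{rA}(t)\,\mathds{1}_{rB}(t-s)\,dt,
\]
and the relative shift is
\[
s(\omega)=(1-r)\bigl(\cos(\omega+\alpha-\theta)-\cos(\omega-\theta)\bigr)=-R\sin(\omega-\theta+\alpha/2),\qquad R:=2(1-r)|\sin(\alpha/2)|.
\]
By Fubini, $g\ge 0$ and $\int_\R g(s)\,ds=|rA|\,|rB|=r^2|A|\,|B|$.

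The next step is to pin down where $g$ lives, and this is exactly where the hypothesis enters. Since $rA,rB\subset[-r,r]$, the function $g$ vanishes when $|s|>2r$. The number $R$ equals $(1-r)|e^{i\alpha}-1|$, which is the distance between the centers of $P(r,0)$ and $P(r,\alpha)$. These two closed discs of radius $r$ are disjoint, so $R>2r$. Hence $g$ is supported in $[-R,R]$.

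Then I change variables $s=s(\omega)$. As $\omega$ runs over $[0,2\pi]$, $s$ sweeps $[-R,R]$ twice, once on each monotone half-period. On each half-period $|ds/d\omega|=\sqrt{R^2-s^2}\le R$, so $d\omega\ge ds/R$. Keeping just one half-period (or both, for a factor $2$), we get
\[
\int_0^{2\pi} g(s(\omega))\,d\omega\ \ge\ \frac1R\int_{-R}^{R} g(s)\,ds=\frac{r^2}{R}|A|\,|B|\ \ge\ \frac{r^2}{2(1-r)}|A|\,|B|,
\]
using $R\le 2(1-r)$ in the last step. This is the claimed bound.

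The only step needing care is the support statement for $g$, since it is what allows the full mass $\int g$ to be captured by the change of variables. It follows directly from the disjointness assumption, so I do not expect a real obstacle. Measurability is unproblematic: if $E_i$ is compact (or just Borel), then $A$ and $B$ are Lebesgue measurable (analytic), and Fubini applies.
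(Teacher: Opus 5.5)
Your proof is correct and follows essentially the same route as the paper: reduce to the relative shift $s(\omega)$, use the disjointness hypothesis to place the support $[-2r,2r]$ of $s\mapsto |rA\cap(rB+s)|$ inside the range $[-R,R]$ of $s(\omega)$, bound the Jacobian using $|s'(\omega)|\le R\le 2(1-r)$, and finish with Fubini. The differences are cosmetic. You change variables on a single monotone half-period rather than using the paper's multiplicity-weighted Jacobian $J(t)$, and your formula $s(\omega)=-R\sin(\omega-\theta+\alpha/2)$ has the wrong sign when $\sin(\alpha/2)<0$, which changes neither the range nor $|s'|$.
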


\begin{proof}
By rotation symmetry, we may assume $\theta = 0$. Define $F_i = r \proj_0 E_i$.  Recalling that $\discmap{P(r,\omega)}(z)=rz+(1-r)e^{i\omega}$ we have
\begin{align*}
    &\int_0^{2\pi}
    |\proj_0 \discmap{P(r,\omega)} (E_1) \cap \proj_0 \discmap{P(r,\omega+\alpha)} (E_2)|
    \, d\omega
    \\
    &=
    \int_0^{2\pi}
    |(F_1 + (1-r)\cos\omega) \cap (F_2 + (1-r)\cos(\omega+\alpha))|
    \, d\omega
    \\
    &=
    \int_0^{2\pi}
    |F_1 \cap (F_2 + (1-r)(\cos(\omega+\alpha) - \cos\omega))|
    \, d\omega
    .
\end{align*}
Next, we will make the change of variables $t = \phi(\omega)$, where $\phi : [0,2\pi] \to \R$ is given by
\begin{align}
\label{eq:def phi(omega)}
\phi(\omega) 
= 
(1-r)(\cos(\omega+\alpha) - \cos\omega)
=
-2(1-r) \sin \frac{\alpha}{2} \sin \left(\omega + \frac{\alpha}{2}\right),
\end{align}
giving us a Jacobian factor of 
\begin{align*}
    J(t) = 
    \sum_{\omega \in [0,2\pi] : t = \phi(\omega)} \frac{1}{|\phi'(\omega)|}
    .
\end{align*}
Note that $|\phi'(\omega)| \leq 2(1-r)$ for all $\omega$, so 
\begin{align}
\label{eq:J(t) lower bound}
    J(t) \geq \frac{1}{2(1-r)}
    \qquad\text{for all }
    t \in \phi([0,2\pi])
    .
\end{align}
Furthermore, since $F_1,F_2 \subset [-r,r]$ and $P(r,0) \cap P(r,\alpha)  = \emptyset$, we have
\begin{align}
\label{eq:intersection nonempty subset image}
\begin{aligned}
    \{t \in \R : F_1 \cap (F_2 + t) \neq \emptyset\} 
    &\subset 
    [-2r,2r]
    \\
    &\subset
    [-2(1-r)|\sin\tfrac{\alpha}{2}|,2(1-r)|\sin\tfrac{\alpha}{2}|]
    \stackrel{\eqref{eq:def phi(omega)}}{=}
    \phi([0,2\pi])
    .
\end{aligned}
\end{align}
Thus, making the change of variables,
\begin{align*}
    &\int_0^{2\pi}
    |F_1 \cap (F_2 + (1-r)(\cos(\omega+\alpha) - \cos\omega))|
    \, d\omega
    \\
    &=
    \int_\R
    |F_1 \cap (F_2 + t)|
    J(t) \, dt
    \\
    &\stackrel{\eqref{eq:J(t) lower bound}}{\geq}
    \frac{1}{2(1-r)}
    \int_{\phi([0,2\pi])}
    |F_1 \cap (F_2 + t)|
    \, dt
    \\
    &
    \stackrel{\eqref{eq:intersection nonempty subset image}}{=}
    \frac{1}{2(1-r)}
    \int_\R
    |F_1 \cap (F_2 + t)|
    \, dt
    \\
    &=
    \frac{1}{2(1-r)}
    |F_1||F_2|
    .
\end{align*}
where the last equality is by Fubini's theorem. This completes the proof.
\end{proof}

\begin{proof}[Proof of \Cref{lemma:derive-recurrence}]
Let $k \in \{0,\ldots, n-1\}$. Fix some $\ii$ with $|\ii| = k$. For $j \in [4]$, define
\begin{align*}
    E_j = \cup \cS_{\ii j} \qquad\text{and}\qquad A_j = \proj_0 \discmap{P_{\ii j}}(E_{j}),
\end{align*}
where $P_{\ii j} = P(\frac{r_{k+1}}{r_{k}}, \omega_{\ii} + \frac{2\pi j}{4})$. (Recall \eqref{eq:def P_ij}.) We have
\begin{align*}
\cup
\cS_\ii
\stackrel{\eqref{eq:S_ii recursive relation}}{=}
\bigcup_{j \in [4]} 
\bigcup_{D \in \cS_{\ii j}} 
\discmap{P_{\ii j}}
(D)
=
\bigcup_{j \in [4]} 
\discmap{P_{\ii j}}
(E_{j})
,
\end{align*}
so
\begin{align}
\label{eq:proj cup S_i = cup A_j}
\proj_0
\cup
\cS_\ii
=
\bigcup_{j \in [4]} 
A_j
.
\end{align}
By the principle of inclusion--exclusion,
\begin{align}
\label{eq:cup A_j PIE}
|\bigcup_{j \in [4]} A_j|
&\leq
|A_0 \cup A_1|
+
|A_2|
+
|A_3|
=
\sum_{j \in [4]}
|A_j|
-
|A_0 \cap A_1|
\end{align}
Taking expectations, we have
\begin{align}
\label{eq:upper-bound-ak}
a_k
\stackrel{\eqref{eq:def a_k}}{=}
\E
\left|
\proj_0
\cup
\cS_\ii
\right|
\stackrel{\eqref{eq:proj cup S_i = cup A_j}}{=}
\E |\bigcup_{j \in [4]} A_j|
\stackrel{\eqref{eq:cup A_j PIE}}{\leq}
\sum_{j \in [4]}
\E|A_j|
-
\E|A_{0} \cap A_{1}|
\end{align}
Using the definition of $a_{k+1}$, we have for each $j \in [4]$,
\begin{align*}
\E|A_j|
=
\frac{r_{k+1}}{r_k} \E|\proj_0 E_j|
=
\frac{r_{k+1}}{r_k} a_{k+1}
.
\end{align*}

For the remaining term $\E|A_{0} \cap A_{1}|$, we take the expectation in two steps. First, we condition on $(\omega_\jj)_{|\jj| > k}$ and take the expectation only with respect to $\omega_\ii$. Note that $E_0$ and $E_1$ are determined by $(\omega_\jj)_{|\jj| > k}$. By \Cref{lemma:change-of-variables} (with $\alpha = \pi/2$ and $r = r_{k+1}/r_k$), 
\begin{align*}
&\E_{\omega_\ii} |A_0 \cap A_1|
\\
&=
\frac{1}{2\pi}
\int_0^{2\pi}
|\proj_0 \discmap{P(r,\omega)} (E_0) \cap \proj_0 \discmap{P(r,\omega+\alpha)} (E_1)|
\, d\omega
 \\
&\stackrel{\eqref{eq:change-of-variables}}{\gtrsim}
\left(\frac{r_{k+1}}{r_k}\right)^2
|\proj_0 E_{1}| |\proj_0 E_{2}|
\end{align*}
Now we take the expectation with respect to the remaining $\omega_\jj$. We split into two cases, depending on the random model under consideration. For the synchronized rotations model, $E_{0} = E_{1}$, so
\begin{align*}
    \E|\proj_0 E_{0}| |\proj_0 E_{1}|
    =
    \E|\proj_0 E_{0}|^2
    \geq
    (\E|\proj_0 E_{0}|)^2
    \stackrel{\eqref{eq:def a_k}}{=}
    a_{k+1}^2
    .
\end{align*}
In the independent rotations model, $E_{0}$ and $E_{1}$ are independent, so
\begin{align}
\label{eq:E_1 E_2 independent}
    \E|\proj_0 E_{0}| |\proj_0 E_{1}|
    =
    \E|\proj_0 E_{0}| \E |\proj_0 E_{1}|
    \stackrel{\eqref{eq:def a_k}}{=}
    a_{k+1}^2
    .
\end{align}
Combining the displayed equations above (starting with \eqref{eq:upper-bound-ak}) gives \eqref{eq:lemma derive-recurrence}, as desired.
\end{proof}

\subsection{Solving the recurrence relation}

\begin{proposition}\label{prop:sequence_inequality}
    Let $A > 0$. Assume that $(\delta_k)_{k\ge 0}$ and $(b_k)_{k\ge0}$ are sequences of positive numbers satisfying
    \[
    b_{k+1} \leq \delta_{k} b_{k} - A \delta_{k}^2 b_{k}^2\qquad\text{for every $k$}.
    \]
    Then
    \begin{align}
        \label{eq:b_k bound}
        b_{k}\leq \frac{I_k }{b_0^{-1} + A \sum_{j=1}^{k} I_j},
    \qquad \text{where } I_j:=\prod_{i=0}^{j-1} \delta_i.
    \end{align}
\end{proposition}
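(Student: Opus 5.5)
We normalize away the multiplicative factors and then pass to reciprocals, which turns the quadratic recurrence into an additive one. Set $c_k := b_k/I_k$, with $I_0 = 1$ (empty product), so that $c_0 = b_0$. The claimed bound \eqref{eq:b_k bound} is equivalent to
\[
c_k^{-1} \ge b_0^{-1} + A\sum_{j=1}^k I_j ,
\]
and it suffices to prove the one-step estimate $c_{k+1}^{-1} \ge c_k^{-1} + A I_{k+1}$ and then telescope.

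\textbf{Step 1: the recurrence for $c_k$.} Divide the hypothesis by $I_{k+1} = \delta_k I_k$:
\[
c_{k+1} \le \frac{\delta_k b_k - A\delta_k^2 b_k^2}{\delta_k I_k} = c_k - A\delta_k \frac{b_k^2}{I_k} = c_k - A\delta_k I_k c_k^2 = c_k\bigl(1 - A I_{k+1} c_k\bigr).
\]
Since $b_{k+1} > 0$ by assumption, the right-hand side is positive. Hence $x := A I_{k+1} c_k \in (0,1)$.

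\textbf{Step 2: take reciprocals.} Both sides are positive, so
\[
c_{k+1}^{-1} \ge c_k^{-1}(1-x)^{-1}.
\]
The elementary inequality $(1-x)^{-1} \ge 1 + x$ for $x \in [0,1)$ gives
\[
c_{k+1}^{-1} \ge c_k^{-1} + c_k^{-1}x = c_k^{-1} + A I_{k+1}.
\]

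\textbf{Step 3: telescope.} Summing the one-step estimate from $0$ to $k-1$ gives $c_k^{-1} \ge b_0^{-1} + A\sum_{j=1}^k I_j$. Multiplying back by $I_k$ yields \eqref{eq:b_k bound}.

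The only point needing care is Step 2. Passing to reciprocals requires the right-hand side $c_k(1-x)$ to be positive, and this is exactly where the positivity of $b_{k+1}$ is used: a nonpositive right-hand side would make the hypothesis incompatible with $b_{k+1} > 0$. Beyond that the argument is routine algebra, and I expect no real obstacle.
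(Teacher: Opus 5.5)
Your proof is correct, and it takes a somewhat more direct route than the paper.

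The paper's argument runs as follows:
\begin{itemize}
\item It first weakens the hypothesis to $b_{k+1}\le \delta_k b_k - A\delta_k b_k b_{k+1}$, using $b_{k+1}\le \delta_k b_k$ in the quadratic term.
\item It introduces an auxiliary sequence $(\tilde c_k)$ satisfying the corresponding \emph{equality} $\tilde c_{k+1}=\delta_k\tilde c_k - A\delta_k \tilde c_k\tilde c_{k+1}$.
\item It solves that recurrence exactly with the integrating factor $I_k$, via $I_{k+1}\tilde c_{k+1}^{-1}-I_k\tilde c_k^{-1}=AI_{k+1}$; this step is motivated by Bernoulli's ODE.
\item It shows $b_k\le \tilde c_k$ by induction, using that $t\mapsto t/(1+At)$ is increasing.
\end{itemize}

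If you rearrange the paper's weakened inequality as $b_{k+1}(1+A\delta_k b_k)\le \delta_k b_k$ and multiply the reciprocal by $I_{k+1}$, you get $I_{k+1}/b_{k+1}\ge I_k/b_k + AI_{k+1}$. That is exactly your one-step estimate $c_{k+1}^{-1}\ge c_k^{-1}+AI_{k+1}$. The paper's replacement of $\delta_k b_k$ by $b_{k+1}$ loses precisely what your inequality $(1-x)^{-1}\ge 1+x$ loses, so the two bounds are identical.

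Both arguments telescope the same quantity $I_k/b_k$. Yours does it directly on the inequality, which removes the comparison sequence and the monotonicity induction and is shorter. The paper's version isolates an exactly solvable recurrence and makes visible the ODE heuristic behind the integrating factor.
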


We will analyze a closely related recurrence, which is explicitly solvable, and then compare it to the previous one.

\begin{lemma}\label{prop:sequence_equality}
    Let $A>0$. Assume that $(\delta_k)_{k\ge 0}$ and $(c_k)_{k\ge0}$ are sequences of positive numbers satisfying
    \begin{equation}\label{eq:sequence_equality}
    c_{k+1} = \delta_{k} c_{k} - A \delta_{k} c_{k}c_{k+1}\qquad\text{for every $k$}.
    \end{equation}
    Then
    \[
    c_{k}= \frac{I_k }{c_0^{-1}+A  \sum_{j=1}^{k} I_j},
    \qquad \text{where } 
    I_j:=\prod_{i=0}^{j-1} \delta_i.
    \]
\end{lemma}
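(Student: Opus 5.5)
The plan is to linearize the recurrence by passing to reciprocals, which turns it into a first-order affine recurrence that telescopes once it is normalized by the products $I_k$.

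Since all $c_k$ and $\delta_k$ are positive, divide \eqref{eq:sequence_equality} by $\delta_k c_k c_{k+1}$. This gives
\begin{equation*}
\frac{1}{\delta_k c_k} = \frac{1}{c_{k+1}} + A,
\qquad\text{i.e.}\qquad
\frac{1}{c_{k+1}} = \frac{1}{\delta_k c_k} - A .
\end{equation*}
That sign looks wrong for the claimed formula, so I recheck the algebra. Dividing $c_{k+1} = \delta_k c_k - A\delta_k c_k c_{k+1}$ by $\delta_k c_k c_{k+1}$ gives $\frac{1}{\delta_k c_k} = \frac{1}{c_{k+1}} - A$. Hence
\begin{equation*}
\frac{1}{c_{k+1}} = \frac{1}{\delta_k c_k} + A .
\end{equation*}

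Next, set $u_k := I_k / c_k$, using $I_0 = 1$ (the empty product) and $I_{k+1} = \delta_k I_k$. Multiplying the reciprocal identity by $I_{k+1}$ gives
\begin{equation*}
u_{k+1} = \frac{I_{k+1}}{\delta_k c_k} + A I_{k+1} = u_k + A I_{k+1}.
\end{equation*}
Summing from $0$ to $k-1$ telescopes to $u_k = c_0^{-1} + A\sum_{j=1}^{k} I_j$. Inverting, $c_k = I_k/u_k$, which is the claimed closed form.

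The only step needing care is that all divisions are legitimate. This is guaranteed by the hypothesis that every $c_k$ and $\delta_k$ is positive, which also makes every $I_j$ positive. There is no real obstacle beyond keeping track of the index shift in $I_{k+1} = \delta_k I_k$ and the convention $I_0 = 1$.
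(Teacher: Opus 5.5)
Your argument is correct and is essentially the paper's proof: pass to reciprocals to get $\delta_k c_{k+1}^{-1} - c_k^{-1} = A\delta_k$, multiply by $I_k$ using $I_{k+1}=\delta_k I_k$ and $I_0=1$, and telescope. Just delete your first display, which has the wrong sign; the corrected identity $c_{k+1}^{-1} = (\delta_k c_k)^{-1} + A$ that you then use is the right one.
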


\begin{proof}
    By a quick computation, \eqref{eq:sequence_equality} is equivalent to
    \[
    \delta_{k} c_{k+1}^{-1} - c_k^{-1} = A\delta_{k}.
    \]
    We adopt the convention $I_0=1$. Multiplying both sides of the equation above by $I_k$, and observing that $I_{k+1}=I_k \delta_{k}$, we obtain the relation
    \[
    I_{k+1}c_{k+1}^{-1}-I_{k}c_{k}^{-1}=A I_{k+1}, \qquad k\ge 0.
    \]
    It follows that
    \[
    I_k c_k^{-1} = I_0 c_0^{-1}+\sum_{j=1}^{k} (I_{j}c_{j}^{-1}-I_{j-1}c_{j-1}^{-1})=c_0^{-1}+A\sum_{j=1}^{k} I_{j},
    \]
    as desired.
\end{proof}

\begin{proof}[Proof of Proposition \ref{prop:sequence_inequality}]
    We start observing that if $(b_k)$ satisfies
    \[
    b_{k+1}\le \delta_{k} b_k-A\delta_{k}^2 b_k^2,
    \]
    then since $b_{k+1}\le \delta_{k} b_{k}$, it follows that $(b_k)$ also satisfies
    \begin{align}
    \label{eq:sequence_inequality_proof}
    b_{k+1} &\le \delta_{k} b_k-A \delta_{k} b_{k}b_{k+1}
    .
    \end{align}
    Next we claim a comparison property: if $(b_k)_{k=0}^\infty$ satisfies \eqref{eq:sequence_inequality_proof} and $(c_k)_{k=0}^\infty$ satisfies
    \begin{align}
    \label{eq:sequence_equality_proof}
    c_{k+1} &= \delta_{k} c_k-A\delta_{k}c_{k}c_{k+1}
    \end{align}
    and moreover $b_0=c_0$, then $b_k\le c_k$ for every $k$. We prove this by induction on $k$. The base case $k=0$ is true by assumption. For the inductive step, suppose that $b_k\le c_k$. Then using the fact that $g(t)=t/(1+A t)$ is increasing for $t\ge 0$ (since its derivative is $g'(t)=(1+A t)^{-2}$), we have 
    \[
    b_{k+1} \stackrel{\eqref{eq:sequence_inequality_proof}}
    \le 
    \frac{\delta_{k} b_k}{1+A \delta_{k} b_k}
    \le
    \frac{\delta_{k} c_k}{1+A \delta_{k} c_k}
    \stackrel{\eqref{eq:sequence_equality_proof}}
    =
    c_{k+1}
    .
    \]
    This completes the induction, and since we can solve for $c_k$ explicitly by Proposition \ref{prop:sequence_equality}, we reach the conclusion.    
\end{proof}

\subsubsection{Relation with Bernoulli's equation}
We briefly comment on the reason that made us consider the modified sequence of Proposition \ref{prop:sequence_equality}. We started from the observation that the recurrence relation
\[
y_{k+1}=\delta_{k+1} y_k-Ay_k^2
\]
can be rewritten as
\[
y_{k+1}-y_k=(\delta_{k+1}-1) y_k-Ay_k^2,
\]
and interpreting the left-hand side as a discrete derivative this resembles a discretized version of (a particular case of) Bernoulli's differential equation
\[
y'(x)=P(x)y(x)-A y(x)^{2},
\]
where the discrete integer variable $k$ has been replaced by the continuous real variable $x$.
The usefulness of this point of view is that this equation is explicitly solvable. First one can substitute $u(x)=y(x)^{-1}$ to obtain the equation
\begin{equation}\label{eq:u}
u'(x)+u(x)P(x)=A.
\end{equation}
Now one looks for a function $I(x)$ (an ``integrating factor'') with the property that 
\begin{equation}\label{eq:I}
    I'(x)=I(x)P(x).
\end{equation}
Indeed then multiplying \eqref{eq:u} by $I(x)$ one deduces
\begin{equation}\label{eq:Iu}
AI(x)=I(x)u'(x)+u(x)I(x)P(x)=(I(x)u(x))'
\end{equation}
and hence one can integrate this to solve for $I(x)u(x)$. Now the discretized version of \eqref{eq:I} is
\[
I_{k+1}-I_k=I_k(\delta_{k+1}-1),
\]
which yields $I_k=\prod_{i=1}^k \delta_i$. The discretized version of \eqref{eq:Iu} thus becomes
\[
I_{k+1}u_{k+1}-I_k u_k=A I_k.
\]
If one substitutes $u_k=y_k^{-1}$ then it is straightforward to show that this reduces to the equation
\[
y_{k+1}=\delta_{k+1}y_k-Ay_k y_{k+1}.
\]
So by the reasoning above this recurrence relation is explicitly solvable, but on the other hand the original recurrence relation is close enough to this one that one can compare the two, as shown in Proposition \ref{prop:sequence_inequality}.

\subsection{Proof of the main theorem}

\begin{proof}[Proof of the upper bounds in \Cref{thm:main}]
We start with \eqref{eq:E Fav Kn main thm}. Fix $n$. Let $a_0, \ldots, a_n$ be defined as in \eqref{eq:def a_k}. By \Cref{lemma:derive-recurrence}, we can apply \Cref{prop:sequence_inequality} with $b_k = a_{n-k}$ and $\delta_k = 4\frac{r_{n-k}}{r_{n-k-1}}$. This gives us
\begin{align}
    \label{eq:a_0 bound step}
    a_0 = b_n
    \stackrel{\eqref{eq:b_k bound}}{\lesssim}
    \frac{I_n }{1+\sum_{j=1}^{n} I_{j}}
    ,
    \qquad\text{where } I_j=\prod_{i=0}^{j-1} \delta_i = 4^j \frac{r_n}{r_{n-j}}.
\end{align}
Thus,
\begin{align}
    \label{eq:goal a_0 lesssim}
    \E |\proj_\theta (K_n)|
    \stackrel{\eqref{eq:K_n and S_varnothing}, \eqref{eq:def a_k}}=
    r_0 a_0
    \stackrel{\eqref{eq:a_0 bound step}}{\lesssim}
    \left(
    \sum_{j=0}^{n}\frac{1}{4^j r_j}
    \right)^{-1}
    .
\end{align}
Integrating in $\theta$ gives the upper bound in \eqref{eq:E Fav Kn main thm}.

For the upper bound in \eqref{eq:E Fav K(r) main thm}, suppose $r \in [r_n, r_{n-1}]$. Let $\wt K_n$ be as in the statement of \Cref{lemma:K(r) upper bound K_n}. Then by \Cref{lemma:K(r) upper bound K_n},
\begin{align*}
    \E \Fav(K(r))
    &\lesssim
    \E \Fav(\wt K_n)
    \\
    &\stackrel{\eqref{eq:E Fav Kn main thm}}{\lesssim}
    \left(
        \sum_{j=0}^{n-1}\frac{1}{4^m r_m}
        +
        \frac{1}{4^n \min(r, \frac{1}{3}r_n)}
    \right)^{-1}
    \approx
    \left(
        \sum_{j=0}^{n-1}\frac{1}{4^m r_m}
        +
        \frac{1}{4^n r}
    \right)^{-1}
\end{align*}
\end{proof}

\section{From expected decay to almost-sure decay}
\label{section:expected to almost sure decay}

The goal of this section is to prove \Cref{theorem:expected to almost sure decay}. For this section, we work only with the fully independent random model. Fix an admissible sequence $r_0 \geq r_1 \geq r_2 \cdots$ (recall \Cref{definition:admissible}).  
For $r \in [r_n, r_{n-1})$, define 
\[
g(r) = \sum_{j=0}^{n-1} \frac{1}{4^j r_j} + \frac{1}{4^n r}
\]
and recall that $\E \Fav K(r) \approx \frac{1}{g(r)}$ (\Cref{thm:main}). 

We say $r \in [r_n, r_{n-1})$ is \emph{trivial} if one of the $n+1$ terms in the sum in the definition of $g$ dominates, i.e.,
\begin{align}
\label{eq:r trivial}
\max(\{\frac{1}{4^j r_j} : j = 0, 1, \ldots, n-1\} \cup \{\frac{1}{4^n r}\}) \geq \frac{1}{4} g(r).
\end{align}
The reason we call such values ``trivial'' is because, as the following lemma states, they imply the desired Favard length bound deterministically.

\begin{lemma}
\label{lemma:trivial Fav 1/g bound}
    If $r$ is trivial, then $\Fav(K(r)) \lesssim \frac{1}{g(r)}$. The implied constant is absolute.
\end{lemma}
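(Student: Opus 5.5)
The plan is to bound $\Fav(K(r))$ deterministically by the reciprocal of the dominant term, split according to which of the $n+1$ terms dominates. If $r$ is trivial, then $g(r) \le 4T$, where $T$ is the maximal term in \eqref{eq:r trivial}. So it suffices to show $\Fav(K(r)) \lesssim 1/T$ in each case, for any choice of rotations.

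\textbf{Case 1: the dominant term is $\frac{1}{4^n r}$.} Here we use \Cref{lemma:K(r) upper bound K_n}, which gives $\Fav K(r) \lesssim \Fav \wt K_n$. The set $\wt K_n$ is a union of $4^n$ discs of radius $\wt r_n \le r$, so trivially
\[
\Fav \wt K_n \le \pi \cdot 4^n \cdot 2\wt r_n \lesssim 4^n r.
\]
This equals $1/T$, as required.

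\textbf{Case 2: the dominant term is $\frac{1}{4^j r_j}$ for some $j \le n-1$.} Since $r < r_{n-1} \le r_j$, we have $K(r) \subset K(r_j)$, as $K$ is contained in the discs of generation $j$. Each disc $D_\ii$ with $|\ii| = j$ has radius $r_j$, so its $r_j$-neighbourhood is contained in a disc of radius $2r_j$. Hence $K(r) \subset \bigcup_{|\ii| = j} 2D_\ii$, which is a union of $4^j$ discs of radius $2r_j$. Every projection of this union has length at most $4^j \cdot 4 r_j$, so
\[
\Fav K(r) \le 4\pi \cdot 4^j r_j \approx 1/T.
\]

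In both cases $\Fav(K(r)) \lesssim 1/T \le 4/g(r)$, with absolute constants. Neither case needs randomness or admissibility beyond the monotonicity of the radii. The only point requiring care is the convention $r \in [r_n, r_{n-1})$ together with $r_{-1} = \infty$. When $n = 0$ the sum over $j$ is empty, so only Case 1 occurs. That case needs no modification, because the bound $\Fav K(r) \lesssim \Fav \wt K_0$ from \Cref{lemma:K(r) upper bound K_n} gives $\Fav K(r) \lesssim \wt r_0 \le r$ directly.
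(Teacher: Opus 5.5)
Your proof is correct and is essentially the paper's argument: cover $K(r)$ by the $4^m$ generation-$m$ discs enlarged by $r$, which gives $\Fav K(r) \lesssim 4^m \max(r_m, r)$, and then pick $m$ to match the dominant term in $g(r)$. The one difference is that in Case 1 you go through \Cref{lemma:K(r) upper bound K_n}; this detour is valid but unnecessary, since $K(r)$ is directly covered by $4^n$ discs of radius $r_n + r \le 2r$, so a single covering bound handles both cases at once.
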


\begin{proof}
    For any $m$, $K$ is contained in the union of $4^m$ discs of radius $r_m$. Thus, $K(r)$ is contained in the union of $4^m$ discs of radius $r_m+r$. This implies
    \begin{align}
    \label{eq:Fav K(r) trivial upper bound}
    \Fav(K(r)) \lesssim 4^m \max(r_m,r) \qquad\text{for any $r$ and any $m$}
    \end{align}
    Now suppose $r \in [r_n, r_{n-1})$ is trivial. Then by applying \eqref{eq:Fav K(r) trivial upper bound} with $m=0, 1, \ldots, n$, we get
    \[
    \Fav(K(r))
    \stackrel{\eqref{eq:Fav K(r) trivial upper bound}}{\lesssim} 
    \min (\{4^j r_j : j = 0, \ldots, n-1\} \cup \{4^n r\})
    \stackrel{\eqref{eq:r trivial}}{\lesssim}
    \frac{1}{g(r)}
    \]
    as desired
\end{proof}

For $m \geq 0$, let $K^{(m)} = \bigcap_{n=0}^\infty K^{(m)}_n$ be the random Cantor set generated with the radii $(r_m, r_{m+1}, r_{m+2}, \ldots)$ instead of $(r_0, r_1, r_2, \ldots)$. We make two observations. First,
\begin{align}
\label{eq:union-copies}
    \text{$K(r)$ is a union of $4^m$ independent copies of $K^{(m)}(r)$.}
\end{align}
(Recall that in this section, we work only with the fully independent random model.) Second, by \Cref{thm:main},
\begin{align}
\label{eq:expectation K^(m)}
4^m \E \Fav K^{(m)}(r)
\approx
\left(\sum_{j=m}^{n-1} \frac{1}{4^j r_j} + \frac{1}{4^n r} \right)^{-1}
=
\frac{1}{g(r) - g(r_{m-1})}
\end{align}

\begin{lemma}[Hoeffding]
\label{lemma:hoeffding-corollary}
For all $m \in \N, r > 0, s > 0$,
\begin{align*}
    \P\bigl[\Fav K(r) > 4^m \E \Fav K^{(m)}(r) + s \bigr]
    \leq
    \exp\left(-\frac{s^2}{2 \cdot 4^m (r_m+r)^2}\right)
    .
\end{align*}
\end{lemma}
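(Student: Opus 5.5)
We bound $\Fav K(r)$ by a sum of $4^m$ independent bounded random variables and apply Hoeffding's inequality. By \eqref{eq:union-copies}, in the independent rotations model we can write $K(r) = \bigcup_{|\ii| = m} X_\ii$, where $X_\ii$ is the $r$-neighbourhood of the part of $K$ inside $D_\ii$. Each $X_\ii$ is a translate (and, because of the random angles $\omega_\jj$ with $|\jj|<m$, possibly a rotated copy) of an independent copy of $K^{(m)}(r)$.

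The precise structure needs care. The position of $D_\ii$ depends on the angles $(\omega_\jj)_{|\jj|<m}$. The shape of $K\cap D_\ii$ depends only on the angles $(\omega_{\ii\jj})_{\jj}$, which are independent across $\ii$ and independent of the earlier angles. Favard length is invariant under translations. It is also invariant under rotations, since we integrate over all directions $\theta$ and rotations of the argument shift $\theta$ modulo $\pi$. Hence, conditioning on $(\omega_\jj)_{|\jj|<m}$, the random variables $Y_\ii := \Fav(X_\ii)$ are independent, each has the law of $\Fav K^{(m)}(r)$, and this law does not depend on the conditioning. So the $Y_\ii$ are unconditionally i.i.d. with mean $\E\Fav K^{(m)}(r)$.

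Subadditivity of projections gives $|\proj_\theta K(r)| \le \sum_\ii |\proj_\theta X_\ii|$, and integrating in $\theta$ yields $\Fav K(r) \le \sum_{|\ii|=m} Y_\ii$. Therefore
\[
\P\bigl[\Fav K(r) > 4^m\E\Fav K^{(m)}(r) + s\bigr] \le \P\Bigl[\sum_\ii (Y_\ii - \E Y_\ii) > s\Bigr].
\]
Next we bound each $Y_\ii$. Since $X_\ii$ is contained in a disc of radius $r_m + r$, each projection has length at most $2(r_m+r)$, so $0 \le Y_\ii \le 2\pi(r_m+r)$.

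Hoeffding's inequality for $N=4^m$ independent variables with ranges of length $c$ gives the bound $\exp(-2s^2/(N c^2))$. With $c = 2\pi(r_m+r)$ this is $\exp\bigl(-s^2/(2\pi^2 4^m (r_m+r)^2)\bigr)$, which has the wrong constant in the exponent. This constant mismatch is the only real obstacle, and it comes down to the normalisation of $\Fav$. As the paper writes it, $\Fav$ integrates over $\theta\in[0,\pi]$, so the bound $Y_\ii \le 2\pi(r_m+r)$ is the honest one. To reach the stated constant $2$, I would try to sharpen the range bound, for example by using the projection-by-projection structure or by working with the normalised average. If that fails, I would note that the constant in the exponent is immaterial for the subsequent application: any bound of the form $\exp\bigl(-c\, s^2/(4^m(r_m+r)^2)\bigr)$ with $c>0$ absolute suffices there, and I would record the lemma with such an absolute $c$.
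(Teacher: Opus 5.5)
Your argument is the same as the paper's. You decompose $K(r)$ into the $4^m$ pieces $X_\ii$, observe that their Favard lengths are i.i.d.\ copies of $\Fav K^{(m)}(r)$, use subadditivity, and apply Hoeffding to bounded summands. One simplification: no rotations actually occur. Each $\discmap{D}$ is a homothety plus a translation, so $K\cap D_\ii$ is a translate of the copy of $K^{(m)}$ built from $(\omega_{\ii\jj})_{\jj}$, and only translation invariance of $\Fav$ is needed.

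Your worry about the constant is justified. It comes from a slip in the paper's proof, not a gap in yours. The paper asserts $0\le\Fav K^{(m)}(r)\le 2(r_m+r)$. That holds for the normalised average $\frac1\pi\int_0^\pi|\proj_\theta E|\,d\theta$, the ``average length'' described in the introduction, but not for the displayed definition $\int_0^\pi$. Under the displayed definition your bound $2\pi(r_m+r)$ is the correct one, and Hoeffding yields $\exp\bigl(-s^2/(2\pi^2\,4^m(r_m+r)^2)\bigr)$. Under the normalised convention your computation reproduces the stated exponent exactly.

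The obvious sharpening of the range will not recover the constant $2$. Using $\Fav K^{(m)}(r)\ge 2\pi r$ only shrinks the range to $2\pi r_m$. That still exceeds $2(r_m+r)$ whenever $r<(\pi-1)r_m$, and in \Cref{lem:nontrivial} one always has $r<r_{n-1}\le r_m$.

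Your fallback of recording the lemma with an absolute constant $c$ is the right resolution. \Cref{lem:nontrivial} and the Borel--Cantelli step only use the exponent up to an absolute constant, so replacing $2$ by $2\pi^2$ changes nothing downstream.
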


\begin{proof}
We adapt the argument from the proof of \cite[Lemma 3.5]{chang-shmerkin-suomala}. Fix $r > 0$. By \eqref{eq:union-copies} and subadditivity of Favard length, it follows that $\Fav K(r)$ is bounded above by the sum $Y$ of $4^m$ independent copies of $\Fav K^{(m)}(r)$. Since $K^{(m)}$ is contained in a disc of radius $r_m$, we have $0\le \Fav K^{(m)}(r) \leq 2(r_m+r)$, so Hoeffding's inequality (e.g., \cite[Theorem 2.2.1]{vershynin}) yields (for all $s > 0$)
\begin{equation*}
    \P\bigl[\Fav(K(r))> 4^m \E \Fav K^{(m)}(r) + s \bigr]
    \leq
    \P\bigl[Y>\E[Y] + s \bigr]
    \leq
    \exp\left(-\frac{s^2}{2 \cdot 4^m (r_m+r)^2}\right)
    . 
    \qedhere
\end{equation*}    
\end{proof}

\begin{lemma}[Nontrivial implies existence of good scale]\label{lem:nontrivial}
Suppose $r \in [r_n, r_{n-1})$ is non-trivial (recall \eqref{eq:r trivial}). Then there exists $m \leq n-1$ such that $g(r_m) \in [\frac{1}{4}g(r), \frac{3}{4} g(r)]$ and
\begin{align}
\label{eq:prob using Hoeffding}
    \P\bigl[\Fav K(r) > \frac{C}{g(r)} \bigr]
    \leq
    \exp\left(- c \cdot 2^m \right)\,.
\end{align}
The constants $C, c > 0$ are absolute.
\end{lemma}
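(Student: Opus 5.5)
The plan has three parts: find the index $m$ by a discrete intermediate value argument on the partial sums of $g$, bound the mean term $4^m \E \Fav K^{(m)}(r)$ by \eqref{eq:expectation K^(m)}, and control the deviation with \Cref{lemma:hoeffding-corollary} using $s = 1/g(r)$.

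\textbf{Step 1: finding $m$.} For $m \le n-1$ we have $r_m \in [r_m, r_{m-1})$, so by definition
\[
g(r_m) = \sum_{j=0}^{m} \frac{1}{4^j r_j}.
\]
These partial sums are increasing in $m$. Consider their endpoints and increments.
\begin{itemize}
\item By non-triviality \eqref{eq:r trivial}, every single term is $< \frac14 g(r)$. In particular $g(r_0) = 1/r_0 < \frac14 g(r)$, and each increment $g(r_m) - g(r_{m-1}) = \frac{1}{4^m r_m}$ is $< \frac14 g(r)$.
\item At the top, $g(r_{n-1}) = g(r) - \frac{1}{4^n r} > \frac34 g(r)$, again since the last term is $<\frac14 g(r)$.
\end{itemize}
Let $m \le n-1$ be the first index with $g(r_m) \ge \frac14 g(r)$. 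Then $m \ge 1$ and $g(r_{m-1}) < \frac14 g(r)$. Since the increment is $<\frac14 g(r)$, we get $g(r_m) < \frac12 g(r)$, so $g(r_m) \in [\frac14 g(r), \frac34 g(r)]$.

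\textbf{Step 2: the mean term.} Because $g(r_{m-1}) < \frac14 g(r)$, we have $g(r) - g(r_{m-1}) > \frac34 g(r)$. Then \eqref{eq:expectation K^(m)} gives
\[
4^m \E \Fav K^{(m)}(r) \le \frac{C_0}{g(r)}
\]
for an absolute constant $C_0$.

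\textbf{Step 3: the deviation.} Apply \Cref{lemma:hoeffding-corollary} with $s = 1/g(r)$ and take $C = C_0 + 1$. This gives
\[
\P\bigl[\Fav K(r) > \tfrac{C}{g(r)}\bigr] \le \exp\Bigl(-\frac{1}{2\cdot 4^m (r_m+r)^2 g(r)^2}\Bigr).
\]
Since $m \le n-1$, we have $r < r_{n-1} \le r_m$, so $r_m + r \le 2 r_m$. It therefore suffices to show $4^m r_m^2 g(r)^2 \lesssim 2^{-m}$.

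\textbf{Main obstacle: bounding $4^m r_m^2 g(r)^2$.} Non-triviality only bounds terms from above by $g$, which is the wrong direction here. Instead I would use $g(r) \le 4 g(r_m)$ together with admissibility, which gives $r_j \ge 3^{m-j} r_m$ for $j \le m$. Hence
\[
g(r_m) = \sum_{j\le m} \frac{1}{4^j r_j} \le \frac{1}{r_m}\sum_{j \le m} 4^{-j} 3^{-(m-j)} = \frac{4^{-m}}{r_m}\sum_{j\le m}\Bigl(\frac43\Bigr)^{m-j} \lesssim \frac{3^{-m}}{r_m}.
\]
Consequently $4^m r_m^2 g(r)^2 \lesssim 4^m 9^{-m} = (4/9)^m \le 2^{-m}$. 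The exponent is then $\gtrsim (9/4)^m \ge 2^m$, which gives \eqref{eq:prob using Hoeffding} with absolute constants $C$ and $c$.
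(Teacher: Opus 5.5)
Your proof is correct and follows essentially the paper's argument: you find $m$ using the fact that non-triviality makes every increment smaller than $\frac14 g(r)$, bound the mean via \eqref{eq:expectation K^(m)}, apply \Cref{lemma:hoeffding-corollary}, and use admissibility to get $4^m r_m^2 g(r_m)^2 \lesssim (4/9)^m$. The differences are cosmetic. You take $m$ to be the first index with $g(r_m)\ge \frac14 g(r)$, so $g(r_{m-1})<\frac14 g(r)$ controls the mean directly, and you use $s=1/g(r)$. The paper instead requires both $g(r_{m-1})$ and $g(r_m)$ to lie in $[\frac14 g(r), \frac34 g(r)]$ and takes $s = 1/(g(r)-g(r_{m-1}))$.
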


\begin{proof}
Since $r$ is nontrivial, the consecutive terms of the finite sequence 
\[
0, g(r_0), g(r_1), \ldots, g(r_{n-1}), g(r)
\]
differ by at most $\frac{1}{4} g(r)$. Thus, there exists $m \leq n-1$ such that 
\begin{align}
    \label{eq: g m-1 m r comparable}
    g(r_{m-1}), g(r_m) \in [\frac{1}{4} g(r), \frac{3}{4} g(r)].
\end{align}

We keep this $m$ fixed for the remainder of this proof. Note that 
\begin{align}
\label{eq:expectation K^(m) comparable 1/g(r)}
4^m \E \Fav K^{(m)}(r)
\stackrel{\eqref{eq:expectation K^(m)}}{\approx}
\frac{1}{g(r) - g(r_{m-1})}
\stackrel{\eqref{eq: g m-1 m r comparable}}{\approx} 
\frac{1}{g(r_m)}
.
\end{align}
Now we apply \Cref{lemma:hoeffding-corollary} with $m$ as above and with $s = \frac{1}{g(r) - g(r_{m-1})}$. Combining this with \eqref{eq:expectation K^(m) comparable 1/g(r)} gives us
\begin{align*}
    \P\bigl[\Fav K(r) > \frac{C}{g(r)} \bigr]
    \leq
    \exp\left(-\frac{c}{g(r_m)^2 \cdot 4^m \cdot r_m^2}\right)\,.
\end{align*}
To complete the proof, we need to show
\begin{align}
    \label{eq:g^2 4^m r_m^2}
    g(r_m)^2 \cdot 4^m \cdot r_m^2 \lesssim \frac{1}{2^m}
\end{align}
By admissibility, we have
\[
g(r_m) 
= 
\sum_{j=0}^m \frac{1}{4^j r_j} 
\stackrel{\eqref{eq:admissible}}{\leq}
\sum_{j=0}^m \frac{1}{4^j \cdot 3^{m-j} r_m} 
=
\frac{1}{3^m r_m} \sum_{j=0}^m \left(\frac{3}{4}\right)^j 
\approx 
\frac{1}{3^m r_m},
\]
which rearranges to $g(r_m)^2 \cdot 4^m \cdot r_m^2 \lesssim (\frac{4}{9})^m$. This proves \eqref{eq:g^2 4^m r_m^2}.
\end{proof}

\begin{proof}[Proof of \Cref{theorem:expected to almost sure decay}]
By monotonicity of $r \mapsto \Fav K(r)$, it suffices to show 
\begin{align}
\label{eq:limsup delta_k}
    \limsup_{k \to \infty} \frac{\Fav K(\delta_k)}{2^{-k}} \leq C
   \qquad\text{almost surely}.
\end{align}
where $(\delta_k)_{k=1}^\infty$ is a sequence such that $\E \Fav K(\delta_k) \approx 2^{-k}$. (Such a sequence exists since $r \mapsto \E \Fav K(r)$ is continuous and $\lim_{r \to 0} \E \Fav(K(r)) = 0$.)

If $\delta_k$ is nontrivial, then by \Cref{lem:nontrivial}, there exists $m_k$ such that 
\begin{align}
\label{eq:g(r_m_k)}
g(r_{m_k}) \in \left[\frac{1}{4}g(\delta_k), \frac{3}{4} g(\delta_k)\right]
\end{align} 
and
\begin{align*}
    \P\bigl[\Fav K(\delta_k) > \frac{C}{g(\delta_k)} \bigr]
    \leq
    \exp\left(-c \cdot 2^{m_k}\right)\,.
\end{align*}
Note that $g(r_{m_k}) \approx g(\delta_k) \approx 2^k$, so we know $\# \{k : m_k = m\} = O(1)$ for each integer $m$. Thus, by making $C$ larger if needed,
\[
\sum_{k=1}^\infty
\P\bigl[\Fav K(\delta_k) > \frac{C}{g(\delta_k)} \bigr]
\stackrel{\textup{Lem } \ref{lemma:trivial Fav 1/g bound}}{=}
\sum_{k : \delta_k \text{ nontrivial}}
\P\bigl[\Fav K(\delta_k) > \frac{C}{g(\delta_k)} \bigr]
< \infty.
\]
By Borel--Cantelli,
\begin{align*}
    \limsup_{k \to \infty} \frac{\Fav K(\delta_k)}{1/g(\delta_k)} \leq C
   \qquad\text{almost surely}.
\end{align*}
Combining this with \Cref{thm:main} gives \eqref{eq:limsup delta_k}.
\end{proof}

\begin{remark}
\label{remark:lim=0 necessary}
    The assumption $\lim_{r \to 0} \E \Fav(K(r)) = 0$, or equivalently $\sum_{j=0}^\infty 4^{-j} r_j^{-1}=\infty$, is necessary in Theorem \ref{theorem:expected to almost sure decay}. To see this, fix some large $n\in \N$ and consider the admissible sequence 
    \[
    r_j
    =
    \begin{cases}
        4^{-j} &\text{for } 0\le j\le n
        \\
        4^{-n}3^{n-j} &\text{for } j\ge n
        .
    \end{cases} 
    \]
    The corresponding Cantor set resembles the classical four corner Cantor set up to scale $4^{-n}$, except for the rotations. At smaller scales the radii decay so slowly that the Favard length essentially freezes at scale $4^{-n}$: it follows that for $0<r<4^{-n}$, if $k\ge n+1$ is such that $r \in [r_k, r_{k-1}]$,
    \begin{equation}
    \label{eq:lim=0 necessary E}
    \E[\Fav(K(r))]
    \stackrel{\eqref{eq:E Fav K(r) main thm}}{\approx}
    \left(
    n
    + \sum_{j=1}^{k-n-1}\left(\frac{3}{4}\right)^{j} +
    \frac{1}{4^k r}
    \right)^{-1}\approx n^{-1}.
    \end{equation}

    In fact, the Favard length does not only freeze in expectation: $\Fav(K) \approx \Fav K_n$ deterministically. This follows from the Vitali covering lemma on $\R$ applied to the projections of the discs in $K_n$ together with the observation $\Fav(K\cap D)\approx 4^{-n}$ for all the discs $D \subset K_n$ (this follows from the capacity lower bound \eqref{eq:Cap K_n lower bd} applied to $K\cap D$). Thus, for all $0 < r < 4^{-n}$, we have 
    \[
    \Fav(K(r)) \approx \Fav(K) \approx \Fav K_n.
    \]

    Now, let $\wt K_n$ be the $n$th stage of the standard four corner Cantor set with no rotations (\Cref{fig:four corner Cantor discs}). By continuity, there exists some $\epsilon > 0$ ($\epsilon\ll 4^{-n}$) such that if the rotations $\omega_\ii$ satisfy $\omega_\ii\in [0,\varepsilon]$ for all $|\ii| \leq n$, then 
    \[
    \Fav K_n \approx \Fav \wt K_n
    \stackrel{\eqref{eq:bateman-volberg}}{\gtrsim} \frac{\log n}{n} 
    .
    \]
    In the independent rotations model, the event $\omega_\ii\in [0,\varepsilon]$ for all $|\ii| \leq n$ occurs with positive probability. Combined with the previous paragraph, this shows that with positive probability,
    \[
    \Fav(K(r)) \gtrsim \frac{\log n}{n} \qquad\text{for all $0 < r < 4^{-n}$}.
    \]
    
    Comparing this with \eqref{eq:lim=0 necessary E}, we see that with positive probability
    \[
    \liminf_{r \to 0} \frac{\Fav K(r)}{\E \Fav K(r)} 
    \gtrsim 
    \log n
    \]
    so \eqref{eq:almostsure} fails.
\end{remark}

\section{Prescribing Favard length decay}
\label{section:prescribe}

Let $r_0 \geq r_1 \geq \cdots$ be admissible. (Recall \Cref{definition:admissible}.) In this section, it will be more convenient to work with the reciprocals, so we define $s_n = 1/r_n$ and we work with the variable $s = 1/r$.

Define $\fjump : [0, \infty) \to [0, \infty)$ by $\fjump(0) = 0$ and 
\begin{align}
\label{eq:def fjump}
\fjump(s) 
=
\sum_{j=0}^{n-1} \frac{s_j}{4^j}   +    \frac{s}{4^n}\quad\quad \text{for $s_{n-1} <s\le s_n$.}
\end{align}
Note that $1/\fjump(1/r)$ is precisely the right-hand side of \eqref{eq:E Fav K(r) main thm}. Thus, in view of \Cref{thm:main}, to prove \Cref{theorem:prescribed}, it suffices to show the following.

\begin{proposition}
\label{prop:1/f(1/r) to s_n}
Let $f:(0,\infty)\to (0,\infty)$ be a concave increasing function satisfying $\lim_{r \to \infty} f'(r) = 1$.
Then there exists a sequence $0 = s_{-1} < s_0 < s_1 < s_2 < \ldots$ such that $\frac{s_{n+1}}{s_{n}}\ge 3$ and such that $f(r) \approx 1/\fjump(1/r)$.
\end{proposition}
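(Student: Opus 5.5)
The plan is to pass to the reciprocal variable and build the sequence $(s_n)$ greedily. Set $h(s) = 1/f(1/s)$ for $s > 0$, so the goal becomes $\fjump(s) \approx h(s)$ for all $s>0$.

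\textbf{Properties of $h$.} Concavity of $f$ together with $f>0$ gives that $f(r)/r$ is nonincreasing. Since $f$ is increasing and $f'(r) \to 1$, we get $f'\ge 1$, hence $f(r) \ge r$. Translated to $h$:
\begin{itemize}[label={}]
\item $h$ is increasing;
\item $h(s)/s = r/f(r)$ with $r=1/s$ is nonincreasing in $s$, and $h(s)/s \to 1$ as $s \to 0$;
\item $h(s) \le s$, and $h(\lambda s) \le \lambda h(s)$ for $\lambda \ge 1$;
\item $h'(s) = f'(r)\, r^2/f(r)^2 \le h(s)/s$, using $f'(r) \le f(r)/r$.
\end{itemize}
On the other side, $\fjump$ is concave and piecewise linear, with slope $4^{-n}$ on $(s_{n-1}, s_n]$. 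It also satisfies $\fjump(s)/s$ nonincreasing and $\fjump(s) = s$ near $0$. So the two functions have the same qualitative shape, and the task is a quantitative approximation.

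\textbf{Greedy construction.} Put $s_{-1}=0$, and choose $s_0$ so that $h(s) \approx s$ on $(0,s_0]$, which is possible since $h(s)/s \to 1$. Given $s_0,\dots,s_{n-1}$, continue $\fjump$ past $s_{n-1}$ with slope $4^{-n}$. Let $s_n$ be the larger of $3s_{n-1}$ and the first point where this line reaches $2h$; if it never does, stop and keep slope $4^{-n}$ forever. Admissibility $s_{n+1}/s_n \ge 3$ holds by construction.

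\textbf{Upper bound $\fjump \lesssim h$.} This is essentially by construction. The only care needed is the forced stretch $[s_{n-1}, 3s_{n-1}]$, where $\fjump$ may cross $2h$. There I will use $\fjump(3s_{n-1}) \le 3\fjump(s_{n-1}) \le 6h(s_{n-1}) \le 6h(s)$, with the constant $6$ absorbed.

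\textbf{Lower bound $h \lesssim \fjump$ (main obstacle).} After a switch, the slope drops by a factor $4$. Meanwhile $h$ is only constrained by $h' \le h/s$, so a priori it could keep growing faster than $4^{-n}$ and leave $\fjump$ far below it before the next switch. My first attempt is a two-step comparison:
\begin{enumerate}
\item At the switching point, $\fjump(s_n) = 2h(s_n)$. The previous segment had slope $4^{-n+1}$, yet $\fjump$ only just caught up with $2h$ there. I will exploit this to show that $h$'s average growth rate on $[s_{n-1}, s_n]$ is already $\lesssim 4^{-n+1}$.
\item I will then propagate this bound forward using the monotonicity of $h(s)/s$, in the form of a concave-minorant argument.
\end{enumerate}
If this fails, the fallback is to rescale the construction: instead of switching when $\fjump$ hits $2h$, switch at the least $s$ where $4^{-n}s \gtrsim h(s) - \fjump(s_{n-1})$. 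The aim is to compare $\fjump$ directly with the concave majorant of $h$, which is comparable to $h$ because $h(s)/s$ is monotone.

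\textbf{Conclusion.} Finally, I will verify that $\fjump(s)\approx h(s)$ also holds in the regime where the construction stops after finitely many steps, i.e.\ when $h$ eventually grows slower than any line from the last breakpoint. Translating $\fjump \approx h$ back via $r = 1/s$ then gives $f(r) \approx 1/\fjump(1/r)$, which is the statement.
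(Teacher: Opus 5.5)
There is a genuine gap, exactly at the step you flag as the main obstacle. The lower bound $h\lesssim\fjump$ is never proved, and for the construction as you specify it, it is false.

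Your switching rule is online: it only compares the current line with $2h$. The forced switches $s_n=3s_{n-1}$ also divide the slope by $4$ every time $s$ triples, whatever $h$ does later. But $h=1/f(1/s)$ need not be concave, since a concave corner of $f$ at $r_*$ is a convex corner of $h$ at $1/r_*$. So $h$ can stall for a long time and then resume growing at the maximal rate $h(s)/s$.

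Concretely, for $H\ll b$ take $f(r)=\min\{br/H,\ 1/H+r-1/b\}$. This is concave and increasing, with $f'=1$ on $(1/b,\infty)$. Then $h(s)=(1/H+1/s-1/b)^{-1}\approx\min\{s,H\}$ on $(0,b]$, with $h\le H$ there, and $h(s)=Hs/b$ for $s\ge b$.

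Run your construction on this $h$:
\begin{enumerate}
\item After a bounded number of switches near $s\approx H$, one has $\fjump>2H\ge 2h$ on the rest of $(0,b]$. So every subsequent switch up to $s\approx b$ is forced.
\item Hence at $s\approx b$ the slope is about $(H/b)^{\log_3 4}\ll H/b$, while $\fjump=O(H)$, because along a chain of forced switches $\fjump$ grows only by a bounded factor.
\item Beyond $b$, once $2h$ has overtaken the line (after boundedly many further switches), the line can never reach $2h$ again, and your construction stops.
\item Then $h(s)/\fjump(s)$ tends to a limit $\gtrsim(b/H)^{\log_3 4-1}$, which is not bounded by an absolute constant.
\end{enumerate}
If instead a line that starts above $2h$ counts as never reaching it, you stop with slope $4^{-1}$ or $4^{-2}$ near $s\approx H$. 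Then the upper bound fails by a factor comparable to $s/H$ on $[H,b]$.

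Neither of your repairs addresses this. Step (1) only controls $h$ on the interval already passed. The monotonicity of $h(s)/s$ in step (2) permits exactly the future slope $H/b$ that geometrically decaying slopes cannot follow. Your fallback rule is still online with forced tripling. In fact the same failure occurs if you run your greedy rule on the concave majorant of $h$. So the forced-switch mechanism itself is the problem, not only the non-concavity of $h$.

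The missing idea is to read the breakpoints off the derivative of a concave comparison function, whose derivative at $s$ already bounds all future growth. The ratio condition should then come from a regularity property of that derivative, not from forcing switches. The paper does this in three steps:
\begin{enumerate}
\item \Cref{lemma:1/f(1/x)} replaces $h$ by its upper concave envelope $g$. One has $h\le g\le 2h$ because $h$ is increasing and $h(s)/s$ is nonincreasing; you anticipated this part in your fallback.
\item \Cref{lemma:f deriv control}, with $\alpha=3$ and $\beta=4$, replaces $g'$ by $\sup_{k\ge0}4^{-k}g'(s/3^k)$. This changes $g$ by at most a factor $4$ and guarantees $g'(s)\ge\tfrac14 g'(s/3)$.
\item One then sets $s_n=\inf\{s:g'(s)\le 4^{-(n+1)}\}$.
\end{enumerate}
The regularity of $g'$ gives $s_n\ge 3s_{n-1}$ automatically. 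Moreover $g'\approx 4^{-n}=\fnojump'$ on $(s_{n-1},s_n)$, so integrating gives $g\approx\fnojump$.

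Note also that, contrary to your description, $\fjump$ is neither continuous nor concave. It jumps by $s_j/4^{j+1}$ at each $s_j$, so $\fjump(s)/s$ is not monotone, and right after a crossing-triggered switch your new line starts strictly above $2h$. The paper therefore works with the continuous piecewise linear $\fnojump$ and uses $\tfrac34\fjump\le\fnojump\le\fjump$.
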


To prove \Cref{prop:1/f(1/r) to s_n}, we will rely on the following two lemmas.

\begin{lemma}
\label{lemma:1/f(1/x)}
For any concave increasing function $f : (0, \infty) \to (0, \infty)$, there exists a concave increasing function $g : (0, \infty) \to (0, \infty)$ such that $g(x) \leq 1/(f(1/x)) \leq 2 g(x)$
\end{lemma}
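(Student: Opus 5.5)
The plan is to set $h(x) := 1/f(1/x)$ for $x>0$. I will show that $h$ is increasing and star-shaped, i.e.\ $h(x)/x$ is non-increasing, and then take $g$ to be one half of the least concave majorant of $h$.

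\textbf{Step 1 (properties of $h$).} $h$ is positive. It is increasing because $x \mapsto 1/x$ is decreasing and $f$ is increasing. For the star-shape property, write $t = 1/x$, so that $h(x)/x = t/f(t)$. Since $f$ is concave and positive on $(0,\infty)$, the chord slope $f(t)/t$ is non-increasing in $t$. To see this, for $0<s<t$ use concavity at $s = \lambda t + (1-\lambda)\epsilon$ and let $\epsilon\downarrow 0$, using $\liminf_{\epsilon\to 0} f(\epsilon)\ge 0$; this gives $f(s) \ge (s/t) f(t)$. Hence $t/f(t)$ is non-decreasing in $t$, and therefore $h(x)/x$ is non-increasing in $x$.

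\textbf{Step 2 (the concave majorant is at most $2h$).} Let $H(x) = \sup\{\lambda h(a) + (1-\lambda) h(b)\}$, where the supremum runs over $0<a\le x\le b$ and $\lambda\in[0,1]$ with $\lambda a + (1-\lambda) b = x$. In one dimension, two-point combinations suffice to describe the least concave majorant, so $H$ is that majorant, provided it is finite. For any admissible $a,b,\lambda$, monotonicity gives $h(a)\le h(x)$, and Step 1 gives $h(b) \le (b/x) h(x)$. Therefore
\[
\lambda h(a) + (1-\lambda) h(b) \le h(x)\Bigl(\lambda + \frac{(1-\lambda) b}{x}\Bigr) \le 2h(x),
\]
since $(1-\lambda) b \le \lambda a + (1-\lambda) b = x$. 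So $h \le H \le 2h$; in particular $H$ is finite and concave. Set $g = H/2$. Then $g \le h \le 2g$, which is exactly $g(x)\le 1/f(1/x)\le 2g(x)$.

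\textbf{Step 3 ($g$ is concave, increasing and positive).} Concavity and positivity are immediate from Step 2, since $g \ge h/2 > 0$. For monotonicity, suppose $g(x_1) > g(x_2)$ for some $x_1<x_2$. Concavity would then force $g(x)\to-\infty$ as $x\to\infty$, contradicting $g>0$. So $g$ is non-decreasing, which is what "increasing" means throughout this section. If strict monotonicity were needed, one could replace $g$ by $g(x) + \epsilon\min(x,1)$-type perturbations, but I expect non-strict monotonicity to suffice for the later use in Proposition~\ref{prop:1/f(1/r) to s_n}.

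The only real content is Step 2, namely the observation that star-shapedness together with monotonicity controls the concave majorant within a factor of $2$. The remaining steps are routine checks.
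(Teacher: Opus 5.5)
Your proof is correct and follows the paper's argument: the paper also sets $h(x)=1/f(1/x)$, uses Lemma~\ref{lemma:increasing star shaped} to see that $h$ is increasing with $h(x)/x$ non-increasing, bounds the upper concave envelope of $h$ by $2h$ through the same two-point estimate you use, and then rescales by $2$. Your Step 3, deriving monotonicity from concavity and positivity, fills in a check the paper leaves implicit; the aside about perturbing by $\epsilon\min(x,1)$ would not give strict monotonicity for $x>1$, but that aside is not needed.
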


\begin{lemma}
\label{lemma:from f to s_n}
Let $f:[0,\infty)\to [0,\infty)$ be a concave increasing function satisfying $f(0)=0$ and $f'(0) = 1$. (In particular, $f$ is continuous at $0$.) Then there exists a sequence $0 = s_{-1} < s_0 < s_1 < s_2 < \ldots$ such that $\frac{s_{n+1}}{s_{n}}\ge 3$ and such that $f(s) \approx \fjump(s)$. The implied constant is absolute.
\end{lemma}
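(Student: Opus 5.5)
The plan is to build the breakpoints $s_n$ directly from the right derivative $f'_+$ of $f$. The key observation is that $\fjump$ is itself a concave piecewise linear function with $\fjump(0)=0$ and slope $4^{-n}$ on $(s_{n-1},s_n]$. So we want $s_n$ to sit roughly where $f'_+$ drops below $4^{-n-1}$, and we enlarge it when needed to enforce $s_{n}\ge 3s_{n-1}$.

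\emph{Construction.} Let
\[
t_n=\sup\{s\ge 0: f'_+(s)\ge 4^{-n-1}\}\in(0,\infty].
\]
These are nondecreasing in $n$, and $t_0>0$ because $f'(0)=1$. Set $s_{-1}=0$, $s_0=t_0$, and recursively $s_n=\max(t_n,3s_{n-1})$. If some $t_n=\infty$, the sequence terminates with $s_n=\infty$, i.e.\ $r_n=0$; this is the degenerate admissible case allowed in \Cref{definition:admissible}. By construction $s_n/s_{n-1}\ge 3$ for $n\ge1$. Since $f'_+$ is nonincreasing, we have $f'_+\ge 4^{-n-1}$ on $[0,t_n)$ and $f'_+\le 4^{-n-1}$ on $(t_n,\infty)$.

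\emph{Upper bound $f\le \fjump$.} I would compare increments on each interval $(s_{n-1},s_n]$. Since $s_{n-1}\ge t_{n-1}$, we get $f'_+\le 4^{-n}$ on $(s_{n-1},\infty)$. For $n=0$ we use $f'\le f'(0)=1$ instead. Hence
\[
f(s)-f(s_{n-1})\le 4^{-n}(s-s_{n-1})=\fjump(s)-\fjump(s_{n-1}).
\]
Telescoping from $f(0)=\fjump(0)=0$ gives $f\le\fjump$ everywhere, with constant $1$.

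\emph{Lower bound $\fjump\lesssim f$.} Call the index $n$ \emph{free} if $s_n=t_n$ (this includes $n=0$), and \emph{forced} if $s_n=3s_{n-1}>t_n$.
\begin{itemize}
\item On a free interval, $f'_+\ge4^{-n-1}$ on $(s_{n-1},s_n)$. So the increment of $f$ there is at least $\frac14$ of the increment of $\fjump$, which is $4^{-n}(s-s_{n-1})$. For the full interval this increment is $\ge\frac23 4^{-n}s_n$, because $s_n\ge 3s_{n-1}$.
\item Consider a maximal run of forced indices $m+1,\dots,n$ following a free index $m$. Then $s_{j}=3^{j-m}s_m$, and the $\fjump$-increment on $(s_{j-1},s_j]$ is at most $4^{-j}\cdot 2\cdot 3^{j-1-m}s_m$. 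The sum over the run is a geometric series, bounded by $\lesssim 4^{-m}s_m$. This in turn is $\lesssim$ the full $\fjump$-increment on the free interval $m$, hence $\lesssim$ the $f$-increment on that interval.
\end{itemize}
Summing, for $s\in(s_{n-1},s_n]$ the value $\fjump(s)$ is bounded by a constant times the sum of the $f$-increments over the free intervals lying below $s$. There is a partial last interval to handle: if $n$ is free, it is covered by the first bullet applied to $s$ itself. If $n$ is forced, it is absorbed by the geometric bound of the preceding free index. Finally, monotonicity of $f$ gives $\fjump(s)\lesssim f(s)$.

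The main obstacle I expect is exactly this bookkeeping for forced steps. A naive interval-by-interval comparison loses a constant factor at each forced step, and these factors would compound. The remedy is the geometric-series charging of each forced run to the preceding free interval, made possible by the ratio $3<4$. Care is also needed in the edge cases: an infinite $t_n$, and the first interval, where one uses $f'\le 1$ and $f'_+\ge\frac14$ on $[0,t_0)$.
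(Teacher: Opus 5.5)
There is one genuine error, though it is easy to repair. You treat $\fjump$ as continuous (``a concave piecewise linear function''). With the definition \eqref{eq:def fjump}, however, it jumps up by $s_{n-1}/4^{n}$ at each $s_{n-1}$: for $s\in(s_{n-1},s_n]$ one has $\fjump(s)-\fjump(s_{n-1})=s/4^n$, not $4^{-n}(s-s_{n-1})$. Your upper bound survives, because the true increments are only larger: $f(s)-f(s_{n-1})\le 4^{-n}(s-s_{n-1})\le \fjump(s)-\fjump(s_{n-1})$ still telescopes to $f\le\fjump$.

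The lower-bound step ``if $n$ is free, [the partial last interval] is covered by the first bullet applied to $s$ itself'' is false as stated for $n\ge1$. As $s\downarrow s_{n-1}$, the increment $\fjump(s)-\fjump(s_{n-1})\ge s_{n-1}/4^n$ stays bounded below, while $f(s)-f(s_{n-1})\to0$. The missing observation is that the jumps are small relative to the function. The jump at $s_{n-1}$ is $s_{n-1}/4^n\le\frac14\fjump(s_{n-1})$, and summing all jumps below $s$ gives $\fjump(s)-\fnojump(s)=\frac14\fjump(s_{n-1})\le\frac14\fjump(s)$. Here $\fnojump$ is the continuous piecewise linear function with $\fnojump(0)=0$ and slope $4^{-n}$ on $(s_{n-1},s_n)$. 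Hence $\frac34\fjump\le\fnojump\le\fjump$. What your bookkeeping actually proves is $\fnojump\lesssim f$, and with this comparison that yields $\fjump\lesssim f$. This comparison is a separate step in the paper's proof, and you need it too. Your forced-run geometric bound is unaffected, since $\sum_{j>m}3^{j-m}s_m/4^j\le 3s_m/4^m$ still holds.

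With that patch, your argument is correct, and it follows a different route from the paper's. The paper first modifies $f$ within a constant factor. It smooths $f$ to be $C^1$, then (\Cref{lemma:f deriv control} with $\alpha=3$, $\beta=4$) replaces it by a comparable function satisfying $f'(s)\ge\frac14 f'(s/3)$. With this regularity, the natural breakpoints $s_n=\inf\{s: f'(s)\le 4^{-n-1}\}$ automatically satisfy $s_n\ge 3s_{n-1}$. Moreover $f'\approx 4^{-n}=\fnojump'$ holds \emph{pointwise} on every $(s_{n-1},s_n)$, so one integration gives $f\approx\fnojump$.

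You instead keep $f$ as is and work with $f'_+$, so no smoothing is needed, and you enforce the ratio by hand via $s_n=\max(t_n,3s_{n-1})$. The price is that on forced intervals there is no pointwise lower bound on $f'_+$. You compensate by charging each forced run to the preceding free interval through a geometric series in $3/4$. Both routes use $3<4$ in the same way: the convergence of $\sum_k(3/4)^k$ is also what gives $h\le\frac{1}{1-\alpha/\beta}f$ in \Cref{lemma:f deriv control}. Your version is self-contained and avoids the two regularization lemmas. The paper's version front-loads the work into those lemmas, so the final comparison is a one-line derivative estimate with no case analysis.
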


\begin{proof}[Proof of \Cref{prop:1/f(1/r) to s_n} from Lemmas \ref{lemma:1/f(1/x)} and \ref{lemma:from f to s_n}]
Let $f:[0,\infty)\to [0,\infty)$ be a concave increasing function satisfying $\lim_{r \to \infty} f'(r) = 1$. Then by \Cref{lemma:1/f(1/x)}, there exists a concave increasing function $g : (0, \infty) \to (0, \infty)$ such that $g(r) \approx 1/f(1/r)$. The condition $\lim_{r \to \infty} f'(r) = 1$ implies that $g(0) = 0$ and $g'(0) = 1$. Thus, we may apply \Cref{lemma:from f to s_n} to $g$ to get a sequence $0 = s_{-1} < s_0 < s_1 < s_2 < \ldots$ such that $\frac{s_{n+1}}{s_{n}}\ge 3$ and $g(s) \approx \fjump(s)$. Thus, we have $f(r) \approx 1/(g(1/r)) \approx 1/\fjump(1/r)$, as desired.
\end{proof}

The remainder of this section will be the proofs of \Cref{lemma:1/f(1/x)} and \Cref{lemma:from f to s_n}. For that, we will need some auxiliary lemmas.

\begin{lemma}
\label{lemma:increasing star shaped}
Let $\cC$ be the set of all functions $\phi : (0, \infty) \to (0, \infty)$ such that $\phi(x)$ is increasing and $\phi(x)/x$ is non-increasing. Then:

\begin{enumerate}
    \item If $\phi : (0, \infty) \to (0, \infty)$ is increasing and concave, then $\phi \in \cC$.
    \item If $\phi \in \cC$, then $\psi(x) := 1/\phi(1/x)$ satisfies $\psi \in \cC$.

    \item If $\phi \in \cC$, then 
    \[
    (1-t) \phi(x) + t\phi(y) \leq 2 \phi((1-t)x+ty) \qquad\text{for all $x,y \in (0,\infty)$ and $t \in [0,1]$}
    \]
\end{enumerate}
\end{lemma}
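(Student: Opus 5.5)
The three parts are elementary, and I would prove them in order, each directly from the definitions.

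For (1), let $\phi$ be increasing and concave on $(0,\infty)$ with $\phi>0$. Fix $0<x<y$ and let $\epsilon\to0^+$. Writing $x$ as a convex combination of $\epsilon$ and $y$, concavity gives
\[
\phi(x)\ \ge\ \frac{y-x}{y-\epsilon}\,\phi(\epsilon)+\frac{x-\epsilon}{y-\epsilon}\,\phi(y)\ \ge\ \frac{x-\epsilon}{y-\epsilon}\,\phi(y).
\]
The second inequality uses $\phi(\epsilon)>0$. Letting $\epsilon\to0$ yields $\phi(x)/x\ge\phi(y)/y$, so $\phi(x)/x$ is non-increasing. Monotonicity of $\phi$ is part of the hypothesis, hence $\phi\in\cC$.

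For (2), let $\phi\in\cC$ and put $\psi(x)=1/\phi(1/x)$. If $x<y$ then $1/x>1/y$, so $\phi(1/x)\ge\phi(1/y)$ and therefore $\psi(x)\le\psi(y)$. Strictness transfers in the same way, so $\psi$ is increasing. For the second condition, write
\[
\frac{\psi(x)}{x}=\frac{1}{x\,\phi(1/x)}=\frac{1}{\phi(u)/u}\qquad\text{with } u=\frac1x.
\]
As $x$ increases, $u$ decreases, so $\phi(u)/u$ does not decrease. Hence $\psi(x)/x$ is non-increasing.

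For (3), assume without loss of generality $x\le y$ and let $z=(1-t)x+ty\in[x,y]$. Since $\phi$ is increasing, $\phi(x)\le\phi(z)$, so $(1-t)\phi(x)\le\phi(z)$. Since $\phi(w)/w$ is non-increasing and $z\le y$, we get $\phi(y)\le (y/z)\,\phi(z)$. Moreover $ty\le z$, so
\[
t\,\phi(y)\le \frac{ty}{z}\,\phi(z)\le\phi(z).
\]
Adding the two bounds gives $(1-t)\phi(x)+t\phi(y)\le 2\phi(z)$. The case $x>y$ follows by swapping the roles of $x,y$ and $t\leftrightarrow 1-t$.

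There is no real obstacle here. The only point needing care is (1), where positivity of $\phi$ is needed to discard the $\phi(\epsilon)$ term in the limit; alternatively one can use $\liminf_{\epsilon\to0}\phi(\epsilon)\ge0$.
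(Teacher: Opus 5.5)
Your proof is correct and follows essentially the same route as the paper's for all three parts. The only cosmetic difference is in (1): you apply concavity at $\epsilon<x<y$ and let $\epsilon\to0^+$, whereas the paper first extends $\phi$ continuously to $0$ with $\phi(0)\ge 0$ and applies concavity at $0<x<y$ directly.
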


Property (3) can be viewed as a ``partial converse'' to (1): functions in $\cC$ are ``concave up to a factor of $2$.''

\begin{proof}
For (1), note that if $\phi : (0, \infty) \to (0, \infty)$ is increasing and concave, then we can extend $\phi$ continuously to $0$, with $\phi(0) \geq 0$. By concavity applied to $0 < x < y$, we have $\phi(x) \geq \frac{x}{y} \phi(y) + (1-\frac{x}{y})\phi(0) \geq \frac{x}{y} \phi(y)$. This proves (1). (2) is immediate from the observations that $x \mapsto 1/x$ is decreasing and  $\frac{\psi(x)}{x} = (\frac{\phi(1/x)}{1/x})^{-1}$.  For (3), let $x,y \in (0, \infty)$ and $t \in [0,1]$. Without loss of generality, assume $x \leq y$, so that $x \leq (1-t)x + ty \leq y$. Since $\phi$ is increasing,
\[
(1-t) \phi(x) \leq \phi(x) \leq \phi((1-t)x + ty)
.
\]
Since $z \mapsto \phi(z)/z$ is non-increasing,
\[
t\phi(y) 
\leq
\frac{ty}{(1-t)x + ty} \phi((1-t)x + ty)
\leq
\phi((1-t)x + ty)
\]
Combining these gives (3).
\end{proof}

\begin{proof}[Proof of \Cref{lemma:1/f(1/x)}]
Let  $f : (0, \infty) \to (0, \infty)$ be a concave increasing function. By \Cref{lemma:increasing star shaped}(1), $f \in \cC$. By \Cref{lemma:increasing star shaped}(2), $h(x) := 1/f(1/x)$ satisfies $h \in \cC$. Now define the upper concave envelope of $h$:
\[
g(z) = \sup\{ (1-t)h(x) + t h(y) : x,y \in (0,\infty), t \in [0,1], (1-t)x + ty = z\}.
\]
It is clear that $h(x) \leq g(x)$. Furthermore, by \Cref{lemma:increasing star shaped}(3), $g(x) \leq 2h(x)$. This proves the lemma (after redefining $g$ by a factor of $2$).
\end{proof}

\begin{lemma}
\label{lemma:wlog f C^1}
If $f : (0, \infty) \to (0, \infty)$ is concave and increasing, then there exists a $g : (0, \infty) \to (0, \infty)$ which is concave, increasing, and $C^1$, and such that $f(s) \approx g(s)$.
\end{lemma}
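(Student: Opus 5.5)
The plan is to smooth $f$ by averaging it over a multiplicative window. Define
\[
g(s) = \frac{1}{s}\int_s^{2s} f(t)\,dt = \int_1^2 f(us)\,du .
\]
This $g$ is automatically comparable to $f$, increasing and concave. The only issue is $C^1$ regularity, and for that I will use a slightly different averaging that makes the derivative visibly continuous.

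\emph{Comparability.} Since $f$ is increasing, $f(s)\le f(us)$ for $u\in[1,2]$, so $g(s)\ge f(s)$. Since $f(x)/x$ is non-increasing (\Cref{lemma:increasing star shaped}(1)), $f(us)\le u f(s)\le 2f(s)$, so $g(s)\le 2f(s)$.

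\emph{Monotonicity and concavity.} For each fixed $u$, the map $s\mapsto f(us)$ is increasing and concave. An average of increasing concave functions is again increasing and concave, so the same holds for $g$.

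\emph{Regularity.} Differentiating under the integral sign is awkward because $f'$ may jump. A cleaner choice is the additive Steklov average
\[
g_1(s)=\frac1s\int_0^s f(t)\,dt ,
\]
using the continuous extension of $f$ to $0$ with $f(0)\ge 0$. Since $f$ is continuous on $(0,\infty)$, the fundamental theorem of calculus shows that
\[
g_1'(s)=\frac{f(s)-g_1(s)}{s}
\]
is continuous, so $g_1\in C^1$. Also $g_1(s)=\int_0^1 f(us)\,du$ is an average of the concave increasing functions $s\mapsto f(us)$, hence concave and increasing, and $g_1\le f$. The remaining point is a lower bound: for $u\in[1/2,1]$, monotonicity gives $f(us)\ge f(s/2)\ge f(s)/2$, using $f(x)/x$ non-increasing. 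Hence
\[
g_1(s)\ge \tfrac12\cdot\tfrac12 f(s)=\tfrac14 f(s),
\]
so $f\approx g_1$.

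\emph{Positivity.} Since $f>0$ on $(0,\infty)$, we get $g_1(s)>0$ for every $s>0$.

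The step I expect to need the most care is the regularity claim: making sure the continuous extension of $f$ to $0$ is legitimate (concave increasing functions have a finite nonnegative limit at $0^+$) and that the integral is finite. Both follow from monotonicity and boundedness on $(0,s]$.
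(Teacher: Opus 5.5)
Your proof is correct and is essentially the paper's argument: the paper also averages the dilates $s\mapsto f(us)$, but against a smooth bump supported in $(1,2)$, which gives $f\le g\le 2f$ with regularity coming from the kernel. Your uniform average over $u\in[0,1]$ instead gets $C^1$ from the fundamental theorem of calculus and gives $\tfrac14 f\le g_1\le f$. Incidentally, your first candidate $g(s)=s^{-1}\int_s^{2s}f(t)\,dt$ is already $C^1$ by the same fundamental-theorem-of-calculus argument, since $f$ is continuous on $(0,\infty)$, so the switch to $g_1$ was unnecessary, though harmless.
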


\begin{proof}
Let $\phi$ be a smooth bump function supported in $(1,2)$ with $\int_1^2 \phi(t) \, dt = 1$. Take $g(x) = \int_1^2 f(xt) \phi(t) \, dt$. It is clear that $g$ is concave, increasing, and $C^1$.

We claim that $f(x) \leq g(x) \leq 2f(x)$. Since $f$ is increasing, we have $f(x) \leq g(x)$. To show $g(x) \leq 2f(x)$, note that by \Cref{lemma:increasing star shaped}, if $x > 0$ and $t\geq 1$, then $\frac{f(xt)}{xt} \leq \frac{f(x)}{x}$. Thus $g(x) = \int_1^2 f(xt) \phi(t) \, dt \lesssim \int_1^2 t f(x) \, dt \leq 2f(x)$, as desired.
\end{proof}

\begin{lemma}
\label{lemma:f deriv control}
Let $f:[0,\infty)\to [0,\infty)$ be a concave $C^1$ increasing function satisfying $f(0)=0$. Then for any $1 \leq \alpha < \beta$, there exists a function $h : [0, \infty) \to [0,\infty)$, which is also a concave $C^1$ increasing function satisfying $h(0)=0$, and with the additional properties 
\[
h'(0) = f'(0), 
\qquad 
h'(s) \geq \frac{1}{\beta} h'(s/\alpha), 
\qquad\text{and}\qquad 
f(s) \leq h(s) \leq \frac{1}{1-\alpha/\beta} f(s).
\]
\end{lemma}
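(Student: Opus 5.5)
The plan is to build $h$ as a weighted sum of rescaled copies of $f$, so that the derivative condition becomes a shift in a geometric series. Concretely, I will define $h$ through its derivative:
\[
h(s)=\Bigl(1-\frac1\beta\Bigr)\sum_{k\ge 0}\Bigl(\frac{\alpha}{\beta}\Bigr)^{k} f\bigl(s/\alpha^{k}\bigr),
\qquad\text{so that}\qquad
h'(s)=\Bigl(1-\frac1\beta\Bigr)\sum_{k\ge 0}\beta^{-k} f'\bigl(s/\alpha^{k}\bigr).
\]
The normalizing factor $1-\frac1\beta$ is chosen precisely so that $h'(0)=(1-\frac1\beta)\sum_k\beta^{-k}f'(0)=f'(0)$. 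Clearly $h(0)=0$.

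The first step is convergence and regularity. Since $f$ is concave, $f'$ is non-increasing, so $0\le f'\le f'(0)$. The derivative series is therefore dominated by $f'(0)\sum_k\beta^{-k}$ and converges uniformly on $[0,\infty)$. The series for $h$ converges locally uniformly, using $f(s/\alpha^k)\le f(s)$ and $\alpha/\beta<1$. Hence $h$ is $C^1$ with the displayed derivative. Each summand $f'(s/\alpha^k)$ is non-negative and non-increasing in $s$, so $h$ is increasing and concave. (If $f'(0)=\infty$ were allowed, we would only need the identity $h'(0)=f'(0)$ in the extended sense; I expect the paper assumes $f'(0)<\infty$, as in \Cref{lemma:from f to s_n}.)

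The second step is the derivative condition, which is just an index shift:
\[
h'(s/\alpha)=\Bigl(1-\tfrac1\beta\Bigr)\sum_{k\ge0}\beta^{-k}f'(s/\alpha^{k+1})=\beta\Bigl(1-\tfrac1\beta\Bigr)\sum_{k\ge1}\beta^{-k}f'(s/\alpha^{k})\le \beta\, h'(s).
\]

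The third step is the two-sided comparison with $f$, and it is where a little care is needed. For the lower bound I use an Abel summation in the derivative:
\[
h'(s)=f'(s)+\sum_{k\ge1}\beta^{-k}\bigl(f'(s/\alpha^k)-f'(s/\alpha^{k-1})\bigr).
\]
Every bracket is $\ge0$, because $s/\alpha^k\le s/\alpha^{k-1}$ and $f'$ is non-increasing. Thus $h'\ge f'$, and integrating from $0$ gives $h\ge f$. For the upper bound, monotonicity of $f$ gives $f(s/\alpha^k)\le f(s)$ (this uses $\alpha\ge1$), so
\[
h(s)\le\Bigl(1-\tfrac1\beta\Bigr)\frac{f(s)}{1-\alpha/\beta}\le\frac{f(s)}{1-\alpha/\beta}.
\]

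The main obstacle was reconciling the normalization $h'(0)=f'(0)$ with the lower bound $h\ge f$. A naive unnormalized sum would give $h\ge f$ but the wrong value $h'(0)=\frac{\beta}{\beta-1}f'(0)$. The Abel-summation rewriting shows that the normalized sum still dominates $f$ pointwise in the derivative, which resolves this.
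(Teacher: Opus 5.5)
Your proof is correct and is essentially the paper's argument. The paper defines $h'(s)=\sup_{k\ge 0}\beta^{-k}f'(s/\alpha^{k})$ rather than your normalized sum, then uses the same index shift for $h'(s)\ge \frac{1}{\beta}h'(s/\alpha)$ and the same bound $h(s)\le\sum_{k}(\alpha/\beta)^{k}f(s/\alpha^{k})\le \frac{1}{1-\alpha/\beta}f(s)$. The sup gives $h'(0)=f'(0)$ and $h'\ge f'$ for free from the $k=0$ term, while your sum makes continuity and monotonicity of $h'$ immediate by uniform convergence. Your Abel summation could also be replaced by noting that $h'(s)$ is an average, with weights $(1-\frac{1}{\beta})\beta^{-k}$ summing to $1$, of the numbers $f'(s/\alpha^{k})\ge f'(s)$.
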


\begin{proof}
Define $h(s)$ by $h(0) = 0$ and
\begin{align}
\label{eq:def h prime}
h'(s)
:=
\sup_{k \geq 0} \frac{1}{\beta^k} f'\left(\frac{s}{\alpha^k}\right)
\end{align}
Since the $k$th term in the supremum is bounded above by $\frac{f'(0)}{\beta^k}$, the supremum is finite for each $s \in (0, \infty)$. Then since each term in the supremum is non-increasing, non-negative, and continuous, $h'$ is as well. 

By considering the $k=0$ and $k \geq 1$ cases of the supremum separately, we have 
\[
h'(s) \geq f'(s)
\qquad\text{and}\qquad
h'(s) \geq \frac{1}{\beta}h'(s/\alpha)
.
\]
By integrating, we have $h(s) \geq f(s)$. The definition of $h$ also implies $h'(0) =f'(0)$.

It remains to show $h(s) \leq \frac{1}{1-\alpha/\beta} f(s)$. For this, we have
\begin{align*}
h(s)
=
\int_0^s h'(t) \, dt
\stackrel{\eqref{eq:def h prime}}{\leq}
\sum_{k=0}^\infty
\int_0^s
\frac{1}{\beta^k} f'\left(\frac{t}{\alpha^k}\right)
\, dt
=
\sum_{k=0}^\infty 
\frac{\alpha^k}{\beta^k} f\left(\frac{s}{\alpha^k}\right)
\leq
\left(\sum_{k=0}^\infty 
\frac{\alpha^k}{\beta^k}\right) f(s),
\end{align*}
as desired.
\end{proof}

\begin{proof}[Proof of \Cref{lemma:from f to s_n}]
We may assume $f$ is $C^1$. By applying \Cref{lemma:f deriv control} with $\beta = 4$ and $\alpha = 3$, we may further assume that $f$ satisfies \begin{align}
\label{eq:f' lower bound}
f'(s) \geq \frac{1}{4}f'(\frac{s}{\alpha}) \qquad\text{for all } s \geq 0.
\end{align}
Since $f$ is concave and increasing, the derivative $f'$ is positive and non-increasing. We define $(s_n)_{n=0}^\infty$ as follows:
\begin{equation}
\label{eq:def s_n}
    s_n = \inf\{s \geq 0 : f'(s) \leq 4^{-(n+1)}\} \in (0, \infty].
\end{equation}
Since $f'$ is non-increasing, the sequence $s_n$ is increasing. By \eqref{eq:f' lower bound}, we also have $s_n \geq \alpha s_{n-1}$.

Note that $\fjump$ is piecewise linear with a jump of size $\frac{s_j}{4^{j+1}}$ at $s=s_j$ for every $j$. With that in mind, we define $\fnojump : [0, \infty) \to [0, \infty)$ to be the unique continuous piecewise linear function with $\fnojump(0) = 0$ and $\fnojump'(s) = 4^{-n}$ for $s \in (s_{n-1}, s_{n})$. In the language of distributions (or Lebesgue decomposition),
\begin{align}
\label{eq:jump no jump derivative}
\fjump'
=
\fnojump'
+
\sum_{j=0}^\infty \frac{s_j}{4^{j+1}} \delta_{s_j}
.
\end{align}
We claim that $\fjump(s)\approx \fnojump(s).$ Indeed, integrating \eqref{eq:jump no jump derivative} and using $\fjump(0) = \fnojump(0)$ gives, for $s \in (s_{n-1}, s_n]$,
\[
\fjump(s)
-
\fnojump(s)
=
\sum_{j=0}^{n-1} \frac{s_j}{4^{j+1}} 
=
\frac{1}{4}\fjump(s_{n-1})
\leq
\frac{1}{4}\fjump(s).
\]
This shows that $\frac{3}{4} \fjump(s) \leq \fnojump(s) \leq \fjump(s)$.

It remains to show that $f(s)\approx \fnojump(s)$. Suppose $s \in (s_{n-1},s_n)$. Then \eqref{eq:def s_n} implies $4^{-(n+1)} \leq f'(s) \leq 4^{-n}$ (for $n = 0$, the upper bound follows from $f'(0) = 1$). Thus 
\[
f'(s) \approx 4^{-n} = \fnojump'(s).
\]
Integrating this gives $f(s) \approx \fnojump(s)$.
\end{proof}

\section{Estimate of analytic capacity}\label{sec:analytic-capacity}
In this section we estimate the analytic capacity of disc-like Cantor sets, Theorem \ref{thm:anal-cap}.

We will follow the proof of analogous estimates for the deterministic non-homogeneous four corner Cantor sets, see Section 4.7 in \cite{tolsa2014analytic}. The main tool is a characterization of analytic capacity due to Tolsa \cite{tolsa1999,tolsa2002,tolsa2003acta}, which involves \emph{Menger curvature}.
Given a Radon measure $\mu$ on $\R^2$ and $x\in\R^2$ we define
\begin{equation*}
    c^2_\mu(x) \coloneqq \iint \frac{1}{R(x,y,z)^2}\, d\mu(y)d\mu(z),
\end{equation*}
where $R(x,y,z)$ is the radius of the circle passing through $x, y$ and $z$ (for collinear $x,y, z$, we set $R(x,y,z)=\infty$). We also set $c_\mu(x)\coloneqq (c^2_\mu(x))^{1/2}.$
For more information and history of Menger curvature we refer to the monograph \cite{tolsa2014analytic}. 

The following characterization of analytic capacity was shown in \cite{tolsa1999, tolsa2003acta}, see also \cite[(2.4)]{tolsa2003acta} or \cite[Corollary 4.17]{tolsa2014analytic}. The characterization involves the potential
\begin{equation*}
    U_\mu(x)\coloneqq \sup_{s>0}\theta_\mu(x,s) + c_\mu(x),
\end{equation*}
introduced by Verdera \cite{verdera2000}. Recall that $\theta_\mu(x,s)=\mu(B(x,s))/s$.
\begin{theorem}\label{thm:cap-char}
    For any compact set $E\subset\R^2$ we have
    \begin{equation*}
        \gamma(E)\approx \sup_\nu \nu(E)
    \end{equation*}
    where the supremum is taken over all Borel measures $\nu$ on $E$ such that for all $x\in\spt\nu$
    \begin{equation}\label{eq:beta-est-AT}
        U_\nu(x)\le 1.
    \end{equation}
\end{theorem}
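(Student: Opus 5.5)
Theorem~\ref{thm:cap-char} is a known deep result, so the plan is to assemble it from the literature rather than prove it from scratch. Only the passage from a global curvature condition to the pointwise potential condition \eqref{eq:beta-est-AT} needs an argument. I would proceed in three steps.

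\textbf{Step 1 (cited).} By Tolsa's theorem on the comparability $\gamma(E)\approx\gamma_+(E)$ \cite{tolsa2003acta}, it suffices to prove the statement with $\gamma$ replaced by $\gamma_+$. Here $\gamma_+$ is the capacity defined through Cauchy transforms of positive measures. By the Melnikov–Verdera identity and its consequences \cite{tolsa1999, tolsa2014analytic},
\[
\gamma_+(E)\approx\sup\{\mu(E):\ \spt\mu\subset E,\ \mu(B(x,s))\le s \ \forall x,s,\ c^2(\mu)\le\mu(E)\},
\]
where $c^2(\mu)=\int c_\mu^2\,d\mu$. I take this characterization as the starting point.

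\textbf{Step 2 (lower bound, $\gamma(E)\gtrsim\sup_\nu\nu(E)$).} Let $\nu$ satisfy $U_\nu\le1$ on $\spt\nu$. Then $\sup_s\theta_\nu(x,s)\le 1$ on $\spt\nu$. Any ball $B(y,s)$ meeting $\spt\nu$ at some point $x$ satisfies $B(y,s)\subset B(x,2s)$, so $\nu$ has linear growth with constant $2$. Moreover $c^2(\nu)=\int c_\nu^2\,d\nu\le\nu(E)$. Replacing $\nu$ by $\nu/2$ only improves both conditions (the curvature term scales quadratically). Hence $\nu/2$ is admissible in Step 1, and $\gamma(E)\gtrsim\nu(E)$.

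\textbf{Step 3 (upper bound, $\gamma(E)\lesssim\sup_\nu\nu(E)$).} Let $\mu$ be admissible in Step 1. Since $\int c_\mu^2\,d\mu\le\mu(E)$, Chebyshev gives that $F=\{x\in E: c_\mu^2(x)\le 2\}$ satisfies $\mu(F)\ge\mu(E)/2$. Set $\nu=\mu|_F/C$ with $C$ a large absolute constant.
\begin{itemize}
\item For every $x$, $\theta_\nu(x,s)\le 1/C$.
\item For $x\in F$, $c_\nu^2(x)\le c_\mu^2(x)/C^2\le 2/C^2$.
\end{itemize}
Thus $U_\nu\le 1/C+\sqrt2/C\le1$ on $F$, and $\nu(E)\ge\mu(E)/(2C)$.

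The point requiring care is that \eqref{eq:beta-est-AT} is imposed on all of $\spt\nu=\overline F$, not only on $F$. To handle this, I will show that $x\mapsto c_\mu^2(x)$ is lower semicontinuous. This follows from Fatou's lemma, because $1/R(x,y,z)^2$ is lower semicontinuous in $x$ for fixed $y,z$. Consequently the sublevel set $F$ is closed, so $\spt\nu\subset F$ and the bound holds on the whole support.

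The genuine obstacle is Step 1, namely $\gamma\approx\gamma_+$. This is Tolsa's semiadditivity theorem, and I would simply invoke it as the paper does.
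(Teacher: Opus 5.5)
Your proposal is correct and essentially matches the paper, which gives no proof and simply quotes this characterization (including the comparability $\gamma\approx\gamma_+$) from Tolsa's work. Your added bridge from the energy condition $c^2(\mu)\le\mu(E)$ to the pointwise condition on all of $\spt\nu$ is sound: it uses Chebyshev, together with lower semicontinuity of $c_\mu^2$ via Fatou under the convention $1/R=0$ for degenerate triples, which makes the sublevel set $F$ closed.
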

Theorem \ref{thm:cap-char} is a very convenient tool for proving lower bounds on analytic capacity. We also have a ``dual'' estimate for the analytic capacity which is more convenient for proving upper bounds.
It was shown in \cite[Theorem 3.3]{tolsa2002}, \cite{tolsa2003acta}, see also \cite[Theorem 4.24]{tolsa2014analytic}.
\begin{theorem}\label{thm:tolsa}
    For any compact set $E\subset\R^2$ we have
    \begin{equation*}
        \gamma(E)\approx \inf_\nu \nu(\R^2)
    \end{equation*}
    where the infimum is taken over all finite Borel measures $\nu$ on $\R^2$ such that for all $x\in E$ we have
    \begin{equation}\label{eq:beta-est-AT2}
        U_\nu(x)\ge 1.
    \end{equation}
\end{theorem}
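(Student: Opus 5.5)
Theorem \ref{thm:tolsa} is a cited result, so in the paper we would simply invoke it. If I had to prove it, I would derive it from Theorem \ref{thm:cap-char} by a duality (minimax) argument. The target is the comparability
\[
\sup\{\mu(E) : \spt\mu\subset E,\ U_\mu\le 1 \text{ on } \spt\mu\}\approx \inf\{\nu(\R^2) : U_\nu\ge 1 \text{ on } E\}.
\]
Here the left side is $\approx\gamma(E)$ by Theorem \ref{thm:cap-char}. Throughout I would discretize: cover $E$ by finitely many small squares and work with measures that are combinations of normalized Lebesgue measure on these squares. Compactness of $E$ and outer regularity of $\gamma$ (\cite[Proposition 1.7]{tolsa2014analytic}) let one pass to the limit at the end.

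\textbf{Easy direction ($\gamma\lesssim\inf$).} Fix $\mu$ admissible on the left and $\nu$ admissible on the right. Integrating $1\le U_\nu$ against $\mu$ gives
\[
\mu(E)\le\int M\nu\,d\mu+\int c_\nu\,d\mu, \qquad M\nu(x):=\sup_s\theta_\nu(x,s).
\]
\emph{Maximal term.} Compare $M\nu$ with a dyadic sum $\sum_Q \frac{\nu(3Q)}{\ell(Q)}\mathds{1}_Q$ over dyadic squares, with a suitable sparse selection so that the sum stays pointwise comparable to the supremum. Fubini then moves the kernel onto $\mu$, and the growth bound $\mu(B(x,s))\le s$ (from $U_\mu\le1$) yields $\int M\nu\,d\mu\lesssim\nu(\R^2)$.

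\emph{Curvature term.} By Cauchy--Schwarz,
\[
\int c_\nu\,d\mu\le \mu(E)^{1/2}\Bigl(\iiint c(x,y,z)^2\,d\mu(x)\,d\nu(y)\,d\nu(z)\Bigr)^{1/2}.
\]
The triple integral has one $\mu$ and two $\nu$ variables, and I would need to trade a $\nu$ for a $\mu$. Using the symmetry of $c(x,y,z)^2$ and the standard estimate for mixed curvature of measures with linear growth, the target is a bound by $\nu(\R^2)$ times a constant controlled by $\sup_{\spt\mu}U_\mu\le1$.

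\textbf{Hard direction ($\inf\lesssim\gamma$).} I would run a Hahn--Banach / von Neumann minimax argument in the discretized setting. Assume no $\nu$ with $\nu(\R^2)\le C\gamma(E)$ satisfies $U_\nu\ge1$ on $E$. The set of such $\nu$ is compact and convex. The map $(\nu,\mu)\mapsto\int U_\nu\,d\mu$ is linear in $\mu$. In $\nu$, it is convex in the $c_\nu$ part, since $\nu\mapsto c_\nu(x)$ is the square root of a positive quadratic form and hence a norm. It is also convex in the $M\nu$ part, as a supremum of linear functionals.

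The obstruction is that this is the wrong convexity for minimax: we need concavity in $\nu$ on the minimizing side. So I would first replace $U_\nu$ by a comparable potential $\widetilde U_\nu$ that is affine or concave in $\nu$. The plan is to linearize $c_\nu(x)$ as $\sup_{\|g\|\le1}\iint g\,c(x,\cdot,\cdot)\,d\nu\,d\nu$ restricted to a fixed reference, and to replace $M\nu$ by a suitable dyadic sum.

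Minimax would then produce a probability measure $\sigma$ on $E$ with $\int\widetilde U_\nu\,d\sigma\lesssim \nu(\R^2)/(C\gamma(E))$ for all $\nu$. Testing with $\nu=\sigma$ and applying a Chebyshev-type restriction to the set where $\widetilde U_\sigma$ is bounded, I would rescale to obtain $\mu$ with $U_\mu\le1$ on its support and $\mu(E)\gtrsim C\gamma(E)$. By Theorem \ref{thm:cap-char} this is a contradiction once $C$ is large.

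The main obstacle is this linearization step, because $U_\nu$ is genuinely nonlinear. If it fails, I would fall back on Tolsa's route: identify both sides with $\gamma_+(E)$ via his $T(1)$-type theorem and the comparability $\gamma\approx\gamma_+$. This is what \cite{tolsa2002,tolsa2003acta} actually do, and I expect that to be necessary.
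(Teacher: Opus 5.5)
Invoking the theorem is what the paper does. It gives no proof and imports the statement from Tolsa's work, and your closing fallback (pass through $\gamma_+$ and Tolsa's $T(1)$-type theory) is that same citation. The self-contained duality argument you sketch does not work, and it fails in both directions.

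\textbf{Your ``easy'' direction ($\gamma\lesssim\inf$).} Integrating $U_\nu\ge1$ against $\mu$ loses too much.
\begin{itemize}
\item \emph{Maximal term.} The bound $\int M\nu\,d\mu\lesssim\nu(\R^2)$ is false, and no sparse selection can rescue it. Take $E=[-\frac12,\frac12]\times\{0\}$ and $\mu=\frac12\cH^1|_E$, so $M\mu\le1$ and $c_\mu\equiv0$. Take $\nu=\delta_0$, so $U_\nu(x)\ge M\nu(x)=1/|x|\ge1$ on $E$. Then $\int M\nu\,d\mu=\infty$. This part is easily repaired with weak-type information only: take a Vitali covering of $\{M\nu>\frac12\}$ by balls with $\nu(B_i)\ge r_i/2$, and use $\mu(5B_i)\lesssim r_i$.
\item \emph{Curvature term.} This cannot be repaired along your lines. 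The target $\iiint c^2\,d\mu\,d\nu\,d\nu\lesssim\nu(\R^2)$ is quadratic in $\nu$ on the left and linear on the right. The hypothesis $U_\nu\ge1$ on $E$ survives $\nu\mapsto\lambda\nu$ for every $\lambda\ge1$, so no such bound can hold. Concretely, with $\mu$ as above and $\nu=\lambda(\delta_p+\delta_q)$, $p=(-\frac12,1)$, $q=(\frac12,1)$, the triple integral is $\approx\lambda^2$ while $\nu(\R^2)=2\lambda$.
\end{itemize}
The symmetry of $c^2$ changes which variable sits at the vertex, not the number of $\nu$-factors. What this half actually requires is a capacitary weak-type estimate $\mu(\{c_\nu>t\})\lesssim\nu(\R^2)/t$, uniformly over $\mu$ with $U_\mu\le1$ on $\spt\mu$. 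By Theorem \ref{thm:cap-char} this is essentially the curvature part of the inequality you are trying to prove, and it does not follow from Fubini and Cauchy--Schwarz.

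\textbf{The other direction ($\inf\lesssim\gamma$).} The step you flag cannot be carried out as proposed: no potential $\widetilde U_\nu$ pointwise comparable to $U_\nu$ can be concave in $\nu$. Suppose $aU_\nu\le\widetilde U_\nu\le bU_\nu$ and $\nu\mapsto\widetilde U_\nu(x)$ is concave. Jensen's inequality and the homogeneity $U_{\lambda\nu}=\lambda U_\nu$ then give $U_{\nu_1+\cdots+\nu_N}(x)\ge\frac ab\sum_{k=1}^N U_{\nu_k}(x)$ for every $N$. Now take $\nu_k=2^{-k}\delta_{x+2^{-k}e}$ with $e$ a unit vector. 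Each $U_{\nu_k}(x)=M\nu_k(x)=1$. But all the points are collinear with $x$, so $U_{\nu_1+\cdots+\nu_N}(x)=M(\nu_1+\cdots+\nu_N)(x)\le2$, a contradiction for large $N$.

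Your specific candidates show the same defect.
\begin{itemize}
\item A fixed dyadic sum $\sum_Q\frac{\nu(3Q)}{\ell(Q)}\mathds{1}_Q$ is bounded below by a multiple of the Riesz potential $\int\frac{d\nu(y)}{|x-y|}$. That potential is infinite on a segment carrying $\cH^1$, although $M\nu\le2$ there.
\item A sparse selection depends on $\nu$ and destroys linearity.
\item A supremum over test functions $g$ of expressions linear in $\nu$ is convex, not concave.
\end{itemize}
Incidentally, $\nu\mapsto c_\nu(x)$ is not a norm either. If $\nu_1,\nu_2$ are arclength on segments through $x$ lying on different lines, then $c_{\nu_1}(x)=c_{\nu_2}(x)=0<c_{\nu_1+\nu_2}(x)$.

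So producing $\nu$ with $\nu(\R^2)\lesssim\gamma(E)$ and $U_\nu\ge1$ on $E$ needs a genuinely different idea. As you anticipate, both directions have to be taken from Tolsa's theory, which is what the paper does by citing it.
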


We proceed to prove \eqref{eq:analytic-cap}. Recall the notation $\theta_k=4^{-k}r_k^{-1}.$ 
For the sake of brevity, set
    \begin{equation*}
        A\coloneqq \left(\sum_{k=0}^{n}\theta_k^2\right)^{1/2}.
    \end{equation*}
We remark in passing that
\begin{equation}\label{eq:Aest}
    A\ge \theta_k,\quad\text{for } k\in\{0,\dots,n\}.
\end{equation}
Our goal is to show $\gamma(K_n)\approx A^{-1}.$ 

Recall that $\mu_n$ denotes the natural probability measure on $K_n$
\begin{equation*}
    \mu_n = \mathcal{L}^2(K_n)^{-1}\mathcal{L}^2|_{K_n}.
\end{equation*}
Consider the normalized measure 
    \begin{equation}\label{eq:nu-def}
        \nu = A^{-1} \mu_n.
    \end{equation}
We claim that
\begin{equation}\label{eq:Uofnu}
    U_\nu(x)\approx 1\quad\text{for }x\in K_n.
\end{equation}
The estimate above immediately implies Theorem \ref{thm:anal-cap}.
\begin{proof}[Proof of Theorem \ref{thm:anal-cap} using \eqref{eq:Uofnu}]
    Observe that for $0<c\le 1$
    \begin{equation*}
        U_{c\nu}(x)\le cU_\nu(x),
    \end{equation*}
    and so applying Theorem \ref{thm:cap-char} to $c\nu$ with $c$ small enough (depending on the implicit constants in \eqref{eq:Uofnu}) we get $\gamma(K_n)\lesssim c\nu(K_n)\approx A^{-1}.$ Conversely, for $C\ge 1$ we have
    \begin{equation*}
        U_{C\nu}(x)\ge CU_\nu(x),
    \end{equation*}
    so that by \eqref{eq:Uofnu} we have \eqref{eq:beta-est-AT2} for $C\nu$, assuming $C$ is large enough. Thus, Theorem \ref{thm:tolsa} implies $\gamma(K_n)\gtrsim C\nu(K_n)\approx A^{-1}.$
\end{proof}

The remainder of this section is dedicated to proving \eqref{eq:Uofnu}. We begin by showing that $\nu$ has linear growth, which follows readily from the density estimates of \Cref{lem:dens-est}.
\begin{lemma}\label{lem:dens}
    For all $x\in K_n$ and $r > 0$, we have $\theta_\nu(x,r)\lesssim 1.$ (The implied constant is absolute.)
\end{lemma}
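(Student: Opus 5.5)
The plan is to read the bound off \Cref{lem:dens-est} using the normalization $\nu = A^{-1}\mu_n$, together with \eqref{eq:Aest}. Since $\theta_\nu(x,r) = A^{-1}\theta_{\mu_n}(x,r)$, it suffices to show that $\theta_{\mu_n}(x,r)\lesssim A$ for all $x\in K_n$ and $r>0$. Because $A\ge\theta_k$ for every $k\in\{0,\dots,n\}$, it is enough to bound $\theta_{\mu_n}(x,r)\lesssim \theta_k$ for a suitable index $k$ depending on $r$.

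First consider $r\in[r_k,r_{k-1}]$ with $0\le k\le n$; with the convention $r_{-1}=\infty$ these intervals cover $[r_n,\infty)$. For such $r$, \eqref{eq:densities} gives
\[
\theta_{\mu_n}(x,r)\approx \frac{1}{4^k r}\le \frac{1}{4^k r_k}=\theta_k\le A .
\]
Next consider $r\in(0,r_n]$. Here \eqref{eq:densities small r} gives
\[
\theta_{\mu_n}(x,r)\approx \frac{r}{4^n r_n^2}\le \frac{1}{4^n r_n}=\theta_n\le A .
\]
Combining the two cases, $\theta_\nu(x,r)\lesssim 1$ for all $x\in K_n$ and all $r>0$, and the implied constant is the absolute one coming from \Cref{lem:dens-est}.

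There is no real obstacle. The only point to check is that the scale ranges $(0,r_n]$ and $[r_k,r_{k-1}]$ for $k=0,\dots,n$ together cover all of $(0,\infty)$. This holds by the convention $r_{-1}=\infty$, and in the degenerate case $r_n=0$ the small-scale range is simply empty.
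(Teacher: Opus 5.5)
Your proposal is correct and follows the paper's proof: both deduce $\theta_{\mu_n}(x,r)\lesssim \max\{\theta_0,\dots,\theta_n\}\le A$ from \Cref{lem:dens-est} and \eqref{eq:Aest}, then divide by $A$. You simply write out the case split over the scale ranges $[r_k,r_{k-1}]$ and $(0,r_n]$ explicitly, which the paper leaves implicit.
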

\begin{proof}
    By \Cref{lem:dens-est}, for all $x \in K_n$ and $r > 0$, we have
    \[
    \theta_{\mu_n}(x,r)
    \lesssim
    \max\left\{\theta_0, \ldots, \theta_n\right\}
    \stackrel{\eqref{eq:Aest}}{\leq}
    A,
    \]
    so $\theta_\nu(x,r) \lesssim 1$. 
    
    
\end{proof}

A key property of sets $K_n$ is the following estimate on the curvature of $\nu$. The arguments are similar to those in the proof of \cite[Lemma 4.29]{tolsa2014analytic}. 

\begin{lemma}\label{lem:}
    For all $x\in K_n$, we have $c_\nu(x) \approx 1$. (The implied constant is absolute.)
\end{lemma}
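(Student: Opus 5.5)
We will show that for every $x\in K_n$
\begin{equation*}
c^2_{\mu_n}(x)\approx\sum_{k=0}^n\theta_k^2=A^2 .
\end{equation*}
Since $c^2_\nu=A^{-2}c^2_{\mu_n}$, this gives $c_\nu(x)\approx 1$. Fix $x\in K_n$ and let $D_k\ni x$ be the generation-$k$ disc containing $x$, for $k=0,\dots,n$. We decompose the double integral over pairs $(y,z)$ according to the \emph{scale} at which $y,z$ separate from $x$. The pieces are the annular regions $Q_k=(D_{k}\setminus D_{k+1})\times(D_k\setminus D_{k+1})$ together with the mixed regions. We also treat the bottom scale, $y,z\in D_n$, separately. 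Throughout we use two facts: the separation property \eqref{eq:separation}, and $\mu_n(D_\ii)=4^{-|\ii|}$.

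\textbf{Upper bound.} We use the standard formula $1/R(x,y,z)=2\dist(z,L_{xy})/(|x-z||y-z|)$, which gives $1/R\lesssim 1/\max(|x-y|,|x-z|,|y-z|)$ up to the comparable-distances case. The tool here is the classical estimate $c^2_\mu(x)\lesssim\int_0^\infty\theta_\mu(x,r)^2\,dr/r$ for measures whose densities are comparable across scales. We will prove it directly by splitting according to which of $|x-y|$ and $|x-z|$ is larger and dyadically summing. This uses $R(x,y,z)\ge\frac12\max(|x-y|,|x-z|)$ together with the bound on the ``smaller'' point's contribution, which involves $\theta$ times a ratio. Inserting the density bounds \eqref{eq:densities} and \eqref{eq:densities small r} from \Cref{lem:dens-est}, for $r\in[r_k,r_{k-1}]$ we get
\begin{equation*}
\int_{r_k}^{r_{k-1}}\frac{dr}{4^{2k}r^3}\approx\theta_k^2 .
\end{equation*}
For $r<r_n$ we get a contribution $\approx\theta_n^2$. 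Summing over $k$ yields $c^2_{\mu_n}(x)\lesssim A^2$. Care is needed because the crude bound $1/R\le 2/|y-z|$ is not summable by itself. We instead use $1/R\lesssim 1/\max$ distance, and argue that pairs where $|y-z|\ll\max(|x-y|,|x-z|)$ still have $R\gtrsim \max$.

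\textbf{Lower bound.} This is the main step and where the geometry of the children enters. For each $k\in\{1,\dots,n\}$, consider the generation-$k$ disc $D_k\ni x$ inside $D_{k-1}$. Among the three siblings of $D_k$ in $D_{k-1}$, pick two, $D'$ and $D''$. Their centers, together with the center of $D_k$, are three vertices of a square inscribed at distance $\approx r_{k-1}$. Such three vertices are far from collinear, with circumradius $\approx r_{k-1}$, and this holds regardless of the rotation $\omega$. Since every sibling has radius at most $r_{k-1}/3$, all triples $(x,y,z)\in D_k\times D'\times D''$ have $R(x,y,z)\lesssim r_{k-1}$ uniformly. We will verify this with a small perturbation check using admissibility. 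Then
\begin{equation*}
c^2_{\mu_n}(x)\ge\sum_{k=1}^{n}\mu_n(D')\,\mu_n(D'')\,r_{k-1}^{-2}\gtrsim\sum_{k=1}^n\frac{1}{4^{2k}r_{k-1}^2}\approx\sum_{k=0}^{n-1}\theta_k^2 .
\end{equation*}
The contributions over different $k$ come from disjoint regions, so they can be added.

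It remains to capture $\theta_n^2$, the bottom scale. Here we use $y,z\in D_n$ with $\mu_n$ proportional to area on a disc of radius $r_n$. For $y,z$ in a positive proportion of $D_n\times D_n$, we have $R(x,y,z)\lesssim r_n$. Hence this piece is $\gtrsim (4^{-n})^2r_n^{-2}=\theta_n^2$. Thus $c^2_{\mu_n}(x)\gtrsim A^2$.

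The expected obstacle is the careful upper bound. The lower bound only needs the robust non-collinearity of square vertices, which is rotation-invariant. For the upper bound, I would follow \cite[Lemma 4.29]{tolsa2014analytic} verbatim, replacing squares by discs.
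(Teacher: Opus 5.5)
\textbf{The gap: the intermediate-scale lower bound.} You claim that for a fixed pair of siblings $D',D''$ of $D_k$, every triple $(x,y,z)\in D_k\times D'\times D''$ satisfies $R(x,y,z)\lesssim r_{k-1}$, to be confirmed by a small perturbation check. Under admissibility this is false, because the perturbation is not small.

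Normalize $D_{k-1}=B(0,1)$ and put $t=r_k/r_{k-1}\le\frac13$. The three centers are vertices of a square of circumradius $1-t$. They therefore form an isosceles right triangle whose smallest altitude is $1-t$, while each disc has radius $t$. A line meeting all three discs exists exactly when $1-t\le 2t$, i.e.\ $t\ge\frac13$.

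Concretely, take $t=\frac13$ and centers $(\pm\frac23,0)$, $(0,\frac23)$. The horizontal line at height $\frac13$ is tangent to all three discs. Any three of the four children are congruent to this configuration by a rotation, so no choice of $D',D''$ avoids it. For $k=n$ these discs lie entirely in $K_n=\spt\mu_n$, so there are $x\in K_n$ and collinear triples with $1/R(x,y,z)=0$. For $t$ slightly below $\frac13$ the infimum of $r_{k-1}/R$ over such triples is $\lesssim 1-3t$, which is not an absolute constant. Hence your inequality $c^2_{\mu_n}(x)\gtrsim\sum_k\mu_n(D')\mu_n(D'')r_{k-1}^{-2}$ is not justified as written.

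\textbf{The fix: let the third disc depend on $y$.} This is what the paper does with $\wt D_j(x,y)$. For $x\in D_k$ and $y$ in a sibling, the line $L_{xy}$ meets two of the four children. Suppose it came within distance $\delta$ of both remaining children. Then all four centers would lie in a strip of width $2(r_k+\delta)$. But the square of centers has width at least its side $\sqrt2(r_{k-1}-r_k)$. So one of the two remaining siblings stays at distance at least $\frac{r_{k-1}-r_k}{\sqrt2}-r_k\ge\frac{\sqrt2-1}{3}r_{k-1}$ from $L_{xy}$. For $z$ in that sibling, $R(x,y,z)=\frac{|x-z|\,|y-z|}{2\dist(z,L_{xy})}\lesssim r_{k-1}$. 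Integrating $z$ over that sibling and $y$ over $D_{k-1}\setminus D_k$ gives the $k$-th term $\gtrsim 4^{-2k}r_{k-1}^{-2}$ with an absolute constant. This adaptive version works for all ratios below $1/(1+\sqrt2)$, whereas the fixed-pair version breaks exactly at $\frac13$.

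\textbf{The rest of your plan is sound.} The bottom-scale lower bound and the disjointness bookkeeping match the paper. Your upper bound is a valid alternative route. In fact $c^2_\mu(x)\lesssim\int_0^\infty\theta_\mu(x,r)^2\,\frac{dr}{r}$ holds for every measure, using only $R\ge\frac12\max(|x-y|,|x-z|)$ and Fubini, so no comparability hypothesis or special treatment of pairs with small $|y-z|$ is needed. Combined with the upper density bounds of Lemma~\ref{lem:dens-est}, it gives $c^2_{\mu_n}(x)\lesssim A^2$ in one step. The paper instead splits by the generation at which $z$ leaves $D_j(x)$ and uses \eqref{eq:separation}, reserving the density argument for the bottom scale.
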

Note that together with Lemma \ref{lem:dens}, this implies \eqref{eq:Uofnu} and finishes the proof of Theorem \ref{thm:anal-cap}.
\begin{proof}
    For any $x\in K_n$ and $j\in \{0,\dots n\}$ denote by $D_j(x)$ the unique disc $D_{\jj}$ with $|\jj|=j$ which contains $x.$ We begin by proving the upper bound on $c_\nu(x)$:
    \begin{multline*}
        c_\nu(x)^2 = \iint \frac{1}{R(x,y,z)^2}\, d\nu(y)d\nu(z)\\
        \le 2\sum_{j=0}^{n-1} \int_{D_{j}(x)\setminus D_{j+1}(x)}\int_{D_{j}(x)} \frac{1}{R(x,y,z)^2}\, d\nu(y)d\nu(z) + \int_{D_n(x)}\int_{D_n(x)}\frac{1}{R(x,y,z)^2}\, d\nu(y)d\nu(z)\\
        \eqcolon (I) + (II).
    \end{multline*}
    We estimate $(I)$ and $(II)$ separately. Using the elementary inequality $R(x,y,z)\ge |x-y|/2$, as well as the separation of discs noted in \eqref{eq:separation}, we get the upper bound on $I$ as follows:
    \begin{align*}
        (I) &\le 8\sum_{j=0}^{n-1} \int_{D_{j}(x)\setminus D_{j+1}(x)}\int_{D_{j}(x)} \frac{1}{|x-z|^2}\, d\nu(y)d\nu(z) 
        \\
        &\stackrel{\eqref{eq:separation}}{\lesssim} \sum_{j=0}^{n-1} \int_{D_{j}(x)\setminus D_{j+1}(x)}\int_{D_{j}(x)} \frac{1}{r_{j}^2}\, d\nu(y)d\nu(z) 
        \\
        &\le \sum_{j=0}^{n-1} \frac{\nu(D_j(x))^2}{r_{j}^2} = \sum_{j=0}^{n-1} \frac{A^{-2}4^{-2j}}{r_{j}^2} = A^{-2}\sum_{j=0}^{n-1}\theta_j^2.
    \end{align*}
    Concerning $(II)$, we use the symmetry of the integral and the inequality $R(x,y,z)\ge |x-y|/2$ to get
    \begin{multline*}
        (II)= 2\iint_{\substack{y,z\in D_n(x),\\ |x-y|\ge |x-z|}}\frac{1}{R(x,y,z)^2}\, d\nu(y)d\nu(z)\le  8\iint_{\substack{y,z\in D_n(x),\\ |x-y|\ge |x-z|}}\frac{1}{|x-y|^2}\, d\nu(y)d\nu(z)\\
        =\int_{y\in D_n(x)}\frac{\nu(B(x, |x-y|)}{|x-y|^2}\, d\nu(y) = 
        \frac{1}{A\mathcal{L}^2(K_n)}\int_{D_n(x)}\frac{\mathcal{L}^2(B(x, |x-y|)}{|x-y|^2}\, d\nu(y)\\
        \lesssim  \frac{\nu(D_n(x))}{A\mathcal{L}^2(K_n)} = \frac{4^{-n}}{ A^2 4^n \pi r_n^2} \approx A^{-2}\theta_n^2.
    \end{multline*}
   Putting together the estimates for $(I)$ and $(II)$ gives
    \begin{equation*}
        c_\nu(x)^2 \lesssim A^{-2}\sum_{j=0}^{n}\theta_j^2 = 1.
    \end{equation*}

    We move on to the lower bound on $c_\nu(x)$. We again start by dividing into the intermediate scales and the smallest scale:
    \begin{multline*}
        c_\nu(x)^2 = \iint \frac{1}{R(x,y,z)^2}\, d\nu(y)d\nu(z)\\
        \ge 
        \sum_{j=1}^{n}
        \int_{D_{j-1}(x)\setminus D_{j}(x)}
        \int_{D_{j-1}(x)\setminus D_{j}(x)} 
        \frac{1}{R(x,y,z)^2}\, d\nu(z)d\nu(y) + \int_{D_n(x)}\int_{D_n(x)}\frac{1}{R(x,y,z)^2}\, d\nu(y)d\nu(z)\\
        = (III) + (IV).
    \end{multline*}    
    To deal with $(III)$, fix $y \in D_{j-1}(x)\setminus D_{j}(x)$. Let $\wt D_j(x,y)$ denote one of the two discs of generation $j$ in $D_{j-1}(x)\setminus (D_{j}(x)\cup D_{j}(y))$ which maximizes the distance from the line passing through $x$ and $y$. For $z \in \wt D_j(x,y)$, we have $|x-y|\approx |x-z|\approx |y-z|\approx r_{j-1}$ by \eqref{eq:separation}. Furthermore, by elementary geometry, the distance from $z$ to the line through $x$ and $y$ is $\gtrsim r_{j-1}$, so $R(x,y,z) \approx r_{j-1}$. 
    We use this estimate to bound $(III)$:    
    \begin{multline*}
    (III)\ge \sum_{j=1}^{n}\int_{D_{j-1}(x)\setminus D_{j}(x)}\int_{\wt D_{j}(x,y)}\frac{1}{R(x,y,z)^2}\, d\nu(y)d\nu(z)
    \approx\sum_{j=1}^{n}\frac{4^{-2j}}{A^2r_{j-1}^2} \approx A^{-2}\sum_{j=0}^{n-1}\theta_j^2
    \end{multline*}
    
    To bound $(IV)$ we argue similarly. Let $y_0, z_0\in D_n(x)$ be such that $x, y_0, z_0$ are vertices of an equilateral triangle with sidelength $r_n/2,$ and such that $B(y_0, r_n/10)\subset D_n(x)$, $B(z_0, r_n/10)\subset D_n(x)$. As before, for all $y\in B(y_0, r_n/10),$ $z\in B(z_0, r_n/10),$ the distance from $z$ to the line through $x$ and $y$ is $\gtrsim r_{n}$, so $R(x,y,z) \approx r_{n}$,
    and so
    \begin{multline*}
        (IV)\ge \int_{B(y_0, r_n/10)}\int_{B(z_0, r_n/10)}\frac{1}{R(x,y,z)^2}\, d\nu(z)d\nu(y)\approx \frac{\nu(B(y_0, r_n/10))\nu(B(y_0, r_n/10))}{r_n^2}\\
        \approx \frac{A^{-2}\mathcal{L}^2(K_n)^{-2}r_n^4}{r_n^2}\approx A^{-2}4^{-2n}r_n^{-4}r_n^2= A^{-2}\theta_n^2.
    \end{multline*}
    Together with the estimate for $(III)$ this gives
    \begin{equation*}
        c_\nu(x)^2\gtrsim A^{-2}\sum_{j=0}^{n}\theta_j^2= 1.
    \end{equation*}
\end{proof}

\appendix

\section{Prescribing exact Favard length of a countable set}
\label{section:prescribe countable}

Let $N:(0,\infty)\to \mathbb{N}$ be a right-continuous, locally integrable, non-increasing function such that $N(r)=1$ for all sufficiently large $r$. Let $f(r):=2\int_0^r N(s)ds$. It can be easily shown that for every compact set $K$, the function $r\mapsto |K(r)|$ arises as an $f$ constructed as above. We now aim to show that this is a complete characterization.

\begin{proposition}
    Given $f$ as above, there exists a finite or countable set $K\subset \R$ such that $|K(r)|=f(r)$ for every $r>0$. 
\end{proposition}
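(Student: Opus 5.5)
The plan is to construct $K$ explicitly as a discrete set of points on the line whose pairwise gaps are prescribed by the jumps of $N$. First record how the measure of a neighbourhood behaves for a set of points. For a compact set $K\subset\R$, write $K(r)$ as a disjoint union of intervals (components). Every gap of $K$, meaning a bounded complementary interval of length $g$, is covered by the two adjacent points once $2r\ge g$. Hence for a discrete set $K=\{x_1<x_2<\dots\}$ with gaps $g_i=x_{i+1}-x_i$ we get
\[
|K(r)| = 2r + \sum_i \min(g_i, 2r).
\]
More generally, for a countable compact set with accumulation points, the same formula holds provided the gaps $g_i$ are the lengths of all bounded complementary intervals and $\sum_i g_i<\infty$. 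This is because $K$ itself is null and $K(r)$ is the convex hull minus the uncovered parts of the gaps. Differentiating, $\frac{d}{dr}|K(r)| = 2(1+\#\{i: g_i>2r\})$. We therefore want
\[
\#\{i : g_i>2r\} = N(r)-1 .
\]

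Next, the gaps are read off from $N$. Since $N$ is non-increasing, integer valued, right-continuous, and equal to $1$ for large $r$, the multiset of gaps is uniquely determined. Let the jump points of $N$ be $t_1>t_2>\dots>0$ (finitely or countably many, accumulating only at $0$). At each $t_k$ take $N(t_k^-)-N(t_k)$ gaps of length $2t_k$. Then $\#\{i:g_i>2r\}$ equals the number of gaps with $t_k>r$, which is $\sum_{t_k>r}(N(t_k^-)-N(t_k))$. Because $N(r)=1$ for large $r$, this sum is $N(r^{+})-1=N(r)-1$ by right-continuity. Local integrability of $N$ near $0$ gives
\[
\sum_i g_i = 2\sum_k t_k\,(N(t_k^-)-N(t_k)) = 2\int_0^\infty (N(s)-1)\,ds<\infty .
\]
The last equality is a layer-cake / Fubini computation.

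Then we place the points. Order the gaps as a sequence $(g_i)_{i\ge1}$ in any fashion, for instance non-increasing. Set $x_1=0$ and $x_{i+1}=x_i+g_i$. Let $K=\{x_i\}\cup\{L\}$, where $L=\sum_i g_i<\infty$ is the limit point (omit $L$ if there are finitely many gaps). Then $K$ is compact and countable. Its bounded complementary intervals are exactly $(x_i,x_{i+1})$, since the only accumulation point $L$ lies at the right end and creates no extra gap. So the formula from the first step applies, and $|K(r)|=2r+\sum_i\min(g_i,2r)$. Both this function and $f$ vanish as $r\to0$: here we use $\sum_i g_i<\infty$ and dominated convergence, and for $f$ the local integrability of $N$. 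Both are absolutely continuous with a.e. derivative $2N(r)$, so they agree for every $r>0$.

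The main obstacle is justifying the neighbourhood formula rigorously when there are infinitely many gaps. Concretely, one must check that $K(r)$ equals $[-r,L+r]$ minus the open middle pieces of the gaps longer than $2r$, which have total length $\sum_i (g_i-2r)^+$. Since only finitely many gaps exceed $2r$, this is a finite union and the identity is elementary. Once that is settled, $|K(r)| = L+2r-\sum_i(g_i-2r)^+$, which equals $2r+\sum_i\min(g_i,2r)$. This identity is also directly verifiable and can bypass the differentiation step.
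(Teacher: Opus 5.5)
Your proposal is correct and follows essentially the same route as the paper, which also places consecutive points with gaps $2r_j$ repeated $n_j-n_{j-1}$ times in non-increasing order (via $x_k=2\int_1^k R(t)\,dt$) and checks that $K(r)$ has $1+\#\{\text{gaps}>2r\}=N(r)$ components. Your explicit formula $|K(r)|=2r+\sum_i\min(g_i,2r)$, together with the checks $\sum_i g_i=2\int_0^\infty(N(s)-1)\,ds<\infty$ and $|K(r)|\to 0$ as $r\to 0$, spells out the boundedness and normalization steps that the paper leaves implicit.
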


\begin{proof}
    From the assumptions on $N$ we can find a decreasing  sequence of positive radii $r_j>0$ (we adopt the convention $r_0=\infty$) and an increasing sequence of natural numbers $n_j$ such that
    \begin{equation}\label{eq:definition_N}
    N(r)=n_j\qquad\text{if $r\in [r_{j+1},r_j) $.}
    \end{equation}
    In particular $n_0=1$. 
    We aim to construct $K$ by putting $(n_1-n_0)$-many points each at distance $2r_1$ from the previous one, followed by $(n_2-n_1)$-many points each at distance $2r_2$ from the previous one, and so on. 
    
    To formalize this, let $R$ be the ``inverse'' of $N$,  defined by
    \[
    R(t)=r_j\qquad\text{if $t\in [n_{j-1},n_j)$.}
    \]
    (The graph of $R$ is obtained by flipping the graph of $N$, including the filled-in vertical jumps, with respect to the bisector $\{t=r\}$). For $k\in\mathbb{N}_+$ define
    \[
    x_k:=2\int_1^k R(t)dt.
    \]
    We claim that $K:=\{x_k\}_{k\in \mathbb{N}_+}$ satisfies $|K(r)|=f(r)$. To this aim, it is sufficient to show that $K(r)$ has $N(r)$-many connected components. First, the distance between two consecutive points is
    \begin{equation}\label{eq:distance_consecutive}
    |x_{k+1}-x_k|=2\int_k^{k+1} R(t)dt=2r_j\qquad \text{for $k\in [n_{j-1},n_j)$.}
    \end{equation}
    Fix $r>0$; hence $r_{j+1}\le r<r_j$ for some $j$. Then all points up to and including those at distance $2r_j$ from the next one will belong to the same connected component  of $K(r)$; on the other hand, the points before those will each belong to their own connected component. Hence, using \eqref{eq:distance_consecutive} and \eqref{eq:definition_N}, the number of connected components of $K(r)$ is
    \[
    \#\{k\in \mathbb{N}_+:\, |x_{k+1}-x_k|>2r\}+1=\#\{k\in \mathbb{N}_+:\, k<n_j\}+1=n_j=N(r).\qedhere
    \]    
\end{proof}

\bibliographystyle{amsalpha}
\bibliography{ref}

\end{document}